\documentclass[12pt,a4paper]{amsart}
\usepackage{mathtools,amsmath,amsfonts,amssymb,amsthm,amsopn,array,MnSymbol,centernot,stmaryrd}
 
\usepackage{biblatex}

\addbibresource{refs.bib}

\usepackage{amsaddr}
\usepackage{etoolbox}
\patchcmd{\subsection}{-.5em}{.5em}{}{}

\addtolength\textwidth{1 in}
\addtolength\hoffset{-.5 in}

\newtheorem{definition}{Definition}[section]

\newtheorem{lemma}[definition]{Lemma}
\newtheorem{construction}[definition]{Construction}
\newtheorem{proposition}[definition]{Proposition}
\newtheorem{theorem}[definition]{Theorem}
\newtheorem{corollary}[definition]{Corollary}

\newtheorem*{claim*}{Claim}

\theoremstyle{remark}
\newtheorem{rmk}[definition]{Remark}
\theoremstyle{remark}

\theoremstyle{remark}

\theoremstyle{remark}

\theoremstyle{remark}

\theoremstyle{remark}

\theoremstyle{remark}

\theoremstyle{remark}

\theoremstyle{remark}

\theoremstyle{definition}

\newcommand{\N}{\mathbb{N}} 
\newcommand{\Q}{\mathbb{Q}}
\newcommand{\Z}{\mathbb{Z}}

\newcommand{\lex}{\operatorname{<_\mathrm{lex}}}
\newcommand{\LOfp}{\mathrm{LO}_{\mathrm{fp}}}
\newcommand{\dLOfpb}{\mathrm{dLO}_{\mathrm{fp}}^{\mathrm{b}}}
\newcommand{\ST}{\mathbf{ST}}
\newcommand{\ATST}{\mathbf{A}3\mathbf{ST}_\omega}
\newcommand{\ILO}{\mathbf{ILO}}
\newcommand{\AST}{\mathbf{AST}_\omega}
\newcommand{\dILO}{\mathbf{dILO}}
\newcommand{\dLO}{\mathbf{dLO}}
\newcommand{\lin}[2]{\mathrm{lin}(#1,#2)}
\newcommand{\LIN}[2]{\mathrm{LIN}(#1,#2)}
\newcommand{\hgt}[1]{\mathrm{ht}(#1)}
\newcommand{\rk}[1]{\mathrm{rk}(#1)}
\newcommand{\wid}{\mathrm{wd}}

\begin{document}

\author[Agrawal et al.]{Shashwat Agrawal, Amit Kuber \and Esha Gupta}
\address{Department of Mathematics and Statistics\\ Indian Institute of Technology, Kanpur\\Uttar Pradesh, India}

\email{shashuiitk@gmail.com, askuber@iitk.ac.in, esha219gupta@gmail.com}
\email{Corresponding author: askuber@iitk.ac.in, ORCID ID: 0000-0003-4812-1234}
\title{{Euclidean algorithm for a class of linear orders}}

\keywords{isomorphism problem, discrete linear order, signed trees, finitely presented linear order, Euclidean algorithm}
\subjclass[2020]{06A05}

\begin{abstract}
Borrowing inspiration from Marcone and Mont\'{a}lban's one-one correspondence between the class of signed trees and the equimorphism classes of indecomposable scattered linear orders, we find a subclass of signed trees which has an analogous correspondence with equimorphism classes of indecomposable finite rank discrete linear orders.

We also introduce the class of \emph{finitely presented linear orders}-- the smallest subclass of finite rank linear orders containing $\mathbf 1$, $\omega$ and $\omega^*$ and closed under finite sums and lexicographic products. For this class we develop a generalization of the Euclidean algorithm where the \emph{width} of a linear order plays the role of the Euclidean norm. Using this as a tool we classify the isomorphism classes of finitely presented linear orders in terms of an equivalence relation on their presentations using \emph{3-signed trees}.
\end{abstract}

\maketitle

\section{Introduction}
The classification of scattered linear orders up to isomorphisms is a very hard problem; however their classification up to equimorphisms is well-studied. Mont\'{a}lban \cite{montalban} introduced the notion of signed trees to study equimorphism classes of scattered linear orders. Together with Marcone he proved \cite[Lemma~2.8]{marcone} that the class of signed trees, $\ST$, is in one-one correspondence with the class of equimorphism classes of indecomposable linear orders, $\ILO$; this correspondence restricts to one between the class of finite signed trees, $\ST_\omega$, and the class of equimorphism classes of finite rank indecomposable linear orders, $\ILO_\omega$. 

Our interest to investigate finite rank linear orders, specially the finite rank discrete linear orders, stems from the study of chains in certain posets, known as hammocks, which were introduced by Brenner \cite{BRN} in the study of the representation theory of finite dimensional algebras. The simplest version of a hammock in the context of string algebras is a bounded discrete linear order \cite[\S~2.5]{SchTh}. Representation-theoretic literature talks about the dimension of a modular lattice \cite{prestbook}, which when restricted to linear orders is exactly its Hausdorff rank \cite{hausdorff}; nevertheless there does not seem to be any mention of the Hausdorff rank in the representation-theoretic literature.

With the goal of understanding discrete linear orders we obtain a one-one correspondence (Theorems \ref{t1} and \ref{dILOAST}) between a subclass $\AST$ of finite signed trees, consisting of \emph{alternating signed trees}, and the equimorphism classes of finite rank indecomposable discrete linear orders, $\dILO_\omega$. In the process we document and use a characterization of discrete linear orders (Proposition \ref{p2}) which we believe is known to experts, but whose proof could not be found in the literature.

Since equimorphism is a very coarse relation and can relate profoundly different linear orders, we focus on the isomorphism relation in the latter half of the paper by restricting our attention to a much smaller class consisting of \emph{finitely presented} linear orders, $\LOfp$--such orders can be written using finitely many sum and lexicographical product operations. Borrowing inspiration from signed trees, we introduce the class $3\ST_\omega$ of \emph{3-signed trees} so that there is a many-one correspondence between $3\ST_\omega$ and the isomorphism classes in $\LOfp$. Again this correspondence restricts (Theorem \ref{ATSTconverse}) to one between the class $\dLOfpb$ of bounded discrete finitely presented linear orders and a subclass $\ATST$ of $3\ST_\omega$ consisting of \emph{alternating 3-signed trees}. Sardar and the second author proved \cite{SK} that the class $\dLOfpb$ is precisely the class of hammock linear orders for domestic string algebras.

Later we introduce an equivalence relation on $3\ST_\omega$, which we call `L-equivalence', so that two L-equivalent 3-signed trees correspond to isomorphic linear orders. The main goal of this paper is to prove Corollary \ref{main} which states that two 3-signed trees are L-equivalent if and only if their corresponding linear orders are isomorphic. To this end we define the \emph{width} of a finitely presented linear order--this isomorphism-invariant plays the role of the Euclidean norm in a generalisation of Euclid's division lemma (Lemma \ref{euclid}). At the heart of the proof of the main result lies this generalisation of the Euclidean algorithm, where we use the above lemma successively to reduce the problem to the lower (Hausdorff) rank cases.

The paper is organized as follows. In \S\ref{flo} we recall some preliminaries needed for our purpose while \S\ref{slo} deals with scattered linear orders and signed trees. In \S\ref{sectAST}, we show the correspondence between $\dILO_\omega$ and $\AST$. After introducing the classes $\LOfp$ and $3\ST_\omega$ in \S\ref{fplo} we define the notion of L-equivalence in \S\ref{leq3st}. We study isomorphism classes of bounded discrete finitely presented linear orders in \S\ref{secBDFPLO}. The Euclid's division lemma is the highlight of \S\ref{edfplo} and we prove the main result in \S\ref{eallofp}.

\section*{Acknowledgements} The second author thanks Shantanu Sardar for preliminary discussions on signed trees. All authors thank Nupur Jain for careful reading of the first draft of the paper. The authors did not receive support from any organization for the submitted work.

\subsection*{Data Availability Statement}
Data sharing not applicable to this article as no datasets were generated or analysed during the current study.

\section{Fundamentals of linear orders}\label{flo}
In this section we recall standard facts about linear orders and set up notations.

Let $\mathbf n$ denote the linear order with $n$ elements for a non-negative integer $n$. Let $\Z$ and $\Q$ denote the sets of integers and rationals with usual orders. Let $\omega$ denote the first infinite ordinal. For any linear order $L$, the notation $L^*$ denotes the same underlying set with the reverse order. Set $\omega^{+1}=\omega^+:=\omega,\omega^{-1}=\omega^-:=\omega^*,\omega^0:=\mathbf1$.

There are two natural associative and non-commutative binary operations on linear orders, namely sum ($+$) and lexicographic product ($\times$). The sum operation can be extended to a family of linear orders indexed by a linear order.

\begin{rmk}\label{rightdistr}
For linear orders $L_1,L_2,L_3$ we have $$(L_1+L_2)\times L_3\cong(L_1\times L_3)+(L_2\times L_3);$$ the other distributive law fails.
\end{rmk}

Say that a linear order $L$ \emph{embeds} into another linear order $L'$ if $L$ is isomorphic to a subset of $L'$ with the induced order. Say that $L\preceq L'$ if there is an embedding of $L$ into $L'$. Say that $L$ and $L'$ are \emph{equimorphic}, denoted $L\sim L'$, if $L\preceq L'$ and $L'\preceq L$.

For linear orders $L,L'$, the former is said to be a \emph{prefix} of the latter if $L'=L+L_0$ for some linear order $L_0$. Similarly $L$ is said to be a \emph{suffix} of $L'$ if $L'=L_0+L$ for some linear order $L_0$. We say that a subset $S \subseteq L$ for a linear order is \emph{convex} if for all $a,b,c\in L$ whenever $a,b \in S$ and $a<c<b$ then we have $c\in S$.

Recall that a linear order $L$ is said to be \emph{indecomposable} if whenever $L\preceq A+B$, for some linear orders $A,B$, either $L\preceq A$ or $L\preceq B$.

\begin{rmk}\label{indecinvequim}
Indecomposability of linear orders is invariant under equimorphism.
\end{rmk}

We are interested in a subclass of indecomposable linear orders.
\begin{definition}
Say that a linear order $L$ is \emph{irreducible} if either $L=\mathbf 1$ or $L\cong\omega^\pm\times L'$ for some linear order $L'$.
\end{definition}

\begin{proposition}\label{indec}
An irreducible linear order is indecomposable.
\end{proposition}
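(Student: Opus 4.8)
The plan is to treat the two defining cases of irreducibility separately. If $L=\mathbf 1$, then any embedding of the single point of $L$ into a sum $A+B$ lands in either $A$ or $B$, so $L\preceq A$ or $L\preceq B$, as required. The substantial case is $L\cong\omega^\pm\times L'$, and I would first collapse the two signs into one by duality: since $A\preceq B$ iff $A^*\preceq B^*$ and $(A+B)^*\cong B^*+A^*$, a linear order is indecomposable exactly when its reverse is; as $(\omega^*\times L')^*\cong\omega\times(L')^*$ is again of the form $\omega\times(\,\cdot\,)$, it suffices to prove that $\omega\times L'$ is indecomposable.

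So suppose $f\colon\omega\times L'\hookrightarrow A+B$ is an embedding. Writing $\omega\times L'$ as the concatenation $\sum_{n\in\omega}B_n$ of consecutive blocks $B_n\cong L'$ (consistent with the convention recorded in Remark~\ref{rightdistr}), I would first observe that $f$ splits the domain into an initial and a final segment according to where points are sent. Precisely, set $I=f^{-1}(A)$ and $F=f^{-1}(B)$; because $f$ is order-preserving and $A$ is an initial segment of $A+B$, the set $I$ is downward closed and $F$ is upward closed, so $\omega\times L'=I+F$ with $I\preceq A$ and $F\preceq B$ via the restrictions of $f$. It therefore suffices to show that $\omega\times L'\preceq I$ or $\omega\times L'\preceq F$.

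The key step is a dichotomy for the block-index set $S=\{\,n\in\omega:I\cap B_n\neq\emptyset\,\}$. If $S$ is unbounded, then for every $m$ we may choose $n\in S$ with $n>m$; any point of $I\cap B_n$ lies strictly above all of $B_m$, and since $I$ is downward closed this forces $B_m\subseteq I$. As $m$ was arbitrary, $I=\omega\times L'$ and trivially $\omega\times L'\preceq I$. If instead $S$ is bounded, let $N=\max S$; then $F$ contains the blocks $B_{N+1},B_{N+2},\ldots$ entirely, and these form a convex suborder isomorphic to $\sum_{n\in\omega}L'=\omega\times L'$, whence $\omega\times L'\preceq F$. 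Combining this with the previous paragraph gives indecomposability of $\omega\times L'$, and the duality reduction then completes the proof.

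I expect the only genuine subtlety—the step I would be most careful about—to be the passage from an arbitrary embedding into $A+B$ to an honest initial/final decomposition $I+F$; once that is secured, the combinatorial dichotomy on $S$ is the heart of the argument, and it is precisely where the self-similarity $\omega\times L'\cong(\mathbf k\times L')+(\omega\times L')$ of the tail does the work.
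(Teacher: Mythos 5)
Your proof is correct and follows essentially the same route as the paper's: both arguments view $\omega\times L'$ as the block sum $\sum_{n\in\omega}L'$ and dichotomize on whether the initial summand captures cofinally many blocks (your unbounded $S$, giving $\omega\times L'\preceq A$) or only finitely many (your bounded $S$, where the self-similar tail $\sum_{n>N}L'\cong\omega\times L'$ embeds in $B$). You merely spell out details the paper leaves implicit—the duality reduction for $\omega^*$ and the initial/final decomposition $I+F$—and the only microscopic omission is the degenerate case $S=\emptyset$, where $F$ is everything and the conclusion is immediate.
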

\begin{proof}
Since $\mathbf{1}$ is clearly indecomposable, without loss we will prove that $\omega\times L$ is indecomposable; the proof of the remaining case is dual.

By definition $\omega\times L = \sum_{n\in\omega}{L}$. So suppose $\sum_{n\in\omega}{L} \ \preceq L_1 + L_2$. If $\sum_{n\in\omega}{L} \ \preceq L_1$, we are done. If not then $L_1$ embeds only finitely many, say $k$ many, copies of $L$ for some $k \in \N$. Then $\sum_{n>{k+1}, n\in\omega}{L} \cong \sum_{n\in\omega}{L} \ \preceq L_2$ so again we are done.
\end{proof}

\begin{definition}
Given a linear order $L$ and an ordinal $\alpha$, define an equivalence relation $\sim_\alpha$ on
$L$ by transfinite recursion as follows.

Let $\sim_0$ be the identity relation on $L$. For an ordinal $\alpha>0$ suppose we have defined relations $\sim_\beta$ for all $\beta<\alpha$. For $x< y$ in $L$, set $x \sim_\alpha y$ if for some $\beta < \alpha$, there are only finitely many $\sim_\beta$-equivalence classes intersecting the interval $[x,y]$. The $\sim_\alpha$-equivalence classes are convex subsets of $L$--such classes, ordered in the obvious way, constitute a linear order, denoted $L^{(\alpha)}$. The \emph{Hausdorff rank} (or, just \emph{rank}) of $L$, $\rk{L}$, is the least ordinal $\alpha$ such that $L^{(\alpha)}$ is finite. If no such $\alpha$ exists, set $\rk{L} := \infty$.
\end{definition}

\begin{rmk}\label{sumderivative}
Suppose $L=L_1+L_2$. Then $L_1^{(1)}$ (resp. $L_2^{(1)}$) is a prefix (resp. suffix) of $L^{(1)}$. Moreover, $L^{(1)}=L_1^{(1)}+L_2^{(1)}$ if and only if either $L_1$ is not bounded above or $L_2$ is not bounded below. When $L_1$ is bounded above and $L_2$ is bounded below we obtain $L^{(1)}$ by identifying the maximal element of $L_1^{(1)}$ with the minimal element of $L_2^{(1)}$ in $L_1^{(1)}+L_2^{(1)}$.
\end{rmk}

Using the above remark repeatedly we get the following.
\begin{proposition}\label{irredderiv}
If $L$ is an irreducible linear order isomorphic to $\omega^\delta\times L'$ for some $\delta\in\{+,-\}$ and some $L'$ with $\rk{L'}>0$ then $L^{(1)}$ is also isomorphic to $\omega^\delta\times L''$ for some $L''$.
\end{proposition}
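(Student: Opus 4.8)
The plan is to reduce to the case $\delta=+$ and to compute the finite condensation $L^{(1)}$ of $L\cong\sum_{n\in\omega}C_n$, where each $C_n$ is a copy of $L'$, by applying Remark~\ref{sumderivative} across the successive junctions between $C_n$ and $C_{n+1}$. First I would fix $\delta=+$, noting that $\delta=-$ follows by the order-reversing duality (reverse every order and swap $\omega$ with $\omega^*$). Writing $L=\sum_{n\in\omega}C_n$ with each $C_n\cong L'$, Remark~\ref{sumderivative} says that the derivative of $C_n+C_{n+1}$ identifies the maximum of $C_n^{(1)}$ with the minimum of $C_{n+1}^{(1)}$ precisely when $C_n$ is bounded above and $C_{n+1}$ is bounded below. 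Since all $C_n\cong L'$, whether this occurs is governed entirely by the boundedness of $L'$, so it happens either at every junction or at none. Before exploiting this I would have to pass from the two-summand statement of Remark~\ref{sumderivative} to the $\omega$-indexed sum; I expect to justify this by observing that each $\sim_1$-class of $L$ meets at most two consecutive blocks (an interval spanning three consecutive blocks contains an entire infinite copy of $L'$, hence is infinite), so the junctions may be treated independently.

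This splits the computation into two cases. If $L'$ is not bounded above or not bounded below, then no junction produces an identification, whence $L^{(1)}\cong\sum_{n\in\omega}C_n^{(1)}\cong\omega\times L'^{(1)}$ and I take $L''=L'^{(1)}$. If instead $L'$ is bounded both above and below, every junction identifies the maximum of $C_n^{(1)}$ with the minimum of $C_{n+1}^{(1)}$. This is where the hypothesis $\rk{L'}>0$ enters decisively: it forces $L'$ to be infinite, and an infinite order bounded on both sides cannot consist of a single $\sim_1$-class (that class would be the finite interval between its extrema), so $M:=L'^{(1)}$ has \emph{distinct} minimum $a$ and maximum $b$. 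Writing $M\cong\mathbf1+M_0+\mathbf1$ with $M_0=M\setminus\{a,b\}$ and gluing successive copies along $b=a$, the resulting order is $\mathbf1+M_0+\mathbf1+M_0+\cdots$; regrouping its summands as $(\mathbf1+M_0)+(\mathbf1+M_0)+\cdots$ gives $L^{(1)}\cong\omega\times(\mathbf1+M_0)$, so I take $L''=\mathbf1+M_0$, that is, $L'^{(1)}$ with its maximum deleted.

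The main obstacle is the bounded-on-both-sides case: here the infinitely many endpoint identifications must be tracked simultaneously, the associativity regrouping of the glued sum must be carried out carefully, and one must verify that the degenerate collapse is excluded. Indeed, without the assumption $\rk{L'}>0$ one could have $L'^{(1)}=\mathbf1$, in which case all the single-point classes glue into one and $L^{(1)}=\mathbf1$, which is \emph{not} of the form $\omega\times L''$; the argument above shows that $\rk{L'}>0$ is exactly what rules this out. The remaining case and the duality are routine by comparison.
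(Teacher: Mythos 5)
Your proof is correct, and it takes a genuinely different route from the paper's in how it handles the merging of $\sim_1$-classes at the junctions. The paper sidesteps your case analysis with a cyclic-shift trick: it takes $L_1$ to be the $\sim_1$-class of the minimum of $L'$ (empty if there is no minimum), writes $L'=L_1+L_2$, and uses $\omega\times L'\cong L_1+\omega\times(L_2+L_1)$. Because $L_1$ is an \emph{entire} $\sim_1$-class, the junction between $L_1$ and $L_2$ can never merge (a maximum of $L_1$ and a minimum of $L_2$ would be $\sim_1$-equivalent, forcing $\min L_2\in L_1$), so after the shift every junction of the $\omega$-sum is merge-free and the derivative distributes in one uniform computation: $L^{(1)}\cong L_1^{(1)}+\omega\times(L_2'+L_1^{(1)})\cong\omega\times(L_1^{(1)}+L_2')$, where $L_2'$ is $L_2^{(1)}$ with at most its top point removed. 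You instead split on whether $L'$ has both endpoints and, in the bounded case, compute the amalgamated sum explicitly and regroup; your outputs agree with the paper's ($L''=L'^{(1)}$ when nothing merges, $L'^{(1)}$ with its maximum deleted when everything merges). Your version is more explicit on two points the paper passes over as ``easy to see'': the fact that a $\sim_1$-class of the $\omega$-sum cannot meet three consecutive blocks (which is exactly what licenses applying Remark \ref{sumderivative} junction by junction, and which is equally needed for the paper's step $(\omega\times(L_2+L_1))^{(1)}\cong\omega\times(L_2+L_1)^{(1)}$), and the precise role of the hypothesis $\rk{L'}>0$, including the counterexample $L'$ finite, $L\cong\omega$, $L^{(1)}\cong\mathbf1$. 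What the paper's trick buys is brevity: no case split and no explicit infinite gluing argument.
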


\begin{proof}
Without loss suppose $L\cong\omega\times L'$. Let $L_1$ be the $\sim_1$-equivalence class of the minimal element of $L'$, if exists ($L_1 = \emptyset$ if minimal element doesn't exist), and let $L'=L_1+L_2$. This shows that if $L_2$ is non-empty then either $L_1$ is unbounded above or $L_2$ is unbounded below. Thus Remark \ref{sumderivative} gives that $(L_1 + L_2)^{(1)} = L_1^{(1)} + L_2^{(1)}$. The same remark also gives that $(L_2+L_1)^{(1)}=L'_2+L_1^{(1)}$, where $L'_2$ is prefix of $L_2^{(1)}$ that contains at most one point less than the latter. Since $\omega\times L'\cong L_1+\omega\times(L_2+L_1)$, it is easy to see that 
\begin{eqnarray*}
L^{(1)}&\cong&(\omega\times L')^{(1)}\\&\cong&L_1^{(1)}+(\omega\times(L_2+L_1))^{(1)}\\&\cong&L_1^{(1)}+\omega\times(L_2+L_1)^{(1)}\\&\cong&L_1^{(1)}+\omega\times(L'_2+L_1^{(1)})\\&\cong&\omega\times(L_1^{(1)}+L'_2).
\end{eqnarray*}
\end{proof}

Proposition \ref{irredderiv} together with induction gives the following.
\begin{corollary} \label{rk-1 derivative of irr}
If $L$ is an irreducible linear order with $1\leq\rk L<\omega$ and $L$ is isomorphic to $\omega^\delta\times L'$ for some $\delta\in\{+,-\}$ and some linear order $L'$ then $L^{(\rk L-1)}\cong\omega^\delta$.
\end{corollary}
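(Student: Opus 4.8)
The plan is to induct on $r := \rk{L}$, using Proposition \ref{irredderiv} as the engine that preserves the irreducible shape $\omega^\delta\times(-)$ under a single derivative. Throughout I will use the standard fact, immediate from the monotonicity $\sim_\beta\subseteq\sim_{\beta'}$ (for $\beta\leq\beta'$) built into the definition of $\sim_\alpha$, that at a successor index the derivative is computed by iterating the single derivative: since for $\alpha=k+1$ the condition ``finitely many $\sim_\beta$-classes meet $[x,y]$ for some $\beta<k+1$'' is witnessed by the coarsest choice $\beta=k$, one gets $L^{(k+1)}\cong(L^{(k)})^{(1)}$, and hence $\rk{L^{(1)}}=\rk{L}-1$ together with the additivity $(L^{(1)})^{(m)}\cong L^{(m+1)}$ for finite $m$.

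For the base case $r=1$ I want $L^{(0)}=L\cong\omega^\delta$. Write $L\cong\omega^\delta\times L'$; note $L$ is infinite since $\rk{L}=1>0$. If $L'$ were infinite then $\rk{L'}>0$, so Proposition \ref{irredderiv} would give $L^{(1)}\cong\omega^\delta\times L''$ with $L''$ nonempty (as $L$, hence $L^{(1)}$, is nonempty); but $\omega^\delta\times L''$ is infinite for nonempty $L''$, contradicting $\rk{L}=1$. Hence $L'$ is finite and nonempty, say $L'=\mathbf n$ with $n\geq1$, and then $\omega^\delta\times\mathbf n\cong\omega^\delta$ because $\sum_{i<\omega}\mathbf n$ telescopes to order type $\omega$ (dually for $\delta=-$). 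This settles $r=1$.

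For the inductive step suppose $r\geq2$ and the statement holds at rank $r-1$. First I check the inner factor has positive rank: if $\rk{L'}=0$ then $L'$ is finite and the collapse above forces $\rk{L}=1$, contradicting $r\geq2$, so $\rk{L'}>0$. Proposition \ref{irredderiv} then gives $L^{(1)}\cong\omega^\delta\times L''$, which is again irreducible, and by the rank bookkeeping above it has $\rk{L^{(1)}}=r-1$. Applying the induction hypothesis to $L^{(1)}$ yields $(L^{(1)})^{(r-2)}\cong\omega^\delta$, and since additivity gives $(L^{(1)})^{(r-2)}\cong L^{(r-1)}$ we conclude $L^{(r-1)}\cong\omega^\delta$, as required.

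The subtleties here are bookkeeping rather than conceptual: one must ensure the inner factor keeps positive rank so that Proposition \ref{irredderiv} can be reapplied (handled uniformly by the observation that a finite inner factor collapses the whole order to $\omega^\delta$ of rank $1$), and one must justify that iterating the single derivative computes $L^{(k)}$ and drops the rank by exactly one at each finite stage. I expect this rank-additivity of the derivative to be the main point to pin down carefully, though it follows directly from the monotonicity of the hierarchy $(\sim_\beta)_\beta$ in the defining relation of $\sim_\alpha$.
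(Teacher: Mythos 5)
Your proof is correct and follows exactly the route the paper intends: the paper's entire proof is the single line ``Proposition \ref{irredderiv} together with induction,'' and your argument is precisely that induction on $\rk L$, with the base case handled by the collapse $\omega^\delta\times\mathbf n\cong\omega^\delta$ and the inductive step by applying Proposition \ref{irredderiv} (which preserves $\delta$) together with the standard facts $L^{(k+1)}\cong(L^{(k)})^{(1)}$ and $\rk{L^{(1)}}=\rk L-1$. The bookkeeping you spell out (positivity of $\rk{L'}$, rank additivity of the derivative) is exactly what the paper leaves implicit, and you have pinned it down correctly.
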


\section{Scattered Linear Orders}\label{slo}

Recall that a linear order $L$ is \emph{scattered} if $\mathbb{Q}\npreceq L$. There is a characterization of scattered linear orders in terms of their Hausdorff ranks due to Hausdorff.
\begin{lemma} \cite{hausdorff}
A linear order $L$ is scattered if and only if $\rk{L}\neq\infty$.
\end{lemma}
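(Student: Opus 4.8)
The plan is to prove the logically equivalent statement that $\Q \preceq L$ if and only if $\rk{L} = \infty$; the lemma is then obtained by negating both sides. Both implications turn on tracking how an embedded copy of $\Q$, or its absence, interacts with the derivatives $L^{(\alpha)}$, and I would establish each by contraposition.

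For the implication $\Q \preceq L \Rightarrow \rk{L} = \infty$, I would fix an embedded copy of $\Q$ in $L$ and prove by transfinite induction on $\alpha$ that no two distinct rationals are $\sim_\alpha$-related. The base case $\alpha = 0$ is immediate since $\sim_0$ is the identity. For $\alpha > 0$, if two rationals $q_1 < q_2$ were $\sim_\alpha$-equivalent, then by definition some $\beta < \alpha$ would make only finitely many $\sim_\beta$-classes meet $[q_1, q_2]$; but density of $\Q$ supplies infinitely many rationals in $(q_1, q_2)$, and the pigeonhole principle forces two of them into a common $\sim_\beta$-class, contradicting the inductive hypothesis at $\beta$. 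Since distinct rationals therefore land in distinct $\sim_\alpha$-classes, and these classes are convex (hence correctly ordered), we get $\Q \preceq L^{(\alpha)}$ for every $\alpha$, so $L^{(\alpha)}$ is never finite and $\rk{L} = \infty$.

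For the converse $\rk{L} = \infty \Rightarrow \Q \preceq L$, I would first observe that the relations $\sim_\alpha$ are increasing in $\alpha$, so by a cardinality argument they stabilize: there is a least $\alpha_0$ with $\sim_{\alpha_0} = \sim_{\alpha_0 + 1} =: \sim_\infty$. Writing $M := L^{(\alpha_0)}$, the hypothesis $\rk{L} = \infty$ forces $M$ to be infinite, since for $\alpha \geq \alpha_0$ we have $L^{(\alpha)} = M$, while for $\alpha < \alpha_0$ the coarsening $\sim_\alpha \subseteq \sim_{\alpha_0}$ makes $L^{(\alpha)}$ map onto $M$. The crucial point is that stability makes $\sim_1$ on $M$ trivial: unwinding the definition, $[x] \sim_1 [y]$ in $M$ would say that $[x,y]$ meets only finitely many $\sim_{\alpha_0}$-classes, i.e. $x \sim_{\alpha_0+1} y$, i.e. $x \sim_\infty y$, i.e. $[x] = [y]$. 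Hence every interval of $M$ between two distinct points is infinite, so $M$ is densely ordered. A standard back-and-forth construction then embeds $\Q$ into $M$, and choosing one representative per class lifts this to an embedding witnessing $\Q \preceq L$, with convexity of the classes guaranteeing that order is preserved.

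The genuinely substantive step is this converse direction, and within it the bookkeeping that identifies $\sim_1$ on the stabilized quotient $M$ with $\sim_{\alpha_0+1}$ on $L$, so that density of $M$ is forced. Once density is in hand, the back-and-forth embedding of $\Q$ into a dense order and the lifting through convex classes are routine. The forward direction, by contrast, reduces entirely to the density-plus-pigeonhole induction and presents no real obstacle.
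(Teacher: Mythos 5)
Your proof is correct. Note, however, that the paper itself offers no proof of this statement: it is quoted as Hausdorff's classical theorem with only a citation, so there is no internal argument to compare against. What you have written is essentially the standard textbook proof of that theorem, and both directions hold up under scrutiny: the forward direction's transfinite induction (density of $\Q$ plus pigeonhole showing distinct rationals stay in distinct $\sim_\alpha$-classes) is sound, and the converse's key bookkeeping step --- that on the stabilized quotient $M=L^{(\alpha_0)}$ the relation $\sim_1$ coincides, via convexity of classes, with $\sim_{\alpha_0+1}=\sim_{\alpha_0}$ on $L$, forcing $M$ to be dense --- is exactly the right observation, after which embedding $\Q$ into a dense infinite order and lifting through representatives of convex classes is routine. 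The only points you leave implicit, both harmless, are that $\sim_{\alpha_0}=\sim_{\alpha_0+1}$ propagates to all larger ordinals (a one-line transfinite induction) and that the ``forth'' construction needs two points of $M$ as buffers, which the infinitude of $M$ supplies.
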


The classes of scattered linear orders and finite rank linear orders are closed under finite sums and finite products.

Let $\ILO$ denote the class of countable indecomposable scattered linear orders, $\dLO$ denote the class of countable discrete linear orders, and $\dILO:=\dLO\cap\ILO$. We add a subscript $\omega$ to any class of linear orders to denote its subclass of orders with finite rank.

In a seminal work Laver \cite{laver} settled in the affirmative the conjectures of Fra\"{i}ss\'{e} \cite{fraisse} stating that indecomposables are the building blocks of the class of scattered linear orders up to equimorphism.

\begin{theorem}\label{t2}\cite{laver} Every scattered linear order can be written as a finite sum of indecomposable ones. 

Every indecomposable linear order can be written either as an $\omega$-sum or as an $\omega^*$-sum of indecomposable linear orders of smaller rank.
\end{theorem}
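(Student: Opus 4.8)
The plan is to prove both statements simultaneously by transfinite induction on the Hausdorff rank $\rk{L}$, since the indecomposable summands produced in the first statement are exactly the pieces fed into the recursion of the second, and conversely. The base case $\rk{L}=0$ is immediate: such an $L$ is finite, hence a finite sum of copies of $\mathbf 1$, and $\mathbf 1$ is indecomposable. It is also convenient to dispose of rank $1$ by hand: a single $\sim_1$-class is, by definition, a convex suborder all of whose intervals are finite, so it is isomorphic to one of $\mathbf n$, $\omega$, $\omega^*$ or $\Z\cong\omega^*+\omega$, each of which is a finite sum of the indecomposables $\mathbf 1,\omega,\omega^*$.

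For the first statement at rank $\alpha$, I would pass to the condensation $L^{(1)}$, whose rank is strictly smaller than $\alpha$ when $\alpha$ is a successor. By the inductive hypothesis write $L^{(1)}=M_1+\dots+M_k$ with each $M_i$ indecomposable, and then pull this splitting back along the quotient map $L\to L^{(1)}$: grouping the $\sim_1$-classes according to the blocks $M_i$ expresses $L=N_1+\dots+N_k$ with $N_i^{(1)}\cong M_i$, up to the boundary identifications described in Remark \ref{sumderivative}. It remains to see that each $N_i$ is itself a finite sum of indecomposables. When $M_i=\mathbf 1$ the block $N_i$ is a single $\sim_1$-class, already handled; when $\rk{M_i}\geq 1$ one checks that indecomposability lifts through the condensation, where Remark \ref{sumderivative} and Proposition \ref{irredderiv} are the tools, the latter showing that the $\omega^{\pm}$-structure responsible for indecomposability survives the derivative. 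Flattening gives the desired finite decomposition of $L$.

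For the second statement the crux is a dichotomy: \emph{an indecomposable $L$ is either right-indecomposable or left-indecomposable}, meaning it embeds into each of its nonempty proper final (resp.\ initial) segments. If $L$ were neither, one could find a final segment $B$ and an initial segment $A$ into neither of which $L$ embeds; writing $L=A'+B$ and $L=A+B'$ and applying the definition of indecomposability to the embedding $L\preceq A+B$ yields a contradiction after a short argument. Granting the dichotomy, assume without loss that $L$ is right-indecomposable with $\rk{L}=\alpha\geq 1$. Then $L$ has no maximum and I can choose an increasing cofinal sequence of cut points $x_0<x_1<\cdots$ so that $L\cong\sum_{n\in\omega}L_n$ with each block $L_n$ of rank strictly less than $\alpha$, the cuts being located using the condensations $L^{(\beta)}$ for $\beta<\alpha$. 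By the first statement each $L_n$ is a finite sum of indecomposables of rank $<\alpha$, and flattening the $\omega$-sum of these finite sums exhibits $L$ as an $\omega$-sum of indecomposables of smaller rank; the left-indecomposable case is dual and gives an $\omega^*$-sum.

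The main obstacle is twofold. First, establishing the right/left dichotomy cleanly from the embedding-based definition of indecomposability is the delicate combinatorial step, as is transferring indecomposability across the condensation in the first statement. Second, the reductions via $L^{(1)}$ behave well only at successor ranks: at a limit ordinal $\alpha$ the first condensation need not lower the rank, so both inductions must instead be driven by a cofinal sequence $\beta_n\uparrow\alpha$, choosing the cut points so that the $n$-th block collapses under $\sim_{\beta_n}$ and therefore has rank below $\alpha$. This transfinite bookkeeping is routine but must be set up with care; for the finite-rank orders that are the focus of this paper only the successor case intervenes.
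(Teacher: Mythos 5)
The paper gives no proof of this statement at all: it is quoted from Laver's resolution of Fra\"{i}ss\'{e}'s conjecture, and the proofs in the literature (even for the finite-rank case treated in the cited Marcone--Mont\'{a}lban paper) run through Nash--Williams' theory of (better-)quasi-orders. Your proposal attempts a self-contained induction on rank, and it breaks exactly at the points where that machinery is normally invoked.

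For the first statement, the step ``indecomposability lifts through the condensation'' is false. Take $N=\omega^*+\omega\times\omega$ (in the paper's notation $\omega\times\omega=\sum_{n\in\omega}\omega$). Then the first $\sim_1$-class of $N$ is $\omega^*+\omega$, so $N^{(1)}\cong\omega$ is indecomposable; but $N$ itself is not, since $N\npreceq\omega^*$ and $N\npreceq\omega\times\omega$ (the latter is a well-order, so $\omega^*\npreceq\omega\times\omega$). Remark \ref{sumderivative} and Proposition \ref{irredderiv} cannot repair this: both pass information \emph{from} $L$ \emph{to} $L^{(1)}$, never upward. The weaker claim your induction actually needs --- that each pulled-back block $N_i$ is a finite sum of indecomposables --- is not available either, because $\rk{N_i}$ can equal $\rk{L}$; indeed, whenever $L^{(1)}$ is itself indecomposable your pull-back is the trivial decomposition $k=1$, $N_1=L$, so the inductive hypothesis says nothing and you are assuming the statement being proved, for an order of the very same rank. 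This circularity is not incidental: after Hausdorff's theorem and the induction hypothesis, what remains is precisely to show that an $\omega$-sum (or $\omega^*$-sum) of indecomposables of smaller rank is a finite sum of indecomposables, and the known argument for that finds a tail $\sum_{j\geq N}I_j$ equimorphic to all of its own tails (such a tail is then indecomposable) --- the existence of $N$ resting on the well-foundedness of embeddability on scattered orders, which is Laver's theorem itself.

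For the second statement, the left/right dichotomy is asserted with only ``a short argument yields a contradiction,'' and the argument you gesture at does not close. With $A$ an initial and $B$ a final segment satisfying $L\npreceq A$, $L\npreceq B$, the case $A\cup B=L$ does give a quick contradiction, but in the crossing case ($A\subsetneq L\setminus B$) applying the definition of indecomposability twice yields only $L\preceq M$, where $M$ is the middle segment strictly between $A$ and $B$ --- and this is not a contradiction; it is exactly the configuration that then has to be analyzed. A correct direct proof (it can be done for successor rank) needs genuinely more: the maximal initial segment not embedding $L$, a lemma that bounded initial segments of a right-indecomposable order of successor rank have strictly smaller rank, and an argument that copies of such an order inside an equimorphic order are necessarily cofinal; in the literature the dichotomy is instead extracted from the structure theory (equimorphism with $h$-indecomposable orders), i.e.\ from the theorem you are trying to prove. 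Your subsequent construction also silently uses two further facts --- countability of $L$ (to get an $\omega$-indexed cofinal sequence) and the rank-drop for the blocks $[x_n,x_{n+1})$ --- of which only the second is fixable by the lemma just mentioned, and only after right-indecomposability has been secured. So both halves of the proposal have missing ideas at their cores, and they are the same missing idea: some form of the wqo-theoretic input that Laver's proof supplies.
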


An important tool in the study of scattered linear orders is the notion of a signed tree introduced by Montalb\'{a}n. 
\begin{definition} \cite[Definition~2.1]{montalban}
Let $\omega^{<\omega}$ denote the set of all finite sequences in $\omega$. A signed tree is a pair $(T,s_T)$ where $T \subseteq \omega^{<\omega}$ is a non-empty well-founded tree and $s_T : T \to \{+,-\}$ is a map.
\end{definition}

The class of signed trees (resp. finite signed trees) is denoted by $\ST$ (resp. $\ST_\omega$). To talk about signed trees we use relevant notations from \cite{marcone} and \cite{montalban}. The significance of signed trees is captured by the next result.
\begin{proposition}\cite[Lemma~2.8]{marcone}\label{map}
There is a map $\mathrm{lin} : \ST \to \ILO$ such that for each infinite $L \in \ILO$, there exists $(T,s_T) \in \ST$ such that $\lin{T}{s_T} \sim L$.
\end{proposition}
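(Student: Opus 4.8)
The plan is to construct the map $\mathrm{lin}$ explicitly by recursion on the well-founded structure of the tree, and then to establish the surjectivity-up-to-equimorphism statement by induction on Hausdorff rank using Laver's structure theorem (Theorem \ref{t2}).

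First I would define $\mathrm{lin}(T,s_T)$ by recursion on the well-founded tree $T$. For a single-node tree (a leaf) I set $\mathrm{lin}(T,s_T)=\mathbf 1$, irrespective of its sign. For a tree whose root $r$ carries sign $\delta=s_T(r)\in\{+,-\}$ and whose immediate successors generate subtrees $T_0,T_1,\dots$ (indexed by the successor set $\subseteq\omega$), I let $L_i:=\mathrm{lin}(T_i,s_{T_i})$ and define $\mathrm{lin}(T,s_T)$ to be an $\omega$-sum (when $\delta=+$) or an $\omega^*$-sum (when $\delta=-$) in which the orders $L_i$ are repeated cofinally, so that every proper final (resp.\ initial) segment is equimorphic to the whole. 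Since $T$ is well-founded, each $T_i$ has strictly smaller rank and the recursion terminates. I then verify that the output always lies in $\ILO$: scatteredness and countability are preserved because the scattered countable orders are closed under the sums involved, while indecomposability follows from the cofinal arrangement, which forces every tail to be equimorphic to the whole---the criterion underlying Proposition \ref{indec}.

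For the surjectivity statement I would argue by induction on $\rk L$ that every infinite $L\in\ILO$ is equimorphic to some $\mathrm{lin}(T,s_T)$. In the base case an infinite indecomposable order of rank $1$ is equimorphic to $\omega$ or to $\omega^*$, each realised by a root of the appropriate sign carrying a single leaf child. For the inductive step, Theorem \ref{t2} lets me write $L\sim\sum_{n\in\omega}L_n$ (or its $\omega^*$-analogue) with each $L_n$ indecomposable of strictly smaller rank; replacing each finite summand by $\mathbf 1$ and applying the inductive hypothesis to the infinite ones yields signed trees $T_n$ with $L_n\sim\mathrm{lin}(T_n,s_{T_n})$. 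Grafting the $T_n$ as the subtrees beneath a new root of sign $+$ (resp.\ $-$) produces a signed tree $T$ with $\mathrm{lin}(T,s_T)\sim L$.

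The crux---and the step I expect to be the main obstacle---is the equimorphism bookkeeping in the inductive step. I must know that replacing each summand $L_n$ by an equimorphic $\mathrm{lin}(T_n,s_{T_n})$ leaves the equimorphism type of the whole $\omega$-sum unchanged, and that the assembled sum is genuinely indecomposable and equimorphic to $L$. This calls for a lemma asserting that $\omega$-sums respect equimorphism of the summands, combined with a cofinality argument controlling how the (highly non-unique) Laver decomposition interacts with the witnessing embeddings. Remark \ref{indecinvequim} supplies the invariance of indecomposability under equimorphism, but the compatibility of infinite sums with equimorphism---with due attention to which summands are bounded above or below---is the technical heart of the argument and must be handled carefully.
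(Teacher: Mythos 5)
Note first that the paper itself gives no proof of this proposition: it is quoted verbatim from \cite[Lemma~2.8]{marcone}, and the map $\mathrm{lin}$ is the specific map constructed there. So the benchmark is the cited construction, which the paper relies on later. Your reconstruction follows essentially that same route: define $\mathrm{lin}$ by recursion on the well-founded tree with the children's orders repeated cofinally (this is exactly the shape of $\mathrm{lin}$ the paper uses in the proof of Proposition \ref{linLIN}, where $\lin{T}{s_T}=\sum_{k\in\omega}\bigl(\sum_{n\leq\min\{k,w(T;\emptyset)\}}\lin{T_n}{s_{T_n}}\bigr)$), deduce indecomposability from the cofinal arrangement, and prove surjectivity up to equimorphism by induction on rank via Laver's Theorem \ref{t2}. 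One genuine deviation: you send a one-node tree to $\mathbf 1$ ``irrespective of its sign,'' whereas the actual map sends it to $\omega^{s_T(\emptyset)}$ --- that is precisely why the base case of Proposition \ref{linLIN} is ``immediate.'' Your variant still satisfies the literal existential statement (so this does not invalidate your proof of the statement as written), but it is not the cited map: under your convention leaf signs carry no information and Proposition \ref{linLIN}'s base case would be false, so within this paper the convention matters.

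The step you flag as the crux is indeed the only real gap, and you should be aware that it closes with a short argument rather than heavy bookkeeping. (i) Equimorphism passes to $\omega$-sums summand-wise simply because a sum of embeddings is an embedding; your worry about summands being bounded above or below is unnecessary here --- boundedness matters for isomorphisms and derivatives (Remark \ref{sumderivative}), not for embeddings. (ii) For the direction $\lin{T}{s_T}\preceq L$: Hausdorff rank is monotone under embeddings, and $\rk{L_0+\cdots+L_k}\leq\max_{n\leq k}\rk{L_n}<\rk{L}$, so $L\npreceq L_0+\cdots+L_k$; applying indecomposability of $L$ to $L\preceq(L_0+\cdots+L_k)+\sum_{n>k}L_n$ then forces $L\preceq\sum_{n>k}L_n$ for every $k$. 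Thus every tail of the Laver decomposition contains a copy of $L$, hence of every finite partial sum $L_0+\cdots+L_j$. Now embed the blocks of $\lin{T}{s_T}$ one at a time: each block is a finite sum of the $\lin{T_n}{s_{T_n}}\preceq L_n$, so it embeds into a copy of a finite partial sum sitting inside a tail lying strictly to the right of where the previous block landed. The converse direction $L\sim\sum_n L_n\preceq\lin{T}{s_T}$ is the easy one (send $L_n$ into the copy of $\lin{T_n}{s_{T_n}}$ in the $n$-th block). With (i) and (ii) supplied, and finite summands of the decomposition handled as you indicate, your induction is complete and agrees in substance with the cited proof.
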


Combining the above with \cite[Lemma~2.6]{marcone} we get a stronger result for finite rank indecomposables.
\begin{proposition}\label{ILO}
For each infinite $L\in\ILO$ with $\rk{L}<\omega$, there exists $(T,s_T) \in \ST_\omega$ such that $\lin{T}{s_T} \sim L$. 
\end{proposition}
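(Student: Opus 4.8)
The plan is to show that no new tree has to be constructed: the signed tree already supplied by Proposition~\ref{map} will itself turn out to lie in $\ST_\omega$ once we use the rank hypothesis. First I would apply Proposition~\ref{map} to the given infinite $L\in\ILO$ to fix a signed tree $(T,s_T)\in\ST$ with $\lin{T}{s_T}\sim L$. The only input beyond this is the classical fact that Hausdorff rank is monotone under embeddings, i.e. $A\preceq B\Rightarrow\rk A\le\rk B$; applying it in both directions across the equimorphism $\lin{T}{s_T}\sim L$ yields $\rk{\lin{T}{s_T}}=\rk L<\omega$.

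Next I would feed this into the height--rank correspondence of \cite[Lemma~2.6]{marcone}, which identifies $\rk{\lin{T}{s_T}}$ with the height $\hgt{T}$ of the coding tree. Combining the two facts gives $\hgt{T}=\rk L<\omega$, so $(T,s_T)$ is a signed tree of finite height, hence $(T,s_T)\in\ST_\omega$, and the same tree witnesses the claim. This is precisely the ``combination'' advertised before the statement: Proposition~\ref{map} produces a witness, rank-invariance transports the finiteness hypothesis onto $\lin{T}{s_T}$, and \cite[Lemma~2.6]{marcone} converts finiteness of rank into finiteness of the tree.

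I expect the only delicate point to be the careful matching of conventions -- ensuring the rank--height statement borrowed from \cite{marcone} is applied to the exact tree returned by Proposition~\ref{map}, and that ``finite'' in $\ST_\omega$ is read compatibly with that lemma. Should the intended reading instead demand genuinely finitely many vertices rather than merely finite height, the substantive extra work is a pruning step, and this is where the real obstacle would lie: by induction on $\hgt{T}$ one must collapse the (possibly infinite) family of immediate subtrees at each vertex down to a finite subfamily. Here I would invoke Laver's decomposition (Theorem~\ref{t2}) to write the order assembled at a vertex as an $\omega^{\pm}$-sum of strictly smaller rank indecomposables, each finite by the inductive hypothesis, and argue that this infinite branching is redundant up to equimorphism so that finitely many branches already reproduce the equimorphism class. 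Establishing that redundancy -- that an infinite $\omega^{\pm}$-sum over varying subtree types is equimorphic to one over finitely many -- is the combinatorial heart of that alternative route.
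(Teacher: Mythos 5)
Your proposal takes essentially the same route as the paper: the paper's entire proof is the one-line remark that Proposition~\ref{map} combined with \cite[Lemma~2.6]{marcone} yields the result, and your argument is a fleshed-out version of exactly that combination (equimorphism-invariance of Hausdorff rank plus the rank--height correspondence from the cited lemma). The subtlety you flag---that a well-founded tree of finite height can still branch infinitely, so finite height alone does not place the tree in $\ST_\omega$---is genuine, but the paper delegates precisely this point to \cite[Lemma~2.6]{marcone} as well, so your elaboration (including identifying where a pruning argument would be needed) is faithful to, and no less complete than, the paper's own proof.
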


From Theorem \ref{t2}, we know that an indecomposable linear order $L$ is either an $\omega$-sum or an $\omega^*$-sum. In fact, the next result states that the sign of the root of any rooted tree presenting $L$ in the sense of Proposition \ref{map} is determined--this should be known to the experts but we could not find a reference.

\begin{proposition}
Suppose $L\in \ILO$ is infinite. Then for any $(T,s_T), (T',s_{T'}) \in \ST$ with $L \sim \lin{T}{s_T}\sim \lin{T'}{s_{T'}}$ we have $s_T(\emptyset)=s_{T'}(\emptyset)$.
\end{proposition}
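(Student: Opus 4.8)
The plan is to attach to each infinite $L\in\ILO$ a single equimorphism invariant that records the sign of the root, namely whether appending a top point changes the equimorphism type. Concretely, set $P(L):\equiv(L+\mathbf 1\sim L)$. First I would record that $P$ is invariant under equimorphism: if $L\sim L'$ then $L+\mathbf 1\sim L'+\mathbf 1$ (the sum respects $\preceq$ in each coordinate), whence $L+\mathbf 1\sim L$ if and only if $L'+\mathbf 1\sim L'$. Thus it suffices to show that $P(\lin{T}{s_T})$ is determined by $s_T(\emptyset)$, and in fact that $P(\lin{T}{s_T})$ holds precisely when $s_T(\emptyset)=-$.

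Next I would unwind the definition of $\mathrm{lin}$ to note that $M:=\lin{T}{s_T}$ is, by construction and consistently with Theorem \ref{t2}, an $\omega^{s_T(\emptyset)}$-indexed sum $\sum_n A_n$ of indecomposable linear orders each of Hausdorff rank strictly smaller than $\rk{M}$. The heart of the argument is then the following asymmetry. If $s_T(\emptyset)=+$, so $M=\sum_{n<\omega}A_n$, then I claim $M+\mathbf 1\not\sim M$: indeed every proper initial segment $\{x<q\}$ of $M$ is a finite sum $A_0+\cdots+A_{k-1}+(A_k\cap\{x<q\})$ of orders of rank $<\rk{M}$, hence (using that the rank of a finite sum is the maximum of the ranks, via Remark \ref{sumderivative}) has rank $<\rk{M}$; since rank is monotone under $\preceq$, no proper initial segment of $M$ can contain a copy of $M$, and therefore $M+\mathbf 1\not\preceq M$. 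If instead $s_T(\emptyset)=-$, so $M=\sum_{n\in\omega^*}B_n=\cdots+B_1+B_0$, then I claim $M+\mathbf 1\sim M$: writing $M=(\sum_{n\ge 1}B_n)+B_0$ and using that $M\in\ILO$ is indecomposable together with $\rk{B_0}<\rk{M}$, the definition of indecomposability forces $M\preceq\sum_{n\ge 1}B_n$; appending a top point and sending it to any point of the top block $B_0$ then yields $M+\mathbf 1\preceq(\sum_{n\ge 1}B_n)+\mathbf 1\preceq M$, so $M+\mathbf 1\sim M$.

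Finally I would assemble the pieces. Given $(T,s_T),(T',s_{T'})$ with $L\sim\lin{T}{s_T}\sim\lin{T'}{s_{T'}}$, the two displayed claims identify $P(\lin{T}{s_T})$ with the condition $s_T(\emptyset)=-$, and likewise for $T'$; since $P$ is an equimorphism invariant and the three orders are equimorphic, $P$ takes the same value on $\lin{T}{s_T}$ and $\lin{T'}{s_{T'}}$, and hence $s_T(\emptyset)=s_{T'}(\emptyset)$.

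I would expect the main obstacle to be the rank bookkeeping in the $s_T(\emptyset)=+$ case—specifically verifying cleanly that every proper initial segment has rank strictly below $\rk{M}$ and that rank is monotone under embeddings—together with making precise the correspondence between the root sign $\pm$ and the $\omega^{\pm}$-sum structure from the definition of $\mathrm{lin}$, which is exactly where the hypothesis that $L$ is infinite is essential (a single point carries no canonical root sign).
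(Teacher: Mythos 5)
Your proof is correct in substance, but it takes a genuinely different route from the paper's. The paper writes $L$ (via Theorem \ref{t2}) as an $\omega$-sum of indecomposables of strictly smaller rank, assumes $s_T(\emptyset)=-$, and then invokes the dual of Mont\'{a}lban's Lemma~2.11 --- an order that is h-indecomposable to the appropriate side and embeds into a well-ordered sum must embed into a single summand --- to force $L\preceq L_n$, contradicting the rank drop. You instead isolate an explicit equimorphism invariant, $P(L):\equiv(L+\mathbf 1\sim L)$, and show it reads off the root sign; this avoids the h-indecomposability machinery entirely and yields a transparent, reusable invariant. What it costs is exactly the rank bookkeeping you flag at the end, and that is where you should be careful, since the proposition concerns all of $\ST$ (arbitrary countable ordinal ranks), not just $\ST_\omega$. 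You need: (i) monotonicity of $\rk{\cdot}$ under embeddings (which the paper also uses implicitly in its contradiction step); (ii) $\rk{L_1+\cdots+L_k}\le\max_i\rk{L_i}$, provable by transfinite induction from the mechanism behind Remark \ref{sumderivative}; and (iii) that the blocks in the $\mathrm{lin}$-decomposition have rank strictly below $\rk{\lin{T}{s_T}}$. Fact (iii) is the crux of your "by construction" assertion: since each child subtree occurs infinitely often in the $\mathrm{lin}$-sum, it reduces via (i) to $\rk{\omega\times A}\ge\rk{A}+1$ for nonempty scattered $A$, which is standard but needs a separate argument at limit ranks (taking one derivative does not lower an infinite rank), or can simply be cited from the rank computation for $\mathrm{lin}$ in \cite{marcone} (their Lemma~2.6, which the paper itself quotes). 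Granting (i)--(iii), both halves of your dichotomy are sound: no proper initial segment of an $\omega$-sum of small-rank blocks can contain a copy of the whole, so a top point cannot be absorbed; while indecomposability pushes an $\omega^*$-sum into its proper prefix, so a top point can be absorbed into the last block. The assembly via invariance of $P$ is then clean, so with those supporting facts made precise your argument is a complete and self-contained alternative to the paper's.
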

\begin{proof}
In view of Theorem \ref{t2}, without loss, $L$ could be written as an $\omega$-sum, say $L = \sum_{n\in\omega}{L_n}$, for indecomposables $L_n$ with $\rk{L_n}< \rk{L}$ for each $n\in\omega$. Suppose $L \sim \lin{T}{s_T}$ for some $(T,s_T) \in \ST$ with $s_T(\emptyset) = -$. Then using the language of \cite{montalban}, $\lin{T}{s_T}$ is $h$-indecomposable to the right. Since $\omega$ is a well-order and $\lin{T}{s_T} \preceq L = \sum_{n\in\omega}{L_n}$ the dual of \cite[Lemma~2.11]{montalban} gives that $L\sim\lin{T}{s_T} \preceq L_n$ for some $n\in\omega$--a contradiction to $\rk{L_n}<\rk{L}$. Therefore $s_T(\emptyset)=+$.
\end{proof}

\section{Discrete linear orders with finite rank}\label{sectAST}
Henceforth we will use a standard (re)labelling of the vertices of a finite signed tree $(T,s_T)$ described as follows. For each $\sigma\in T$ let $w(T;\sigma)$ denote the number of children of the root in $T_\sigma$. We will assume that for each vertex $\sigma$ and $x\in\omega$, $\sigma * x\in T$ if and only if $x<w(T;\sigma)$. The \emph{height} of $T\in\ST_\omega$ is defined as $\hgt{T}:=\max\{|\sigma|:\sigma\in T\}$.

We now introduce a new map $\mathrm{LIN} : \ST_\omega \to \ILO$ that is motivated from the map $\mathrm{lin}: \ST \to \ILO$.

Let $(T,s_T) \in \ST_\omega$ and $\sigma\in T$. We inductively assign a linear order to the triple $(T,s_T,\sigma)$ as follows: $$\mathrm{LIN}(T,s_T,\sigma) := \omega^{s_T(\sigma)}\times\Big(\sum_{x<w(T;\sigma)}\mathrm{LIN}(T,s_T,\sigma*x)\Big),$$ where $\sum_{x<w(T;\sigma)}\mathrm{LIN}(T,s_T,\sigma*x):=\mathbf 1$ if $w(T;\sigma)=0$. Finally we set $$\LIN{T}{s_T}:=\mathrm{LIN}(T,s_T,\emptyset).$$

Proposition \ref{indec} ensures that $\LIN{T}{s_T}$ is indeed indecomposable. The map $\mathrm{LIN}$ differs from the original map $\mathrm{lin}$ only in choosing a representative from the same equimorphism class.
\begin{proposition}\label{linLIN}
For $(T,s_T) \in \ST_{\omega}$, $\lin{T}{s_T} \sim \LIN{T}{s_T}$.
\end{proposition}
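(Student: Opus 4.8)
The plan is to prove, by induction on $\hgt{T_\sigma}$, the refined statement that $\mathrm{LIN}(T,s_T,\sigma)\sim\lin{T_\sigma}{s_{T_\sigma}}$ for every $\sigma\in T$; taking $\sigma=\emptyset$ recovers the proposition, since $\mathrm{LIN}(T,s_T,\emptyset)=\LIN{T}{s_T}$ and $T_\emptyset=T$. Before starting I would record the two congruence facts that drive the induction: equimorphism is preserved under finite sums and under lexicographic products, i.e.\ if $A\sim A'$ and $B\sim B'$ then $A+B\sim A'+B'$ and $A\times B\sim A'\times B'$. Both are immediate from gluing embeddings---an embedding $A\hookrightarrow A'$ placed before an embedding $B\hookrightarrow B'$ gives $A+B\hookrightarrow A'+B'$, and $(a,b)\mapsto(f(a),g(b))$ is an embedding $A\times B\hookrightarrow A'\times B'$ because the lexicographic order compares first coordinates first.

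For the base case $\hgt{T_\sigma}=0$ the vertex is a leaf, so $w(T;\sigma)=0$ and $\mathrm{LIN}(T,s_T,\sigma)=\omega^{s_T(\sigma)}\times\mathbf1\cong\omega^{s_T(\sigma)}$, which is precisely the value (up to isomorphism) that $\mathrm{lin}$ assigns to a single signed vertex, whence the two are equimorphic. For a non-leaf $\sigma$ with children $\sigma*0,\dots,\sigma*(w-1)$, $w=w(T;\sigma)$, I would first invoke the inductive hypothesis on each child to get $\lin{T_{\sigma*x}}{s_{T_{\sigma*x}}}\sim\mathrm{LIN}(T,s_T,\sigma*x)$. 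It then suffices to verify that $\mathrm{lin}$ obeys, up to equimorphism, the same one-step recursion as $\mathrm{LIN}$:
\[
\lin{T_\sigma}{s_{T_\sigma}}\ \sim\ \omega^{s_T(\sigma)}\times\Big(\sum_{x<w}\lin{T_{\sigma*x}}{s_{T_{\sigma*x}}}\Big).
\]
Indeed, feeding the inductive hypothesis into the right-hand side and using the two congruence facts rewrites it as $\omega^{s_T(\sigma)}\times\big(\sum_{x<w}\mathrm{LIN}(T,s_T,\sigma*x)\big)=\mathrm{LIN}(T,s_T,\sigma)$, completing the step.

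The heart of the matter, and the step I expect to be the main obstacle, is establishing this one-step recursion for $\mathrm{lin}$ directly from Mont\'{a}lban's recursive definition. There $\lin{T_\sigma}{s_{T_\sigma}}$ is realised as an $\omega$-sum (when $s_T(\sigma)=+$) or an $\omega^*$-sum (when $s_T(\sigma)=-$) of the linearizations of the child subtrees, but these summands are interleaved by a fixed shuffle rather than cycled through in the order $0,1,\dots,w-1$. What is therefore required is a shuffle-invariance lemma: if each index in $\{0,\dots,w-1\}$ occurs infinitely often in both $f$ and $g$, then $\sum_{n\in\omega}N_{f(n)}\sim\sum_{n\in\omega}N_{g(n)}$, and in particular both are equimorphic to $\sum_{n\in\omega}\big(N_0+\dots+N_{w-1}\big)=\omega\times\sum_{x<w}N_x$. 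This I would prove by exhibiting an embedding in each direction: sending the $n$-th block of one sum, a copy of $N_{f(n)}$, identically into a strictly later block of the other sum carrying the same type $N_{f(n)}$ is always possible since each type recurs arbitrarily far out, and choosing the target blocks strictly increasing makes the assignment order-preserving. Specialising to $N_x=\lin{T_{\sigma*x}}{s_{T_{\sigma*x}}}$ and to the cyclic shuffle $g(n)=n\bmod w$ then yields the displayed recursion, so the only genuine labour lies in this invariance statement and in reading off the precise shuffle used by $\mathrm{lin}$.
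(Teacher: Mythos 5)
Your proof is correct and shares the paper's skeleton --- induction on height, with the one-step recursion for $\mathrm{lin}$ as the crux --- but you handle that crux with a genuinely different device. The paper exploits the exact shape of Mont\'alban's definition: it writes $\lin{T}{s_T}$ as an $\omega$-sum of \emph{growing partial sums} of the children's linearizations, splits off the finite prefix so that the tail is literally $\omega\times\big(\sum_{n<w(T;\emptyset)}\lin{T_n}{s_{T_n}}\big)$, applies the inductive hypothesis, and then sandwiches: $\LIN{T}{s_T}\preceq\lin{T}{s_T}$ is immediate, and conversely everything embeds into $(w(T;\emptyset)+\omega)\times\big(\sum_{n<w(T;\emptyset)}\LIN{T_n}{s_{T_n}}\big)$, which is equimorphic to $\LIN{T}{s_T}$ because $w(T;\emptyset)+\omega\cong\omega$. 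You instead abstract the only feature of the interleaving that matters --- every child index recurs cofinally --- into a shuffle-invariance lemma, proved soundly by your strictly increasing block-matching argument, and combine it with the congruence of $\sim$ under finite sums and lexicographic products (your product map is indeed an embedding under the paper's convention that the first coordinate is compared first). What your route buys is a reusable, definition-independent lemma: any two interleavings with cofinally recurring summand types are equimorphic, so you need no sandwich and no reference to the precise block structure beyond checking the recurrence property, which $\mathrm{lin}$ visibly has (it is exactly what the first line of the paper's computation displays). What the paper's route buys is brevity: the growing-block shape makes the tail identification, and hence one of the two embeddings, immediate. The loose ends you leave --- the dual $\omega^*$-sum case when the sign is $-$, and reading the precise shuffle off the definition of $\mathrm{lin}$ --- are routine, and the paper leaves the dual case implicit as well (``the other case will be analogous'').
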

\begin{proof}
We use induction on the height of the tree to prove the result.

For the base case we have $T = \{\emptyset\}$, and the conclusion is immediate.

Next suppose $T \ne \{\emptyset\}$ and the result holds for all trees of smaller height. We prove the result when $s_T(\emptyset) = +$; the other case will be analogous. Clearly $\LIN{T}{s_T}\preceq\lin{T}{s_T}$. For the other direction we have
\begin{equation*}
    \begin{split}
        \lin{T}{s_T} &=\sum_{k\in\omega}{\Big(\sum_{n\leq\min\{k,w(T;\emptyset)\}}{\lin{T_n}{s_{T_n}}}\Big)}\\
        &=\sum_{k<w(T;\emptyset)}{\Big(\sum_{n\leq k}{\lin{T_n}{s_{T_n}}}\Big)}+\omega\times\Big(\sum_{n<w(T;\emptyset)}{\lin{T_n}{s_{T_n}}}\Big)\\
        &\sim\sum_{k<w(T;\emptyset)}{\Big(\sum_{n\leq k}{\LIN{T_n}{s_{T_n}}}\Big)}+\omega\times\Big(\sum_{n<w(T;\emptyset)}{\LIN{T_n}{s_{T_n}}}\Big)\\
        &=\sum_{k<w(T;\emptyset)}{\Big(\sum_{n\leq k}{\LIN{T_n}{s_{T_n}}}\Big)}+\LIN{T}{s_T}\\
        &\preceq(w(T;\emptyset)+\omega)\times\Big(\sum_{n<w(T;\emptyset)}{\LIN{T_n}{s_{T_n}}}\Big)\\
        &\sim\LIN{T}{s_T},
    \end{split}
\end{equation*}
where the inductive hypothesis is used in the third line.
\end{proof}

We now introduce a subclass of finite signed trees that will be used in our study of finite rank discrete linear orders.

\begin{definition}
Say that $(T,s_T) \in \ST_\omega$ is an \emph{alternating signed tree} (\emph{AST}, for short) if for every $\sigma\in T$, $w(T;\sigma)$ is even and, for each $x<w(T;\sigma)$,  $s_T(\sigma*x)=(-1)^x$.

We denote the class of alternating signed trees by $\AST$.
\end{definition}

The main goal of this section is to prove that the map $\mathrm{LIN}$ relates the classes $\AST$ and $\dILO_\omega$ in a way similar to the relation induced by the map $\mathrm{lin}$ between $\ST_\omega$ and $\ILO_\omega$ as given by Proposition \ref{ILO}.

\begin{theorem} \label{t1}
Let $(T,s_T)\in\AST$. Then $\LIN{T}{s_T} = \omega^{s_T(\emptyset)}\times L$, for a bounded $L\in\dLO_\omega$. Hence $\LIN{T}{s_T}\in\dILO_\omega$.
\end{theorem}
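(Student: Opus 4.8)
The plan is to prove the full statement by induction on $\hgt T$, taking the theorem itself as the inductive hypothesis. The base case $T=\{\emptyset\}$ gives $\LIN{T}{s_T}=\omega^{s_T(\emptyset)}\times\mathbf1$, and $\mathbf1$ is trivially a bounded discrete order of rank $0$. For the inductive step I would observe that each subtree $T_x$ (for $x<w(T;\emptyset)$) is again an alternating signed tree of strictly smaller height, so the hypothesis yields $\LIN{T_x}{s_{T_x}}=\omega^{s_T(x)}\times L_x$ with each $L_x$ bounded, discrete and of finite rank. By the definition of $\mathrm{LIN}$ we have $\LIN{T}{s_T}=\omega^{s_T(\emptyset)}\times L$ with $L=\sum_{x<w(T;\emptyset)}\omega^{s_T(x)}\times L_x$, so everything reduces to showing that this finite sum $L$ is bounded, discrete and of finite rank.

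The analysis of $L$ uses crucially that $w(T;\emptyset)$ is even and that the signs alternate as $s_T(x)=(-1)^x$. The single point to isolate is the one-sided boundedness of each summand $C_x:=\omega^{s_T(x)}\times L_x$: when $s_T(x)=+$ the order $\omega\times L_x$ has a minimum (namely $\min L_x$, which exists since $L_x$ is bounded) but is unbounded above, and dually when $s_T(x)=-$ the order $\omega^*\times L_x$ has a maximum but is unbounded below. Boundedness of $L$ then follows at once: the first summand $C_0$ is positive and supplies $\min L$, while the last summand $C_{w(T;\emptyset)-1}$ is negative (as $w(T;\emptyset)-1$ is odd) and supplies $\max L$. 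Finite rank of $L$ is immediate since a finite sum of finite-rank orders has finite rank.

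For discreteness I would use the elementary fact that a sum $A+B$ of nonempty discrete orders is discrete precisely when $A$ has a maximum if and only if $B$ has a minimum, and apply it to each junction of $L$. At a junction $C_x+C_{x+1}$ with $x$ even, the left piece is positive (no maximum) and the right piece is negative (no minimum), so both endpoints are absent and the junction is discrete; at a junction with $x$ odd, the left piece is negative (hence carries a maximum) and the right piece is positive (hence carries a minimum), so the cut is realised by an adjacent maximum–minimum pair and is again discrete. Each $C_x$ is itself discrete, being of the form $\omega^{\pm}\times(\text{bounded discrete})$, where the same maximum–minimum argument applies between consecutive copies. Hence $L$ is discrete, which completes the induction.

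Finally, $\LIN{T}{s_T}=\omega^{s_T(\emptyset)}\times L$ with $L$ bounded, discrete and of finite rank, giving the first assertion; this order is irreducible, hence indecomposable by Proposition~\ref{indec}, it is discrete (the same junction argument between consecutive copies of the bounded discrete order $L$), and it has finite rank, so it lies in $\dILO_\omega$. I expect the main obstacle to be the careful bookkeeping of one-sided boundedness across the two parities: the statement holds exactly because the alternation $+,-,+,-,\dots$ pairs every unbounded-above piece with an unbounded-below neighbour and every maximum-bearing piece with a minimum-bearing neighbour, and the evenness of $w(T;\sigma)$ is what forces the two outermost summands to furnish the global minimum and maximum. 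Formulating the ``discrete junction'' criterion cleanly and checking it uniformly is the technical crux.
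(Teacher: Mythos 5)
Your proof is correct and follows essentially the same route as the paper: induction on $\hgt{T}$, applying the hypothesis to the subtrees $(T_x,s_{T_x})\in\AST$, and using the alternation $s_T(x)=(-1)^x$ together with evenness of $w(T;\emptyset)$ to secure boundedness and discreteness of $L=\sum_{x<w(T;\emptyset)}\omega^{s_T(x)}\times L_x$. The only (cosmetic) difference is that the paper groups the summands into consecutive bounded discrete pairs $\omega\times L_x+\omega^*\times L_{x+1}$ and then sums those, whereas you verify the same facts junction by junction via an explicit criterion for when a sum of discrete orders is discrete.
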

\begin{proof}
We show this by induction on the height of the tree.

For the base case $T =\{\emptyset\}$. Then $\LIN{T}{s_T} = \omega^{s_T(\emptyset)}\times\mathbf{1}$, where $\mathbf{1}\in\dILO_\omega$ is the singleton order.

Now suppose $T \neq \{\emptyset\}$. Note that $(T_x,s_{T_x})\in\AST$ for each $x<w(T;\emptyset)$. By the inductive hypothesis, $\LIN{T_x}{s_{T_x}}=\omega^{(-1)^x}\times L_x$ for some bounded $L_x\in\dLO_\omega$. Then $$\LIN{T}{s_T} = \omega^{s_T(\emptyset)}\times(\omega\times L_0 + \omega^*\times L_1 + \ldots \omega\times L_{w(T;\emptyset)-2} + \omega^*\times L_{w(T;\emptyset)-1}).$$

Since $L_x$ is bounded and discrete, $\omega\times L_x$ and $\omega^*\times L_x$ are discrete. Moreover $\omega\times L_x+\omega^*\times L_{x+1}$ is bounded and discrete for each even $x<w(T;\emptyset)$ because the first summand is unbounded above while the latter is unbounded below. Since finite sum of bounded discrete linear orders is again so, we get that $(\omega\times L_0 + \omega^*\times L_1 + \ldots \omega\times L_{w(T;\emptyset)-2} + \omega^*\times L_{w(T;\emptyset)-1})\in\dLO_\omega$ and is bounded. 
\end{proof} 

The rest of this section is devoted to proving Theorem \ref{dILOAST} which is the converse of the above theorem.

\begin{rmk}
If $L$ is a linear order then $\omega+L\times\Z, L\times\Z+\omega^*, \omega+L\times\Z+\omega^*, L\times\Z$ are all discrete linear orders.
\end{rmk}
In fact the converse is also true.
\begin{proposition} \label{p2}
For each discrete linear order $L$ there is a linear order $L'$ such that exactly one of the following holds.
\begin{enumerate}
    \item If $L$ has a minimum element, but not a maximum element, then $L \cong \omega + L'\times\Z$.
    \item If $L$ has a maximum element, but not a minimum element, then $L \cong L'\times\Z + \omega^*$.
    \item  If $L$ has a maximum and a minimum element, then $L \cong \omega + L'\times\Z + \omega^*$.
    \item If $L$ does not have a maximum or a minimum element, then $L \cong L'\times\Z$.
\end{enumerate}
\end{proposition}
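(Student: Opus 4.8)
The plan is to cut $L$ into its maximal ``bounded-distance'' blocks and to read off the four cases from the shapes of these blocks together with the two extreme ones. Concretely I would reuse the relation $\sim_1$ from the earlier definition: for $x<y$ one has $x\sim_1 y$ exactly when $[x,y]$ is finite (since $\sim_0$ is the identity, having finitely many $\sim_0$-classes in $[x,y]$ just says that $[x,y]$ is finite). Its equivalence classes are convex blocks, and the induced quotient order is $L^{(1)}$; writing each class as $C_q$ for $q\in L^{(1)}$ we have $L\cong\sum_{q\in L^{(1)}}C_q$.

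First I would determine the isomorphism type of a single block $C$. Fixing $x\in C$ and using discreteness, every other element of $C$ is reached from $x$ by finitely many immediate-successor or immediate-predecessor steps (each such step lands on an element at finite distance from $x$, hence again in $C$), so $C$ is the two-sided chain of iterates of the successor function at $x$. Hence $C\cong\Z$ if $C$ is unbounded in both directions, $C\cong\omega$ if $C$ has a least element but no greatest, $C\cong\omega^*$ in the reverse situation, and $C\cong\mathbf n$ if $C$ is finite.

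The crux is then to pin down the interior blocks. I would prove that a block $C$ which is not the least element of $L^{(1)}$ has no minimum, and dually that a block which is not the greatest element has no maximum. For the first claim, if such a $C$ had a minimum $c$, then $c$ is not the minimum of $L$ (there are elements below $C$), so by discreteness $c$ has an immediate predecessor $c^-$; but $[c^-,c]$ is finite, whence $c^-\sim_1 c$ and $c^-\in C$ with $c^-<c$, contradicting minimality. Together with the previous paragraph this forces every block other than the least and the greatest to be isomorphic to $\Z$.

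Finally I would assemble the four cases. In Case~(1) the minimum of $L$ lies in the least block $C_{q_0}$, which therefore has a least element; since $L$ has no maximum, no block (in particular not the greatest, if it exists) has a maximum, so $C_{q_0}\cong\omega$ and every other block is $\cong\Z$, giving $L\cong\omega+L'\times\Z$ with $L'=L^{(1)}\setminus\{q_0\}$ and using $\sum_{i\in L'}\Z\cong L'\times\Z$. Cases (2) and (4) are the exact mirror image and the no-extreme-block version, handled identically, and Case~(3) peels an $\omega$ off the left and an $\omega^*$ off the right. I expect the only real nuisance to be the degenerate single-block configurations: these must be folded in so that $\omega$, $\omega^*$ and $\Z$ themselves appear as the $L'=\emptyset$ (or one-point $L'$) instances of Cases (1), (2) and (4), and one should note that a nonempty finite order---being a single finite block with both endpoints---is the genuinely exceptional case lying outside the infinite normal form of Case~(3).
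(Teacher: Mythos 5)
Your proof is correct and is essentially the paper's own argument written out in full detail: the paper's one-line proof likewise takes $L'$ to be $L^{(1)}$ with its endpoints (if any) removed, the $\sim_1$-classes being exactly your blocks---a copy of $\omega$ at a minimum of $L$, a copy of $\omega^*$ at a maximum, and copies of $\Z$ in the interior. Your closing caveat is also well spotted: a nonempty finite order does fall outside the normal form of Case~(3), a degenerate case that the paper's statement and proof pass over silently.
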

\begin{proof}
The linear order $L'$ is obtained by removing the endpoints of the order $L^{(1)}$, if such endpoints exist, for if $L$ has a minimal (resp. maximal) element then its $\sim_1$-equivalence class is a copy of $\omega$ (resp. $\omega^*$).
\end{proof}

\begin{lemma} \label{l1}
Suppose $L\times\Z = L\times(\omega^* + \omega)$ is an indecomposable linear order for some linear order $L$. Then $L$ is indecomposable as well.
\end{lemma}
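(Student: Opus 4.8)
The plan is to reduce the statement to a single cancellation principle for the right factor $\Z$. Writing elements of $L\times\Z$ as pairs $(l,n)$ with $l\in L$, $n\in\Z$, ordered lexicographically (and likewise for $A\times\Z$), the central step I would isolate is the following: for \emph{arbitrary} linear orders $L,A$, if $L\times\Z\preceq A\times\Z$ then $L\preceq A$. I will call this the cancellation lemma.

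Granting the cancellation lemma, the result is immediate. Suppose $L\preceq A+B$. The lexicographic product is monotone in its left argument---an embedding $L\hookrightarrow A+B$ extends coordinatewise, via $(l,n)\mapsto(g(l),n)$, to an embedding $L\times\Z\hookrightarrow (A+B)\times\Z$---and by Remark~\ref{rightdistr} we have $(A+B)\times\Z\cong A\times\Z+B\times\Z$. Hence $L\times\Z\preceq A\times\Z+B\times\Z$. Since $L\times\Z$ is indecomposable, either $L\times\Z\preceq A\times\Z$ or $L\times\Z\preceq B\times\Z$, and the cancellation lemma then gives $L\preceq A$ or $L\preceq B$. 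Thus $L$ is indecomposable.

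To prove the cancellation lemma, fix an embedding $f\colon L\times\Z\hookrightarrow A\times\Z$ and let $\pi\colon A\times\Z\to A$ denote projection onto the first coordinate. I would define $\phi\colon L\to A$ by $\phi(l):=\pi(f(l,0))$ and argue that it is an order embedding. Non-strict monotonicity is automatic: $l<l'$ forces $f(l,0)<f(l',0)$, whence $\phi(l)\le\phi(l')$. The crux is strictness. Suppose $\phi(l)=\phi(l')=a$ for some $l<l'$; then $f(l,0)=(a,p)$ and $f(l',0)=(a,q)$ with $p<q$. For every $n\ge 1$ we have $(l,0)<(l,n)<(l',0)$, so $f(l,n)$ lies strictly between $(a,p)$ and $(a,q)$. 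But any element of $A\times\Z$ strictly between these two must have first coordinate exactly $a$ and second coordinate strictly between $p$ and $q$, of which there are only finitely many. This contradicts injectivity of $f$ on the infinite set $\{(l,n):n\ge 1\}$, so $\phi$ is strictly increasing and hence an embedding $L\hookrightarrow A$.

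The only genuine obstacle is this strictness argument, and it is precisely where the shape $\Z=\omega^*+\omega$ matters: the proof relies on each block $\{a\}\times\Z$ of the target being locally finite (only finitely many points lie between any two of its elements), so that two distinct blocks of $L\times\Z$ cannot be collapsed into a single block of $A\times\Z$. I expect the reduction step above to be entirely routine; the cancellation lemma packages the one subtle point of the whole argument.
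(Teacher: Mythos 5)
Your proof is correct and takes essentially the same route as the paper: both reduce, via right distributivity and indecomposability of $L\times\Z$, to an embedding $L\times\Z\preceq A\times\Z$, and then show that the first-coordinate projection $l\mapsto\pi(f(l,0))$ is an embedding, the crux in each case being that an interval inside a single block $\{a\}\times\Z$ of the target is finite while the interval between $(l,0)$ and $(l',0)$ in $L\times\Z$ is infinite (the paper phrases this by embedding $\omega+\omega^*$, you by pigeonhole on $\{(l,n):n\geq 1\}$). Isolating this step as a stand-alone cancellation lemma is only a cosmetic repackaging of the paper's argument.
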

\begin{proof}
Suppose $L \preceq L_1 + L_2$ for linear orders $L_1,L_2$. Then $L\times\Z \preceq (L_1 + L_2)\times\Z \cong L_1\times\Z + L_2\times\Z$ by using right distributivity. Since $L\times\Z$ is indecomposable, without loss we assume $L\times\Z \preceq L_1\times\Z$.

Let $(a,n) \mapsto (f_1(a,n),f_2(a,n)):L\times\Z \to L_1\times\Z$ be an embedding.

\noindent{\textbf{Claim:}} The map $a \mapsto f_1(a,0):L \to L_1$ is an embedding.

Indeed if $a<b$ in $ L$ then $\omega+\omega^*$ embeds in the interval $[(a,0),(b,0)]$ in $L\times\Z$, and hence in the interval $[(f_1(a,0),f_2(a,0)),(f_1(b,0),f_2(b,0))]$ in $L_1\times\Z$. As a consequence we get $f_1(a,0)<f_1(b,0)$. Thus $L\preceq L_1$, and hence $L$ is indecomposable.
\end{proof}

Now we are ready to prove the promised analogue of Proposition \ref{ILO} for the class $\dILO_\omega$.
\begin{theorem}\label{dILOAST}
If $L\in\dILO_\omega$ is infinite, then $L \sim \LIN{T}{s_T}$ for some $(T,s_T) \in \AST$.
\end{theorem}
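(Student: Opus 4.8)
The plan is to induct on $\rk{L}$, using Laver's theorem (Theorem \ref{t2}) to split off the outer $\omega$ or $\omega^*$ and the induction hypothesis to dispatch the lower-rank pieces. For the base case $\rk{L}=1$, an infinite indecomposable discrete order collapses under $\sim_1$ to a finite order, so it is a single block of type $\omega$, $\omega^*$ or $\Z$; the $\Z$-block decomposes (as $\omega^*+\omega$ witnesses), so $L\sim\omega$ or $L\sim\omega^*$. These are realised by the single-vertex alternating trees with root sign $+$ and $-$ respectively, for which the alternating condition holds vacuously.

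For the inductive step, suppose $\rk{L}=\alpha\ge 2$. By Theorem \ref{t2} together with the fact that the root sign is determined by whether $L$ is an $\omega$-sum or an $\omega^*$-sum, I would assume without loss of generality that $L$ is an $\omega$-sum, so that the root of the tree we build carries the sign $+$ (the $\omega^*$-case being dual). The first task is to show $L\sim\omega\times L_0$ for some \emph{bounded} discrete order $L_0$ with $\rk{L_0}=\alpha-1$. Here I would appeal to Proposition \ref{p2}: when $L$ has a least element its initial $\sim_1$-class is a copy of $\omega$, and peeling it off together with the periodicity forced by indecomposability exhibits the outer $\omega$-factor; in the endpoint-free case $L\cong L'\times\Z$ one first invokes Lemma \ref{l1} to see that $L'$ is indecomposable, and then rewrites $L'\times\Z$ up to equimorphism as an $\omega$-product of a bounded discrete order. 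Once $L\sim\omega\times L_0$ is in hand, I would decompose the bounded order $L_0$ as a finite sum $A_1+\dots+A_r$ of indecomposables via Theorem \ref{t2}; since each $A_i$ is a convex piece of the discrete order $L_0$ it is itself discrete, and $\rk{A_i}\le\alpha-1<\alpha$, so the induction hypothesis applies to each infinite $A_i$, giving $A_i\sim\LIN{S_i}{s_{S_i}}=\omega^{\delta_i}\times C_i$ for an alternating tree $S_i$ and bounded discrete $C_i$, the finite summands being absorbed into their neighbours.

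It then remains to assemble these subtrees into a single alternating tree $T'$ whose root has sign $+$ and whose children are the $S_i$ in order. The key observation, which I would prove by the junction analysis already used in Theorem \ref{t1}, is that discreteness is incompatible with two consecutive children of the same sign: a sign-$+$ child is unbounded above yet has a least element, so a second sign-$+$ child placed immediately after it would create a least element with no immediate predecessor, violating discreteness, and dually for two sign-$-$ children. Hence the boundedness and discreteness of $\omega\times L_0$ force the signs $\delta_i$ to alternate $+,-,+,-,\dots$, to begin with $+$ and to end with $-$, so that $r$ is even and the child signs match the pattern $(-1)^x$ demanded by the definition of $\AST$. The assembled tree $T'$ then lies in $\AST$ with $\LIN{T'}{s_{T'}}\sim L$.

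The hard part will be the reduction $L\sim\omega\times L_0$ with $L_0$ bounded, and specifically the endpoint-free case $L\cong L'\times\Z$: here $L'$ need not be discrete, so one cannot recurse on $L'$ directly and must instead reorganise $L'\times\Z$ into an $\omega$-product of a bounded core purely up to equimorphism, guided by the toy identity $\omega\times\Z\sim\omega\times(\omega+\omega^*)$. The second delicate point is the bookkeeping that makes the children of the root \emph{literally} alternate in sign and even in number—rather than merely alternate after merging adjacent same-sign fragments and discarding finite pieces—so that the resulting tree matches the definition of $\AST$ exactly.
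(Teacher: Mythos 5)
Your inductive skeleton resembles the paper's, but the step you call the ``key observation'' is false, and it is load-bearing. You claim that in a Laver decomposition $L_0=A_1+\cdots+A_r$ of a bounded discrete order the types of the indecomposable summands must literally alternate, arguing that a sign-$+$ piece ``is unbounded above yet has a least element.'' That property holds for the \emph{representative} $\omega^{\delta_i}\times C_i$ with $C_i$ bounded, but not for the actual convex piece $A_i$: equimorphism does not preserve the existence of endpoints. Concretely, $\omega\times\Z=\Z+\Z+\cdots$ is discrete, irreducible (hence indecomposable by Proposition \ref{indec}) and of type $+$, yet it has no least element; consequently $L_0=\omega+\omega\times\Z+\omega^*$ is a bounded discrete order admitting a decomposition into indecomposables with the non-alternating sign pattern $+,+,-$. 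Taking $L=\omega\times L_0$ gives a legitimate rank-$3$ input to the theorem for which your assembly yields a tree whose root-children carry signs $+,+,-$, which is not in $\AST$. Alternation can only be restored by regrouping -- here $\omega+\omega\times\Z\cong\omega\times(\omega+\omega^*)$, giving the alternating decomposition $\omega\times(\omega+\omega^*)+\omega^*$ -- but producing such a good decomposition in general is precisely the content your argument lacks; it is not bookkeeping. Your other reduction, $L\sim\omega\times L_0$ with $L_0$ bounded and discrete, is also left unproved (you yourself flag it as the hard part), and it already has the shape of the conclusion (cf.\ Theorem \ref{t1}).

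The paper avoids both problems by never trying to read alternation off the order. It writes $L\sim L'\times\Z$ with $L'$ indecomposable (Proposition \ref{p2} and Lemma \ref{l1}), takes an \emph{arbitrary} signed tree for $L'$ from the equimorphism classification (Propositions \ref{ILO} and \ref{linLIN}), and distributes $\Z$ inwards: $L'\times\Z\sim\omega\times\big((L_0\times\Z)+\cdots+(L_{n-1}\times\Z)\big)$, so that each piece $L_x\times\Z$ is again discrete and indecomposable of strictly smaller rank -- this, rather than ``bounded discrete,'' is the invariant that lets the induction go through. After attaching the ASTs supplied by the induction hypothesis, the children's signs need not alternate, and the paper repairs this by \emph{inserting} extra children with the missing signs and proving the enlarged signed tree equimorphic to the original one: this is possible because some child subtree has height at least $2$ and, being alternating, contains non-root vertices of both signs, so the new vertices can be embedded back sign-preservingly, and then \cite[Lemma~2.8]{marcone} transfers the tree equimorphism to the linear orders. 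Your proposal has no analogue of this repair mechanism, and the claim you substitute for it is false, so the proof does not go through as written.
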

\begin{proof}
If $\rk{L}= 1$ then either $L\cong\omega$ or $L\cong\omega^*$--the required ASTs in those two cases are $\big(T = \{\emptyset\},s_T(\emptyset)=+\big)$ and $\big(T = \{\emptyset\},s_T(\emptyset)=-\big)$ respectively.

Now suppose $\rk{L}> 1$. By Proposition \ref{p2}, $L$ is isomorphic to one of $\omega + L'\times\Z + \omega^*$, $\omega + L'\times\Z$, $L'\times\Z + \omega^*$ or $L'\times\Z$ for some $L'$ of finite rank. Since $L$ is indecomposable, $$L\sim L'\times\Z.$$ Using Remark \ref{indecinvequim} and Lemma \ref{l1} we see that since $L$ is indecomposable, so is $L'$.

We use induction on $\rk{L}$ to prove the result. 

\noindent{Base case:} If $\rk{L} = 2$ then $\rk{L'} = 1$ and $L'$ is indecomposable. So $L'\cong \omega$ or $L'\cong \omega^*$. It can be readily verified that $\omega\times\Z$ and $\omega^*\times\Z$ are equimorphic to the images under $\mathrm{LIN}$ of the ASTs $\big(T = \{\emptyset,0,1\}, s_T = \{\emptyset\mapsto +, 0\mapsto +, 1\mapsto -\}\big)$ and $\big(T = \{\emptyset,0,1\}, s_T = \{\emptyset\mapsto -, 0\mapsto +, 1\mapsto -\}\big)$ respectively.

\noindent{Inductive case:} Suppose $m:=\rk{L}> 2$. Propositions \ref{ILO} and \ref{linLIN} together provide $(T',s_{T'})\in \ST_\omega$ such that $L'\sim\LIN{T'}{s_{T'}}$. Let $n:=w(T;\emptyset)$ and $L_x:=\LIN{T'_x}{s_{T'_x}}$ for each $x<n$.  Without loss of generality, assume that $s_{T'}(\emptyset) = +$. Then $L'\ \sim \omega\times(L_0 + \cdots + L_{n-1})$. Therefore
$$L'\times\Z\sim \ (\omega\times(L_0 + \cdots + L_{n-1}))\times\Z \ \cong \ \omega\times((L_0\times\Z) + \cdots + (L_{n-1}\times\Z)).$$
As $L' \sim \omega\times(L_0 + \cdots + L_{n-1})$, for each $x<n$ we have $1\leq\rk{L_x} < \rk{L'}=m-1$, and hence $2\leq \rk{L_x\times\Z} < m$. By the inductive hypothesis there are $(\widetilde{T}_x,s_{\widetilde{T}_x}) \in \AST$ such that $\LIN{\widetilde{T}_x}{s_{\widetilde{T}_x}} \sim L_x\times\Z$. 

Let $T \in \ST_\omega$ be defined as follows.
\begin{itemize}
    \item The root $\emptyset \in T$ has $n$ children and $s_T(\emptyset) = +$.
    \item For each child $x$ of the root, let $T_x := \widetilde{T}_x$ and the restriction of $s_T$ to $T_x$ coincides with $s_{\widetilde{T}_x}$.
\end{itemize}
Then clearly $$\LIN{T}{s_T} \sim \omega\times((L_0\times\Z) + \ldots + (L_{n-1}\times\Z)) \sim L'\times\Z.$$ 

Since $T_{x_0}\in\AST$ it only remains to edit $T$ to ensure that the signs of the children of the root of $T$ are in an alternating order starting with $+$.
\begin{enumerate}
    \item Let $(\widetilde{T},s_{\widetilde{T}})$ be a copy of $(T,s_T)$. Traverse through the children of the root of $\widetilde{T}$ in order starting from the vertex $0$.
    \item If $s_{\widetilde{T}}(x)=(-1)^x$ then move to the next child. Otherwise relabel $y$ as $y+1$ for $y\geq x$, and add a new child of the root with label $x$ and sign $(-1)^x$. As a result the width of the tree $\widetilde{T}$ increases by $1$.
    \item Continue the previous step until all the children of the root have been considered.
\end{enumerate} 

It is easy to see that the resultant $\widetilde{T}$ is in $\AST$.

\noindent{\textbf{Claim:}} $\LIN{\widetilde{T}}{s_{\widetilde{T}}}\sim L$.

To establish the claim it is enough to show that $(\widetilde{T},s_{\widetilde{T}}) \sim (T,s_T)$ in view of \cite[Lemma~2.8]{marcone}. Clearly $(T,s_T) \preceq (\widetilde{T},s_{\widetilde{T}})$. 

For the other direction, observe that the only difference between $T$ and $\widetilde{T}$ is that the latter possibly contains more children of the root than the former. So identifying the copy of the former in the latter, it remains to map the newly added vertices in a sign-preserving manner.

Since $m>2$ there is some $x_0<n$ such that $\hgt{T_{x_0}}\geq2$. Moreover since $T_{x_0}\in\AST$ there is at least one non-root vertex in $T_{x_0}$ of each sign. This provides us with the necessary vertices.

This establishes the required equimorphism and hence the claim.
\end{proof}

\section{Finitely presented linear orders}\label{fplo}
Having characterized all finite rank (discrete) linear orders up to equimorphism with the help of (A)STs, we would like to characterize these linear orders up to isomorphism. 

Recall that equimorphism is a weak notion because an equimorphism class of linear orders can consist of profoundly different linear orders (e.g., $\omega\times(\omega + \omega^*) \sim \omega\times(\omega^* + \omega)$).

Consider the following example:
$$L = \sum_{i\in\omega}{L_i} \text{ where } L_i = \begin{cases}
\omega \text{ if $i$ is prime};\\
\omega^* \text{ otherwise}.
\end{cases}$$

Clearly $L$ is a linear order of Hausdorff rank $2$ but there is no `compact' way to present it. However it can be easily seen to be equimorphic to $\omega\times(\omega + \omega^*)$, which is `compactly presented'.

To tackle the problem of characterizing linear orders up to isomorphisms we restrict our attention to a subclass of `compactly presented' linear orders.
\begin{definition}\label{fp}
The class $\LOfp$ of \emph{finitely presented} linear orders is defined as the smallest subclass of linear orders closed under isomorphisms such that
\begin{enumerate}
    \item $\mathbf{0},\mathbf{1}\in\LOfp$;
    \item if $L_1,L_2\in\LOfp$ then $L_1+L_2\in\LOfp$;
    \item if $L\in\LOfp$ then $\omega\times L,\omega^*\times L\in\LOfp$.
\end{enumerate}
\end{definition}
Clearly a finitely presented linear order is of finite rank, and all such orders are precisely those which can be written using finitely many $+$ and $\omega^{\pm}\times(\mbox{-})$ operations.

\begin{rmk}\label{fpchar}
In view of Remark \ref{rightdistr}, $\LOfp$ is precisely the smallest class of linear orders that contains $\mathbf{0},\mathbf{1},\omega,\omega^*$ and is closed under finite sums and finite products.
\end{rmk}
\begin{rmk}
If $(T,s_T)\in \ST_\omega$ then $\LIN{T}{s_T}\in\LOfp$. Hence Proposition \ref{ILO} gives that each indecomposable finite rank linear order is equimorphic to a finitely presented one.
\end{rmk}

Since $\LIN{T}{s_T}$ is indecomposable for $(T,s_T)\in\ST_\omega$ we define a new class of rooted trees to deal with decomposable linear orders as well. This would be very similar to finite signed trees except that we allow vertices to have a third sign.
\begin{definition}
A \emph{3-signed tree} (3ST, for short) is a pair $(T, s_T)$ where $T \subseteq \omega^{<\omega}$ is a non-empty well-founded finite tree and $s_T : T \to \{+,-,0\}$ is a map satisfying $s_T(\sigma)=0$ if and only if $\sigma$ is either the root or a leaf of $T$. The class of 3-signed trees will be denoted $3\ST_\omega$.
\end{definition}
For a tree $T$ and a non-root vertex $\sigma$ of $T$, we denote its parent by $\pi(\sigma)$.

There is a natural embedding $I:\ST_\omega\to3\ST_\omega$ that appends a sign $0$ root as well as a sign $0$ child to each leaf of a signed tree in $\ST_\omega$.

In a similar spirit, for $(T, s_T) \in 3\ST_\omega$ and a non-root vertex $\sigma\in T$, define $(T_\sigma,s_{T_\sigma})$ to be the $3ST$ obtained by appending a sign $0$ root to the subtree of $(T,s_T)$ induced by $\sigma$. Moreover if $s_T(\sigma)\neq0$, we also define $(\widehat T_\sigma,s_{\widehat T_\sigma})$ to be the $3ST$ obtained by changing the sign of the root of the subtree of $(T,s_T)$ induced by $\sigma$ to $0$.

Given $(T,s_T)\in3\ST_\omega$ and $\delta\in\{+,-\}$, we define $(T^\delta,s_{T^\delta})$ to be the 3ST obtained by assigning sign $\delta$ to the root of $T$, and then appending a sign 0 root to such tree.

We continue to use the standard (re)labelling of the vertices of a $3ST$ as discussed at the beginning of \S\ref{sectAST}.

For $(T,s_T),(T',s_{T'})\in 3\ST_\omega$ define $(T,s_T)\curlyvee (T',s_{T'})$, the \emph{join} of $(T,s_T)$ and $(T',s_{T'})$, to be the $3ST$ obtained by identifying the roots of $T$ and $T'$ in $(T,s_T)\sqcup (T',s_{T'})$, where the induced subtrees of the children of the root of the latter are added after the induced subtrees of the children of the root of the former. We then call $(T,s_T)$ a \emph{prefix} of $(T,s_T)\curlyvee (T',s_{T'})$ and $(T',s_{T'})$ a \emph{suffix} of $(T,s_T)\curlyvee (T',s_{T'})$.

We now associate a linear order to each element of $3\ST_\omega$ using a construction similar to the map $\mathrm{LIN}$ described for $\ST_\omega$.

Let $(T,s_T) \in 3\ST_\omega$. If $|T|=1$ then we define $$\LIN{T}{s_T}:=\mathbf 0.$$ If $|T|>1$ and $\sigma\in T$ then we inductively assign a linear order to the triple $(T,s_T,\sigma)$ as follows: $$\mathrm{LIN}(T,s_T,\sigma) := \omega^{s_T(\sigma)}\times\Big(\sum_{x<w(T;\sigma)}\mathrm{LIN}(T,s_T,\sigma*x)\Big),$$ where $\sum_{x<w(T;\sigma)}\mathrm{LIN}(T,s_T,\sigma*x):=\mathbf 1$ if $w(T;\sigma)=0$. Finally we set $$\LIN{T}{s_T}:=\mathrm{LIN}(T,s_T,\emptyset).$$

\begin{rmk}
For $(T,s_T) \in \ST_\omega$ it is readily seen that $\LIN{T}{s_T}\cong\mathrm{LIN}(I(T,s_T))$.
\end{rmk}

Clearly for $(T,s_T) \in 3\ST_\omega$, $\LIN{T}{s_T}$ will consist of finitely many $+$ and $\omega^{\pm}\times(\mbox{-})$ operations, hence $\mathrm{LIN}$ is a map $3\ST_\omega\to\LOfp$. Moreover for a finitely presented linear order $L$, the inductive nature of Definition \ref{fp} allows us to construct a 3-signed tree $(T,s_T)$ such that $\LIN{T}{s_T}\cong L$. We collect these observations in the next result.

\begin{proposition}\label{3STLOfpcorr}
For a linear order $L$, $L\in\LOfp$ if and only if there is $(T,s_T) \in 3\ST_\omega$ such that $L\cong\LIN{T}{s_T}$.
\end{proposition}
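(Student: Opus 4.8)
The plan is to prove both implications of Proposition \ref{3STLOfpcorr} separately, since one direction follows by a straightforward induction on tree structure and the other follows by structural induction on the inductive definition of $\LOfp$.

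\textbf{Forward direction.} First I would show that if $(T,s_T) \in 3\ST_\omega$ then $\LIN{T}{s_T} \in \LOfp$. This proceeds by induction on $|T|$ (equivalently on $\hgt{T}$). For the base case $|T|=1$ we have $\LIN{T}{s_T} = \mathbf{0} \in \LOfp$ by clause (1) of Definition \ref{fp}. For the inductive step with $|T|>1$, unfolding the definition gives $\LIN{T}{s_T} = \omega^{s_T(\emptyset)} \times \big(\sum_{x<w(T;\emptyset)} \mathrm{LIN}(T,s_T,\emptyset * x)\big)$. Each summand $\mathrm{LIN}(T,s_T,\emptyset * x)$ equals $\LIN{T_x}{s_{T_x}}$ for the strictly smaller $3ST$ $(T_x, s_{T_x})$, so by the inductive hypothesis each lies in $\LOfp$. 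Closure under finite sums (clause (2)) gives the bracketed sum in $\LOfp$, and since $s_T(\emptyset) = 0$ forces $\omega^{0} \times(-) = \mathbf{1}\times(-)$ to be the identity (the root always carries sign $0$), we conclude $\LIN{T}{s_T} \in \LOfp$. I should remark that when the root sign is $0$ the outer $\omega^{s_T(\emptyset)}$ operation is trivial, so no appeal to clause (3) is needed at the root; clause (3) is invoked at the non-root, non-leaf vertices during the recursive construction of the summands.

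\textbf{Reverse direction.} For the converse I would use structural induction on the construction of $L \in \LOfp$ as specified by Definition \ref{fp}, building the witnessing $3ST$ at each stage. For the generators $\mathbf{0}$ and $\mathbf{1}$, the trees are $(\{\emptyset\}, \emptyset \mapsto 0)$ and a two-vertex tree respectively, matching the $|T|=1$ and suitable small cases of the $\mathrm{LIN}$ definition. For closure under sums, given $L_1 \cong \LIN{T}{s_T}$ and $L_2 \cong \LIN{T'}{s_{T'}}$, the join $(T,s_T) \curlyvee (T',s_{T'})$ defined in the excerpt is the natural candidate, and one checks directly from the recursive formula that $\LIN{(T,s_T)\curlyvee(T',s_{T'})} \cong L_1 + L_2$, since the join concatenates the children of the two roots and the root sign $0$ makes the top-level operation a pure sum. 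For closure under $\omega^{\pm}\times(-)$, given $L \cong \LIN{T}{s_T}$ with $\delta \in \{+,-\}$, the operation $(T^\delta, s_{T^\delta})$ defined in the excerpt inserts a new sign-$\delta$ root beneath a fresh sign-$0$ root, and unwinding the definition yields $\LIN{T^\delta}{s_{T^\delta}} \cong \omega^{\delta} \times L$.

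\textbf{Main obstacle.} The only genuinely delicate point is bookkeeping around the sign-$0$ convention: the definition of $3\ST_\omega$ requires $s_T(\sigma)=0$ exactly when $\sigma$ is the root or a leaf, so I must verify that the operations $\curlyvee$ and $(-)^\delta$ preserve membership in $3\ST_\omega$ (e.g. that after a join no interior vertex accidentally acquires sign $0$, and that after the $(-)^\delta$ construction the previously-root vertex of $T$, now carrying sign $\delta \neq 0$, is indeed no longer a leaf or root). Checking these side conditions, together with confirming that the $\mathbf{0}$ and $\mathbf{1}$ base cases produce trees satisfying the sign constraint, is the crux; the isomorphisms themselves are immediate from the recursive definition of $\mathrm{LIN}$. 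These verifications are routine, which is presumably why the authors state the result without a detailed proof.
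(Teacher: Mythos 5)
Your overall strategy---both directions by induction, with $\curlyvee$ witnessing sums and $(-)^\delta$ witnessing $\omega^\pm\times(-)$---is exactly the one the paper has in mind (it records the proposition as a pair of observations, with no detailed proof), but your forward direction contains a genuine circularity. You induct on $|T|$ (equivalently $\hgt{T}$) and reduce to the ``strictly smaller'' 3STs $(T_x,s_{T_x})$. When $w(T;\emptyset)=1$ the strict decrease is false: by the paper's definition, $(T_x,s_{T_x})$ is obtained by appending a sign-$0$ root to the subtree induced by $x$, so for the unique child $x=0$ of the root one gets $(T_0,s_{T_0})=(T,s_T)$ exactly---same vertex count, same height---and the inductive step presupposes the very instance being proved. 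This case is unavoidable, not marginal: it occurs already for $T=\{\emptyset,0,00\}$ with $s_T(0)=+$, the tree presenting $\omega$, and every tree of the form $(T_x,s_{T_x})$ has a width-one root, so each appeal to the hypothesis lands on a tree of precisely this problematic shape. A tell-tale symptom is your own remark about clause (3) of Definition \ref{fp}: in the induction as you structure it, clause (3) is never actually invoked anywhere (it is hidden inside the circular appeal to the hypothesis), yet clause (3) is the only mechanism by which infinite orders enter $\LOfp$, so no argument that never uses it can be correct. The repair is to change what you induct on: prove, by induction on the height of the subtree below $\sigma$, that $\mathrm{LIN}(T,s_T,\sigma)\in\LOfp$ for every vertex $\sigma$ of the fixed tree $T$---leaves give $\mathbf 1$; an interior non-root $\sigma$ uses clause (2) for the finite sum over its children and then clause (3) for $\omega^{s_T(\sigma)}\times(-)$, legitimate since $s_T(\sigma)\in\{+,-\}$ there; the root needs only clause (2). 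Alternatively, handle $w(T;\emptyset)=1$ separately by writing $\LIN{T}{s_T}=\omega^{s_T(0)}\times\LIN{\widehat{T}_0}{s_{\widehat{T}_0}}$, since $(\widehat{T}_0,s_{\widehat{T}_0})$ genuinely has fewer vertices than $(T,s_T)$.

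The reverse direction is essentially sound, but one of the side conditions you defer as routine in fact fails in a corner case: for $T=\{\emptyset\}$ (i.e.\ $L=\mathbf 0$), the construction $(T^\delta,s_{T^\delta})$ turns the old root into a sign-$\delta$ leaf, violating the sign convention, and even ignoring that it yields $\LIN{T^\delta}{s_{T^\delta}}=\omega^\delta\neq\mathbf 0$. Since $\omega^\delta\times\mathbf 0=\mathbf 0$, the witness in this case is simply $\{\emptyset\}$ itself. With these two repairs your write-up becomes a correct filling-in of the argument the paper leaves implicit.
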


\begin{proposition}\label{summandfp}
Let $L\in\LOfp$. If $L=L_1+L_2$ for non-empty linear orders $L_1,L_2$ then $L_1,L_2\in\LOfp$.
\end{proposition}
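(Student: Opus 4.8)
The plan is to prove a slightly stronger statement that is better suited to induction: for every $L\in\LOfp$ and every decomposition $L=L_1+L_2$ with $L_1,L_2$ \emph{possibly empty}, both summands lie in $\LOfp$. Since $\mathbf 0\in\LOfp$, this implies the proposition. Writing $P(L)$ for this property and setting $\mathcal C=\{L\in\LOfp: P(L)\}$, I observe that $P$ is isomorphism-invariant, so $\mathcal C$ is closed under isomorphism. Because $\LOfp$ is, by Definition \ref{fp}, the smallest isomorphism-closed class containing $\mathbf 0,\mathbf 1$ and closed under $+$ and $\omega^{\pm}\times(\mbox{-})$, it will suffice to check that $\mathbf 0,\mathbf 1\in\mathcal C$ and that $\mathcal C$ is closed under these three operations; minimality of $\LOfp$ then forces $\mathcal C=\LOfp$, which is what we want.

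The base cases are immediate, since any decomposition of $\mathbf 0$ or $\mathbf 1$ has both parts among $\mathbf 0,\mathbf 1$. For closure under $+$ I would take $L_1,L_2\in\mathcal C$ and a decomposition $L_1+L_2=A+B$, and locate the cut between $A$ and $B$ inside the concrete order $L_1+L_2$: it either falls within (the copy of) $L_1$ or within $L_2$. In the first case $L_1=A+L_1'$ and $B=L_1'+L_2$, so $P(L_1)$ gives $A,L_1'\in\LOfp$ and hence $A\in\LOfp$, $B=L_1'+L_2\in\LOfp$. The second case is symmetric using $P(L_2)$ and $A=L_1+A'$. Thus $P(L_1+L_2)$ holds.

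The heart of the argument is closure under $\omega\times(\mbox{-})$, the case $\omega^*\times(\mbox{-})$ being dual. Here the key step is a description of the prefixes of $\omega\times L=\sum_{n\in\omega}L$: if $A+B=\omega\times L$ with $A$ a proper nonempty prefix, then there is a least index $k$ whose copy of $L$ is not entirely contained in $A$; all earlier copies lie in $A$, and all later copies lie in $B$ by downward-closedness, since some element of copy $k$ is excluded. Writing $L=M'+M''$ for the induced splitting of copy $k$, I get $A\cong\underbrace{L+\cdots+L}_{k}+M'$ and, after reindexing the remaining copies as an $\omega$-sum, $B\cong M''+\omega\times L$. Applying $P(L)$ to $L=M'+M''$ yields $M',M''\in\LOfp$, so $A$ (a finite sum of copies of $L$ followed by $M'$) and $B=M''+\omega\times L$ both lie in $\LOfp$; the degenerate cases where $A$ is empty or all of $\omega\times L$ are trivial. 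Dually, a prefix $A$ of $\omega^*\times L$ has the form $A\cong\omega^*\times L+M'$ with $B\cong M''+\underbrace{L+\cdots+L}_{k}$ and $L=M'+M''$, and the same reasoning applies.

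The main obstacle I anticipate is making this prefix-structure analysis of $\omega\times L$ (and its dual) fully rigorous: one must justify that a proper prefix meets only finitely many copies of $L$ fully and is captured within a single boundary copy, and then handle the reindexing so that the trailing copies again form an $\omega$- (resp. $\omega^*$-) indexed sum. Once this combinatorial description of cuts is established, the closure properties follow mechanically from the inductive hypothesis $P(L)$ together with the closure of $\LOfp$ under finite sums and under $\omega^{\pm}\times(\mbox{-})$.
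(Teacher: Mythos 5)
Your proof is correct, but it runs on a genuinely different induction scheme from the paper's. The paper proves the proposition by induction on the Hausdorff rank $\rk{L}$: it invokes Proposition \ref{3STLOfpcorr} to present $L$ as $\LIN{T}{s_T}$ for a 3-signed tree $(T,s_T)$ and splits on the width of the root--the width-one case is your $\omega^{\pm}\times(\mbox{-})$ case (there the paper writes $\mathbf{n}\times\widetilde{L}\cong L_1+L_1'$ and $L_2\cong L_1'+\omega\times\widetilde{L}$, and applies the rank induction hypothesis to the whole finite block $\mathbf{n}\times\widetilde{L}$, using $\rk{\mathbf{n}\times\widetilde{L}}=\rk{\widetilde{L}}<\rk{L}$), while the width-greater-than-one case is your closure-under-$+$ case. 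You instead perform structural induction directly on the inductive definition of $\LOfp$ (Definition \ref{fp}): you strengthen the statement to allow empty summands, check it for $\mathbf 0,\mathbf 1$, show the class of orders satisfying it is isomorphism-closed and closed under $+$ and $\omega^{\pm}\times(\mbox{-})$, and conclude by minimality. The combinatorial core is identical in both proofs: a proper cut of an $\omega$-sum lands inside a single boundary copy, with finitely many full copies before it and a reindexed $\omega$-sum after it. What your route buys: it is self-contained, needing neither the 3ST correspondence nor any rank computations, and applying the hypothesis only to the split boundary copy is slightly leaner. What the paper's route buys: it reuses machinery (3-signed trees, rank) that the paper develops anyway, and rank induction is the scheme that recurs in the subsequent results, e.g.\ Proposition \ref{derfp}. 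The cut-analysis step you flag as the main obstacle is indeed the only point requiring care, and it is a routine verification (a proper prefix of $\sum_{n\in\omega}L$ cannot meet copies of arbitrarily large index, since prefixes are downward closed).
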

\begin{proof}
Let $L\in\LOfp$ be non-empty. We prove the result using induction on $\rk{L}$. Proposition \ref{3STLOfpcorr} yields a 3ST $(T,s_T)$ such that $L\cong\LIN{T}{s_T}$ and $\rk{L}=\hgt{T}-1$. Let $w:=w(T;\emptyset)$.

If $\rk{L}=0$ then the conclusion is obvious. On the other hand if $\rk{L}>0$ then there are two cases.
\begin{itemize}
    \item[$(w=1)$] Here $L\cong\omega^{s_T(0)}\times\LIN{\widehat{T}_0}{s_{\widehat{T}_0}}$. If $\widetilde{L}:=\LIN{\widehat{T}_0}{s_{\widehat{T}_0}}$ then $\widetilde{L}\in\LOfp$. Without loss assume that $s_T(0)=+$. Then there exists some $n\in\omega$ and linear order $L_1'$ such that $\mathbf{n}\times\widetilde{L}\cong L_1+L_1'$ and $L_2\cong L_1'+\omega\times\widetilde{L}$. Since $\rk{\mathbf{n}\times\widetilde{L}}=\rk{\widetilde{L}}<\rk L$, we have $L_1,L_1',L_2\in\LOfp$ by the induction hypothesis.
    \item[$(w>1)$] Let $L'_i:=\LIN{T_i}{s_{T_i}}$ for $0\leq i<w$. Then $L'_i\in\LOfp$ and $L\cong L'_0+L'_1+\cdots+L'_{w-1}$. We can find an integer $0\leq j<w$ and linear orders $\widetilde{L}'_j$ and $\widetilde{L}''_j$ such that $L_1=L'_0+\cdots+L'_{j-1}+\widetilde{L}'_j$, $L_2=\widetilde{L}''_j+L'_{j+1}+\cdots+L'_{w-1}$, and $L'_j=\widetilde{L}'_j+\widetilde{L}''_j$. Since $w(T_j;\emptyset)=1$ from the above case we conclude that $\widetilde{L}'_j,\widetilde{L}''_j\in\LOfp$. Thus $L_1,L_2\in\LOfp$.
\end{itemize}
\end{proof}

\begin{proposition}\label{derfp}
If $L\in\LOfp$ then $L^{(1)}\in\LOfp$.
\end{proposition}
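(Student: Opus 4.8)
The plan is to prove the statement by induction on the construction of $L$ as a finitely presented order. Concretely, set $S:=\{L\in\LOfp:L^{(1)}\in\LOfp\}$; I will show that $S$ contains $\mathbf 0,\mathbf 1$ and is closed under the three operations of Definition \ref{fp}, so that $\LOfp\subseteq S$ by minimality. The base case is immediate, since $\mathbf 0^{(1)}=\mathbf 0$ and $\mathbf 1^{(1)}=\mathbf 1$. Throughout I use freely that the derivative is an isomorphism invariant and that a finite nonempty order has derivative $\mathbf 1$.

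For the sum case, suppose $L_1,L_2\in S$, so $L_1^{(1)},L_2^{(1)}\in\LOfp$. Remark \ref{sumderivative} gives two possibilities. If $L_1$ is unbounded above or $L_2$ is unbounded below, then $(L_1+L_2)^{(1)}=L_1^{(1)}+L_2^{(1)}\in\LOfp$. Otherwise $(L_1+L_2)^{(1)}$ is obtained from $L_1^{(1)}+L_2^{(1)}$ by identifying the maximum of $L_1^{(1)}$ with the minimum of $L_2^{(1)}$; writing $L_1^{(1)}=A+\mathbf 1$ and $L_2^{(1)}=\mathbf 1+B$ (Proposition \ref{summandfp} ensures $A,B\in\LOfp$, with the convention that $A$ or $B$ is $\mathbf 0$ if the corresponding derivative is a single point), this order is $A+\mathbf 1+B\in\LOfp$. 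Either way $L_1+L_2\in S$.

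The product case is the crux. Let $L\in S$ and consider $\omega\times L$ (the order $\omega^*\times L$ is handled dually, using that $\LOfp$ is closed under reversal and that $(L^*)^{(1)}\cong(L^{(1)})^*$). If $L$ is empty then $\omega\times L=\mathbf 0$, and if $L$ is finite nonempty then $\omega\times L\cong\omega$; so assume $L$ is infinite. Writing $\omega\times L=\sum_{n\in\omega}L_n$ with each $L_n\cong L$, the only $\sim_1$-collapsing between distinct copies can occur between the top class of $L_n$ and the bottom class of $L_{n+1}$, and by Remark \ref{sumderivative} this happens precisely when $L$ has both a maximum and a minimum. Hence two subcases. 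If $L$ has no maximum or no minimum, no copies merge and $(\omega\times L)^{(1)}\cong\omega\times L^{(1)}\in\LOfp$. If $L$ has both, then $|L^{(1)}|\geq 2$ (an infinite $\sim_1$-class embeds in $\Z$ and so cannot have both endpoints), so write $L^{(1)}=\mathbf 1+M+\mathbf 1$, the extreme points being the bottom and top classes. Tracking the merges, the top class of each $L_n$ fuses with the bottom class of $L_{n+1}$ into one class, giving
$$(\omega\times L)^{(1)}\cong\mathbf 1+M+\omega\times(\mathbf 1+M).$$
By Proposition \ref{summandfp}, $M\in\LOfp$, so the right-hand side lies in $\LOfp$. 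In all cases $\omega\times L\in S$, completing the induction.

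The main obstacle is exactly this product case: the bookkeeping of which $\sim_1$-classes of adjacent copies of $L$ merge depends delicately on whether $L$ is bounded, and one must recover the correct order $\mathbf 1+M+\omega\times(\mathbf 1+M)$ rather than the naive $\omega\times L^{(1)}$. The clean extraction of the middle block $M$ as a finitely presented order is what forces the use of Proposition \ref{summandfp}; the dual product $\omega^*\times L$ then needs only the two routine observations that $\LOfp$ is closed under reversal and that the derivative commutes with reversal.
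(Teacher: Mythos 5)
Your proof is correct, but it runs on a different induction than the paper's. You perform a structural induction on the presentation (the minimality/smallest-class argument for $S=\{L\in\LOfp: L^{(1)}\in\LOfp\}$), whereas the paper inducts on the Hausdorff rank and splits into the cases ``$L$ irreducible'' versus ``$L$ a finite sum of irreducibles.'' The ingredients overlap heavily -- both proofs lean on Remark \ref{sumderivative} to control merging of $\sim_1$-classes at the seams and on Proposition \ref{summandfp} to extract the truncated pieces as finitely presented orders -- but where the paper simply cites Proposition \ref{irredderiv} to handle $\omega^\delta\times\widetilde L$, you re-derive that computation inline: your split into ``$L$ lacks an endpoint'' (giving $(\omega\times L)^{(1)}\cong\omega\times L^{(1)}$) versus ``$L$ has both endpoints'' (giving $\mathbf 1+M+\omega\times(\mathbf 1+M)$, which is just $\omega\times(\mathbf 1+M)$, with $M$ the middle block of $L^{(1)}=\mathbf 1+M+\mathbf 1$) is essentially the content of Proposition \ref{irredderiv} specialized to this setting. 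What the paper's route buys is brevity given its earlier results, since the derivative-of-an-irreducible computation is already done; what your route buys is self-containment and the induction scheme that is most natural for an inductively defined class, at the modest cost of two extra routine observations (closure of $\LOfp$ under reversal and compatibility of the derivative with reversal) for the $\omega^*$ case, which the paper dispatches with ``a similar argument holds if $\delta=-$.'' One small point of care in your unbounded subcase: Remark \ref{sumderivative} is stated for binary sums, so the identification $(\sum_{n\in\omega}L_n)^{(1)}\cong\omega\times L^{(1)}$ requires the (easy, and correctly sketched by you) observation that under the no-merge hypothesis every $\sim_1$-class of the $\omega$-sum lies inside a single summand; the paper's own proof of Proposition \ref{irredderiv} sidesteps infinite iteration via the identity $\omega\times L'\cong L_1+\omega\times(L_2+L_1)$, but your direct argument is sound.
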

\begin{proof}
We will prove the result using induction on $\rk{L}$.

If $\rk{L}\leq1$ then $L^{(1)}$ is finite and hence finitely presented. On the other hand if $\rk{L}>1$ then there are two cases.

\noindent{\textbf{Case I:}} $L$ is irreducible.

Here $L\cong\omega^\delta\times\widetilde{L}$ for some $\delta\in\{+,-\}$. If $\delta=+$ then Proposition \ref{irredderiv} gives that $L^{(1)}\cong\omega\times \widetilde{L}^{(1r)},$ where $\widetilde{L}^{1r}=\widetilde{L}^{(1)}$ if $\widetilde{L}$ is not bounded and $\widetilde{L}^{(1r)}+1=\widetilde{L}^{(1)}$ if $\widetilde{L}$ is bounded.

Since $\rk{\widetilde{L}}<\rk{L}$, $\widetilde{L}^{(1)}\in\LOfp$ by induction hypothesis, and then $\widetilde{L}^{(1r)}\in\LOfp$ by Proposition \ref{summandfp}. Hence $L^{(1)}\in\LOfp$ by the definition of the class $\LOfp$.

A similar argument holds if $\delta=-$.

\noindent{\textbf{Case II:}} $L$ is not irreducible.

Here $L=L_1+L_2+\cdots+L_n$, where each $L_i\in\LOfp$ is irreducible. Using Remark \ref{sumderivative} we have $$L^{(1)}=L_1^{(1r)}+L_2^{(2r)}+\cdots+L_n^{(1)},$$ where for $1\leq i\leq n-1$ we set $$L_i^{(1r)}:=\begin{cases}L_i^{(1)}-\{\max{L_i^{(1)}}\}&\mbox{if }\max{L_i}\mbox{ and }\min{L_{i+1}}\mbox{ exist};\\L_i^{(1)}&\mbox{otherwise}.\end{cases}$$

By Case I and Proposition \ref{summandfp} each $L_i^{(1)},L_i^{(1r)}\in\LOfp$. Hence $L^{(1)}\in\LOfp$ by the definition of the class $\LOfp$.
\end{proof}

We end this section by noting an interesting observation.
\begin{proposition}\label{****}
Suppose $L,L'\in\LOfp$ and $\rk L=\rk{L'}$. If $\omega\times L$ is a prefix of $\omega\times L'$ then $\omega\times L\cong\omega\times L'$. Dually if $\omega^*\times L$ is a suffix of $\omega^*\times L'$ then $\omega^*\times L\cong\omega^*\times L'$.
\end{proposition}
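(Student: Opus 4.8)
The plan is to prove the first statement (the second follows by the obvious order-reversing duality). Suppose $\omega\times L$ is a prefix of $\omega\times L'$, so that $\omega\times L'\cong(\omega\times L)+M$ for some linear order $M$. Since $\omega\times L'$ is indecomposable by Proposition \ref{indec}, and since $\omega\times L'\preceq(\omega\times L)+M$, indecomposability forces either $\omega\times L'\preceq\omega\times L$ or $\omega\times L'\preceq M$. I would first rule out the second alternative: if $\omega\times L'\preceq M$ then, combining with $\omega\times L\preceq\omega\times L'$, one gets $\omega\times L'\cong(\omega\times L)+M$ with $M$ already containing a copy of the whole order, which I expect to contradict the rank bookkeeping below. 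The cleaner route is to extract from $\omega\times L'\preceq\omega\times L$ together with the evident $\omega\times L\preceq\omega\times L'$ the equimorphism $\omega\times L\sim\omega\times L'$, and then upgrade equimorphism to isomorphism using the prefix hypothesis and the equality of ranks.

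The key technical step is to analyze the prefix relation at the level of the $\sim_1$-derivative. Writing $\omega\times L'\cong(\omega\times L)+M$, I would apply Remark \ref{sumderivative}: since $\omega\times L$ is not bounded above (its copies of $L$ cofinally ascend), the derivative splits as $(\omega\times L')^{(1)}\cong(\omega\times L)^{(1)}+M^{(1)}$, exhibiting $(\omega\times L)^{(1)}$ as a prefix of $(\omega\times L')^{(1)}$. By Proposition \ref{irredderiv} both derivatives are again of the form $\omega\times(-)$, namely $(\omega\times L)^{(1)}\cong\omega\times L_1$ and $(\omega\times L')^{(1)}\cong\omega\times L_1'$ for suitable $L_1,L_1'$, and by Proposition \ref{derfp} these lie in $\LOfp$. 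The rank drops by exactly one on each side, so $\rk{L_1}=\rk{L_1'}$, and $\omega\times L_1$ is a prefix of $\omega\times L_1'$. This sets up an induction on $\rk L$: the inductive hypothesis yields $(\omega\times L)^{(1)}\cong(\omega\times L')^{(1)}$.

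From here I would descend to the isomorphism $\omega\times L\cong\omega\times L'$ by reconstructing each order from its derivative together with the data of its $\sim_1$-classes. Each $\sim_1$-class in $\omega\times L$ is itself a maximal convex subset along which the relation $\sim_0$ is refined finitely, and an isomorphism of derivatives that respects the prefix structure should lift to an isomorphism of the original orders once one checks that the corresponding classes are themselves isomorphic. The base case $\rk L\le 1$ is handled directly: then $L,L'$ are finite (both of rank $0$) or the orders $\omega\times L,\omega\times L'$ have rank $1$ and the prefix relation between two orders of the form $\omega\times(\text{finite})$ is easily seen to force equality of the finite factors, hence isomorphism.

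The main obstacle I anticipate is the lifting step in the last paragraph: equimorphism together with agreement of derivatives does not automatically give isomorphism, and the subtlety is precisely that profoundly different orders can share an equimorphism class, as the paper stresses with $\omega\times(\omega+\omega^*)\sim\omega\times(\omega^*+\omega)$. The prefix hypothesis is what should break this symmetry, since a prefix relation is far more rigid than mere equimorphism; the crux is to show that the finitely-presented structure (via Proposition \ref{summandfp} and Proposition \ref{derfp}) propagates the prefix relation faithfully through the derivative, so that at each stage one really recovers the factors up to isomorphism rather than just up to equimorphism. I would expect the careful handling of boundary $\sim_1$-classes — exactly the endpoint identifications flagged in Remark \ref{sumderivative} — to be where the argument needs the most attention.
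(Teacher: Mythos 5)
Your proposal has a genuine gap, and it sits exactly where you yourself flag it: the ``lifting'' step from $(\omega\times L)^{(1)}\cong(\omega\times L')^{(1)}$ back to $\omega\times L\cong\omega\times L'$. As you correctly observe, isomorphism of first derivatives does not imply isomorphism of the original orders (e.g.\ $\omega\times(\omega+\omega^*)$ and $\omega\times(\omega^*+\omega)$ both have derivative isomorphic to $\omega$ yet are not isomorphic), so your induction proves nothing until this step is supplied; saying the prefix hypothesis ``should break the symmetry'' is a restatement of the proposition, not an argument. Two further loose ends: you never actually rule out the alternative $\omega\times L'\preceq M$ in your indecomposability split (you defer it to ``rank bookkeeping below'' that never materializes), and your base case is incorrect as stated, since $\omega\times\mathbf{n}\cong\omega$ for every finite $n\geq 1$, so a prefix relation between orders of the form $\omega\times(\text{finite})$ certainly does not force equality of the finite factors (the desired conclusion $\omega\times L\cong\omega\times L'$ does hold there, but trivially and not for the reason you give).

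What the proposal misses is the single observation that makes all of this machinery unnecessary, and it is the entire content of the paper's proof: if $\omega\times L$ were a \emph{proper} prefix of $\omega\times L'=\sum_{n\in\omega}L'$, then some element of $\omega\times L'$ would lie outside it, and since a prefix is an initial segment, $\omega\times L$ would already be a prefix of the finite partial sum $\mathbf{p}\times L'$ for some $p\geq 1$. Rank is monotone under embeddings, so this yields
$$\rk{L}+1=\rk{\omega\times L}\leq\rk{\mathbf{p}\times L'}=\rk{L'}=\rk{L},$$
a contradiction. Hence the prefix cannot be proper, i.e.\ $\omega\times L\cong\omega\times L'$ outright. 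Note that this argument uses the hypothesis $\rk{L}=\rk{L'}$ exactly once, requires none of indecomposability, equimorphism, or derivatives (Propositions \ref{indec}, \ref{irredderiv}, \ref{derfp}), and avoids entirely the derivative-to-order lifting problem on which your approach founders.
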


\begin{proof}
Suppose $\omega\times L$ is a prefix of $\omega\times L'$. If $\omega\times L$ is a proper prefix of $\omega\times L'$ then in fact $\omega\times L$ is a proper prefix of $\mathbf{p}\times L'$ for some $p\geq 1$. This is clearly a contradiction since $\rk{\omega\times L}=\rk{L}+1>\rk{L'}=\rk{\mathbf{p}\times L'}$. Hence the proof.
\end{proof}

\section{L-equivalence on $3\ST_\omega$}\label{leq3st}
In this section we introduce two constructions on $3$-signed trees in a way that does not change $\mathrm{LIN}$.

For all linear orders $L_1,L_2,\hdots,L_n,$ the following identities hold.
\begin{equation*}
    \begin{split}
       \omega\times(L_1+L_2+\hdots+L_n)&\cong L_1+\omega\times(L_2+\hdots+L_n+L_1),\\
       \omega^*\times(L_1+L_2+\hdots+L_n)&\cong\omega^*\times(L_n+L_1+\hdots+L_{n-1})+L_n,
    \end{split}
\end{equation*}
This motivates the following construction.

\begin{construction}
Suppose $(T,s_T)\in3\ST_\omega$ and $\sigma\in T$ is a non-root non-leaf vertex. Thus $s_T(\sigma)\neq0$. Suppose $m:=w(T;\pi(\sigma))$, $n:=w(T;\sigma)$, $p:=(\frac{s_T(\sigma)-1}{2})\ (\mathrm{mod}\ n)$ and $\sigma=\pi(\sigma)*k$ for some $k<m$. Define $g(i):=(i+s_T(\sigma))\ (\mathrm{mod}\ n)$ for $0\leq i<n$.

We construct a new $3$-signed tree $\mathsf{EXUDE}((T,s_T);\sigma)$, that has either the first or the last child subtree of $\sigma$ exuded out depending on its sign, as below.
\begin{enumerate}
    \item Suppose $(T',s_{T'})$ is a copy of $(T,s_T)$.
    \item Reassign $(T'_{\sigma*i},s_{T'_{\sigma*i}})$ to be a copy of $(T_{\sigma*g(i)},s_{T_{\sigma*g(i)}})$ for $i<n$.
    \item Set $(T'_{\pi(\sigma)*m},s_{T'_{\pi(\sigma)*m}})$ to be a copy of $(T_{\sigma*p},s_{T_{\sigma*p}})$.
    \item Permute in a cyclic order with step-size $1$ the induced subtrees $(T'_{\pi(\sigma)*i},s_{T'_{\pi(\sigma)*i}})$ with indices between $k\leq i\leq m$ when $s_T(\sigma)=+$, and $k<i\leq m$ when $s_T(\sigma)=-$.
    \item Set $\mathsf{EXUDE}((T,s_T);\sigma):=(T',s_{T'})$.
\end{enumerate}
\end{construction}
It is readily verified that $\LIN{T}{s_T}\cong\mathrm{LIN}(\mathsf{EXUDE}((T,s_T);\sigma))$.

For a linear order $L$, $m\geq 1$ and $\delta\in\{+,-\}$, we also have $$\omega^\delta\times L\cong\omega^\delta\times(\mathbf{m}\times L).$$
This motivates the following construction.
\begin{construction}
Suppose $(T,s_T)\in3\ST_\omega$, $\sigma\in T$ is a non-root non-leaf vertex, and $m\geq 1$. Thus $s_T(\sigma)\neq0$. Let $n:=w(T;\sigma)$.

We construct a new $3$-signed tree $m\mbox{-}\mathsf{REPL}((T,s_T);\sigma)$, that has $m$ copies of the children of $\sigma$, as below.

\begin{enumerate}
    \item Suppose $(T',s_{T'})$ is a copy of $(T,s_T)$.
    \item For $i<n$ and $1\leq j<m$, set $(T'_{\sigma*(jn+i)},s_{T'_{\sigma*(jn+i)}})$ to be a copy of $(T_{\sigma*i},s_{T_{\sigma*i}})$.
    \item Set $m\mbox{-}\mathsf{REPL}((T,s_T);\sigma):=(T',s_{T'})$.
\end{enumerate}
\end{construction}
Again it is readily verified that $\LIN{T}{s_T}\cong\mathrm{LIN}(m\mbox{-}\mathsf{REPL}((T,s_T);\sigma))$.

\begin{definition}
Define a relation $\approx_L$ on $3\ST_\omega$ as follows.

For $(T,s_T)\in3\ST_\omega$, $\sigma\in T$ with $s_T(\sigma)\neq0$, and $m\geq 1$,
\begin{itemize}
    \item $(T,s_T)\approx_L\mathsf{EXUDE}((T,s_T);\sigma)$;
    \item $(T,s_T)\approx_L m\mbox{-}\mathsf{REPL}((T,s_T);\sigma)$.
\end{itemize}
Say that \emph{L-equivalence} on $3\ST_\omega$ is the equivalence relation generated by $\approx_L$, which we again denote by $\approx_L$.
\end{definition}

\begin{rmk}\label{Leqcong}
L-equivalence is a congruence relation on $3\ST_\omega$. Suppose $(T,s_T)$, $(T',s_{T'})$ and $(T'',s_{T''})$ are $3\ST$s and $\delta\in\{1,-1\}$. If $(T',s_{T'})\approx_L(T'',s_{T''})$ then
\begin{itemize}
    \item $T\curlyvee T'\approx_L T\curlyvee T''$ and $T'\curlyvee T\approx_L T''\curlyvee T$;
    \item ${T'}^\delta\approx_L{T''}^\delta$.
\end{itemize}
\end{rmk}

We obviously have the following.
\begin{proposition}\label{LequivgivesLINiso}
Suppose $(T,s_T),(T',s_{T'})\in3\ST_\omega$ and $(T,s_T)\approx_L(T',s_{T'})$. Then $\LIN{T}{s_T}\cong\LIN{T'}{s_{T'}}$.
\end{proposition}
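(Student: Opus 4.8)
The plan is to prove Proposition \ref{LequivgivesLINiso} by reducing to the single generating step of $\approx_L$. Since L-equivalence is defined as the equivalence relation generated by the relation that performs one $\mathsf{EXUDE}$ or one $m\text{-}\mathsf{REPL}$ move, it suffices to show that $\mathrm{LIN}$ is invariant under each single generating move, and then invoke the fact that an isomorphism-valued invariant that is preserved by a generating relation is automatically preserved by the equivalence relation it generates. Concretely, I would first observe that $\cong$ is itself an equivalence relation on linear orders, so the set of pairs $((T,s_T),(T',s_{T'}))$ with $\LIN{T}{s_T}\cong\LIN{T'}{s_{T'}}$ is reflexive, symmetric and transitive; hence if it contains every generating pair of $\approx_L$, it contains all of $\approx_L$.

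The heart of the proof is therefore to verify that a single move preserves $\mathrm{LIN}$. Both verifications have in fact already been recorded in the text: immediately after the $\mathsf{EXUDE}$ construction it is noted that $\LIN{T}{s_T}\cong\mathrm{LIN}(\mathsf{EXUDE}((T,s_T);\sigma))$, and likewise after the $m\text{-}\mathsf{REPL}$ construction that $\LIN{T}{s_T}\cong\mathrm{LIN}(m\text{-}\mathsf{REPL}((T,s_T);\sigma))$. So for each of the two generating relations $(T,s_T)\approx_L\mathsf{EXUDE}((T,s_T);\sigma)$ and $(T,s_T)\approx_L m\text{-}\mathsf{REPL}((T,s_T);\sigma)$, the required isomorphism of $\mathrm{LIN}$-images is exactly one of these two already-verified identities. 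These identities in turn rest on the two displayed algebraic facts about linear orders that motivated the constructions, namely the cyclic-shift identities $\omega\times(L_1+\cdots+L_n)\cong L_1+\omega\times(L_2+\cdots+L_n+L_1)$ (and its $\omega^*$ dual) for $\mathsf{EXUDE}$, and $\omega^\delta\times L\cong\omega^\delta\times(\mathbf m\times L)$ for $m\text{-}\mathsf{REPL}$.

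In writing this out I would give the reduction argument explicitly but keep the per-move verification short, since those two isomorphisms are the content of the remarks following the constructions and need only be cited; the local recursion defining $\mathrm{LIN}(T,s_T,\sigma)$ shows that altering the subtree at $\sigma$ (or its children) according to either move changes the linear order assigned at $\sigma$ only up to the above isomorphisms, and the surrounding context — the factor $\omega^{s_T(\pi(\sigma))}$ and the sum over the other children of $\pi(\sigma)$ — is applied identically in both trees, so the isomorphism at $\sigma$ propagates up to an isomorphism of the full orders $\LIN{T}{s_T}$ and $\LIN{T'}{s_{T'}}$. The main (and only mild) obstacle I anticipate is bookkeeping in the $\mathsf{EXUDE}$ case: that construction moves a child subtree of $\sigma$ out to become a new child of $\pi(\sigma)$ and cyclically permutes the remaining children, so one must check that the resulting expression for $\mathrm{LIN}$ at $\pi(\sigma)$ matches the original after applying the cyclic-shift identity, keeping careful track of the two sign cases $s_T(\sigma)=+$ and $s_T(\sigma)=-$ and of the index arithmetic modulo $n$. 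Since the paper has already asserted the per-move invariance, the cleanest route is to state the reduction, cite the two invariance remarks, and conclude.
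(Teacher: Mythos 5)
Your proposal is correct and matches the paper's approach: the paper treats this proposition as immediate (``We obviously have the following''), resting exactly on the two per-move invariance facts recorded after the $\mathsf{EXUDE}$ and $m\mbox{-}\mathsf{REPL}$ constructions, together with the observation that isomorphism is an equivalence relation and so contains the equivalence relation generated by the moves. Your additional remark about the isomorphism at $\sigma$ propagating up through the recursion defining $\mathrm{LIN}$ is precisely the content of the paper's ``readily verified'' claims, so nothing is missing.
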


\section{Bounded discrete finitely presented linear orders}\label{secBDFPLO}
Let $\dLOfpb$ denote the subclass of $\LOfp$ consisting of bounded discrete finitely presented linear orders. Recall that we defined in \S\ref{sectAST} a subclass of finite signed trees, namely the alternating signed trees ($\AST$) which corresponded to the class of indecomposable discrete linear orders up to equimorphism (Theorem \ref{dILOAST}). In this section we define alternating 3-signed trees, which will be a subclass of 3-signed trees and show in Theorem \ref{ATSTconverse} that such trees correspond to the class of bounded discrete finitely presented linear orders up to isomorphism (cf. Proposition \ref{3STLOfpcorr}).
\begin{definition}
Say that $(T,s_T) \in 3\ST_\omega$ is an \emph{alternating 3-signed tree} if for each non-leaf vertex $\sigma\in T$ exactly one of the following holds:
\begin{itemize}
    \item $s_T(\sigma*x)=0$ for each $x<w(T;\sigma)$;
    \item $w(T;\sigma)$ is even and $s_T(\sigma*x)=(-1)^x$ for each $x<w(T;\sigma)$.
\end{itemize}
\end{definition}

We denote the subclass of $3\ST_\omega$ consisting of alternating 3-signed trees by $\ATST$.

\begin{theorem}\label{ATSTconverse}
Suppose $L\in\LOfp$. Then $L\in\dLOfpb$ if and only if there is $(T,s_T)\in\ATST$ such that $\LIN{T}{s_T}\cong L$.
\end{theorem}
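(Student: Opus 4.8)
The statement is a biconditional, and both directions will be proved by induction on the height of the tree (equivalently, on the Hausdorff rank), mirroring the structure of the proofs of Theorems \ref{t1} and \ref{dILOAST}. The forward-in-difficulty direction, that every $\ATST$ tree yields a bounded discrete finitely presented order, is the easier half and closely parallels Theorem \ref{t1}. First I would observe that for $(T,s_T)\in\ATST$ the class $\LOfp$ membership is automatic from Proposition \ref{3STLOfpcorr}, so the only content is boundedness and discreteness. I would induct on $\hgt{T}$: at each non-leaf vertex $\sigma$, the alternating condition forces either all children to have sign $0$ (so the subtree at $\sigma$ assembles a finite sum of already-bounded-discrete pieces, which is again bounded and discrete) or an even number of children with strictly alternating signs $+,-,+,-,\dots$. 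In the latter case, exactly as in the proof of Theorem \ref{t1}, each adjacent pair $\omega\times L_{2i}+\omega^*\times L_{2i+1}$ is bounded and discrete because the first summand is unbounded above and bounded below while the second is unbounded below and bounded above, and then finite sums preserve this. Applying $\omega^{s_T(\sigma)}\times(-)$ to a bounded discrete order again yields a bounded discrete order when the argument is bounded, closing the induction.

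\textbf{The converse direction.}
For the harder direction I am given $L\in\dLOfpb$ and must produce an $\ATST$ tree. The natural tool is Proposition \ref{p2}, which says a bounded discrete $L$ with a maximum and minimum decomposes as $L\cong\omega+L'\times\Z+\omega^*$ for some $L'$; by Proposition \ref{summandfp} (applied to strip off the $\omega$-prefix and $\omega^*$-suffix) we get $L'\in\LOfp$, and by Proposition \ref{derfp}-style reasoning $L'$ has strictly smaller rank. I would then want to decompose $L'$ into finitely many indecomposable finitely presented pieces using Theorem \ref{t2} and Proposition \ref{summandfp}, handle each indecomposable piece $L_j'$ by the inductive hypothesis applied to $L_j'\times\Z$ (which is bounded discrete of rank one higher, just as in the proof of Theorem \ref{dILOAST}), and then assemble the resulting $\ATST$ subtrees under a common root. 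The key structural fact I would lean on is the equimorphism-level construction from Theorem \ref{dILOAST}, but upgraded from $\sim$ to $\cong$: here the constructions $\mathsf{EXUDE}$ and $m\mbox{-}\mathsf{REPL}$ (via Proposition \ref{LequivgivesLINiso}) become essential, since they let me adjust a candidate tree without changing the isomorphism type of $\LIN{T}{s_T}$, allowing me to force the signs of children into the required strictly-alternating pattern.

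\textbf{Main obstacle.}
The principal difficulty is precisely the gap between equimorphism and isomorphism. In Theorem \ref{dILOAST} the editing step that inserts extra children to enforce alternation only preserves the equimorphism class; to get genuine isomorphism I cannot freely add dummy vertices. Instead I expect to need $\mathsf{EXUDE}$ to peel off the boundary copies of $\omega$ and $\omega^*$ demanded by Proposition \ref{p2} exactly, and $m\mbox{-}\mathsf{REPL}$ together with Proposition \ref{****} to reconcile any mismatch in the number of alternating child-blocks without altering $\LIN{T}{s_T}$ up to isomorphism. Managing the boundary-contribution bookkeeping—tracking precisely when an $\omega$ or $\omega^*$ is absorbed at the endpoints versus when it must be recorded as a genuine alternating child, and ensuring the root sign and the leaf/sign-$0$ conventions of a $3ST$ are respected throughout—will be the delicate part of the argument. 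The induction on rank should otherwise go through routinely once this boundary accounting is set up correctly.
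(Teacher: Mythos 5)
Your forward direction is fine: the paper treats it as immediate, and your elaboration along the lines of Theorem \ref{t1} is correct. The converse, however, has a genuine gap, located exactly at your inductive step. You propose to decompose $L'$ into indecomposable pieces $L'_j$ and apply the inductive hypothesis to $L'_j\times\Z$, ``which is bounded discrete of rank one higher, just as in the proof of Theorem \ref{dILOAST}''. This fails on two counts. First, $L'_j\times\Z$ has no maximum or minimum, so it does not lie in $\dLOfpb$ at all, and the statement being proved says nothing about it. Second, even if you repair this by passing to $\omega+L'_j\times\Z+\omega^*$, the rank does not drop: $\rk{L'_j\times\Z}=\rk{L'_j}+1$, and for the piece of maximal rank this equals $\rk{L'}+1=\rk{L}$. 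In the typical case where $L'$ is itself indecomposable one has $k=1$ and $\omega+L'_1\times\Z+\omega^*\cong L$, so your inductive step invokes the theorem for $L$ itself. Theorem \ref{dILOAST} escapes this trap only because, working up to equimorphism, it decomposes the indecomposable $L'$ one level further, as $\omega^{\delta}\times(L_0+\cdots+L_{n-1})$ with $\rk{L_x}<\rk{L'}$, via Propositions \ref{ILO} and \ref{linLIN}; that machinery is equimorphism-only, and you supply no isomorphism-level substitute.

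The second, related, problem is that the mechanism which actually produces alternation up to isomorphism is absent from your plan: it is deferred to ``boundary bookkeeping'' with $\mathsf{EXUDE}$ and $m\mbox{-}\mathsf{REPL}$, but that is precisely where essentially all of the work in the paper's proof lives. The paper does not induct on rank at all. It takes an arbitrary 3ST $(T',s_{T'})$ with $\LIN{T'}{s_{T'}}\cong L'$, duplicates every leaf into an adjacent pair of vertices signed $-,+$ (so each leaf's contribution $\mathbf 1$ becomes $\omega^*+\omega\cong\Z$, and the resulting tree computes exactly $L'\times\Z$), and then runs a height-stratified sequence of $\mathsf{EXUDE}$ operations---isomorphism-invariance being Proposition \ref{LequivgivesLINiso}---establishing by induction on height the invariants $(A_i)$ and $(B_i)$: all proper subtrees of height at most $i$ are alternating, and children of vertices of subtree-height $i+1$ alternate $-,+,\dots,-,+$. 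Alternation is thus seeded at the leaves by the $\Z$-duplication and propagated upward; finally the tree is sandwiched as $T_1\curlyvee T^{h-1}\curlyvee T_2$ to supply the outer $\omega$ and $\omega^*$. If you insist on a rank induction, the statement must be strengthened to: every $M\times\Z$ with $M\in\LOfp$ admits a tree whose root children alternate $-,+,\dots,-,+$ and whose proper subtrees are alternating. Then for an irreducible summand $M_j\cong\omega\times N_j$ one has $\rk{N_j}<\rk{M}$, and the identity $\omega\times(A_1+\cdots+A_{2p})\cong A_1+\omega\times(A_2+\cdots+A_{2p}+A_1)$ applied to the inductive tree for $N_j\times\Z$ restores the required sign pattern. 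As written, your proposal contains neither this strengthening nor the paper's leaf-duplication and upward-propagation construction, so the hard half of the theorem remains unproved.
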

\begin{proof}
If $(T,s_T)\in\ATST$ then clearly $\LIN{T}{s_T}\in\dLOfpb$.

For the other direction suppose $L\in\dLOfpb$. Then Proposition $\ref{p2}$ gives that $L\cong\omega + L'\times\Z + \omega^*$ for some linear order $L'$. Since $L$ is finitely presented so is $L'$ by Propositions \ref{summandfp} and \ref{derfp}. Then Proposition \ref{3STLOfpcorr} yields $(T',s_{T'}) \in 3\ST_\omega$ such that $L'\cong\LIN{T'}{s_{T'}}$. We construct another tree $(T,s_T)$ such that $\LIN{T}{s_T}\cong L'\times\Z$ as follows.

Starting with $(T,s_T)$ as a copy of $(T',s_{T'})$, duplicate all leaf vertices while ensuring that such duplicate copies are adjacent to each other and the relative position of each of the duplicate copies with respect to its siblings is unaltered. If $\sigma*x$ and $\sigma*(x+1)$ are duplicates then assign $s_T(x)=-$ and $s_T(x+1)=+$. Furthermore, to maintain our convention of sign $0$ leaves, we add one child for both $\sigma* x$ and $\sigma*(x+1)$ with sign $0$. It is readily seen that $\LIN{T}{s_T}\cong L'\times\Z$ as required.

Now we find $(\widetilde{T},s_{\widetilde{T}})\approx_L(T,s_T)$ that is very close to being in $\ATST$. In view of Proposition \ref{LequivgivesLINiso} we will have $\LIN{\widetilde{T}}{s_{\widetilde{T}}}\cong L'\times\Z$.

Let $h:=\hgt{T}$. If $h=2$ then $\hgt{T'}=1$, i.e., $L'=\mathbf{n}$ for some $n\in\omega$ so that $L\cong\mathbf{n}\times\Z$. In this case let $(\widetilde{T},s_{\widetilde{T}}):=(T,s_T)$.

On the other hand if $h>2$ then, for $2\leq i\leq h-1$, let $S_i:=\{\sigma\in T\mid\hgt{\widehat{T}_\sigma}=i\}$ and $t_i:=|S_i|$. Order each $S_i$ using lexicographic ordering on its vertices thought of as finite subsets of $\omega$. Let $S:=\{(i,j)\mid2\leq i\leq h-1,\ 1\leq j\leq t_i\}\cup\{(1,1)\}$ be equipped with lexicographic order $\lex$ on the pairs. Set $t_1:=1$. For $(i,j)(\neq(1,1))\in S$ we denote the corresponding vertex of $T$ by $\sigma_{i,j}$. For each $(i,j)\in S$ we construct $T^{i,j}\in 3\ST_\omega$ and for each $(i,j)\lex(i',j')$ in $S$ we describe a height and sign preserving embedding $f^{i',j'}_{i,j}:T^{i,j}\to T^{i',j'}$. For brevity we denote by $f^{i,j}$ the map $f^{i,j}_{1,1}$ for each $(i,j)\in S$.
\begin{enumerate}
    \item Set $T^{1,1}:=T$ and $f^{1,1}$ to be the identity map.
    \item Suppose $T^{i,j}$ is constructed and the immediate successor $(i',j')$ of $(i,j)$ in $S$ exists. Set $T^{i',j'}:=\mathsf{EXUDE}((T^{i,j},s_{T^{i,j}});f^{i,j}(\sigma_{i',j'}))$ and $f^{i',j'}_{i,j}:T^{i,j}\to T^{i',j'}$ to be the canonical inclusion. For each $(i'',j'')\lex(i,j)$ in $S$ set $f^{i',j'}_{i'',j''}:=f^{i',j'}_{i,j}\circ f^{i,j}_{i'',j''}$.
    \item Finally set $(\widetilde{T},s_{\widetilde{T}}):=(T^{h-1,t_{h-1}},s_{T^{h-1,t_{h-1}}})$.
\end{enumerate}
Clearly $(\widetilde{T},s_{\widetilde{T}})\approx_L(T,s_T)$ because the former is obtained by a sequence of $\mathsf{EXUDE}$ routines.

For brevity let $T^i:=T^{i,t_i}$ for $1\leq i<h$ and $f_i^{i'}:=f_{i,t_i}^{i',t_{i'}}$ for $1\leq i<i'<h$.

We show the following for $T^i$ using induction for each $1\leq i\leq h-1$:
\begin{enumerate}
    \item[$(A_i)$] if $\sigma\in T^i$ and $1<\hgt{\widehat{T^i_\sigma}}\leq i$ then $\widehat{T^i_\sigma}\in\ATST$;
    \item[$(B_i)$] if $\sigma\in T^i$ and $\hgt{\widehat{T^i_\sigma}}=i+1$ then $w(T^i;\sigma)$ is even and $s_{T^i}(\sigma*x)=(-1)^{x+1}$ for each $x<w(T^i;\sigma)$.
\end{enumerate}

From the construction of $T$ it is clear that $(B_1)$ holds and $(A_1)$ holds vacuously.

Assume for induction that, for some $1\leq i<h-1$, the statements $(A_i)$ and $(B_i)$ hold.

To see that $(A_{i+1})$ holds suppose $\sigma\in T^{i+1}$ and $1<\hgt{\widehat{T^{i+1}_\sigma}}\leq i+1$. There are three possibilities.
\begin{itemize}
    \item If $\hgt{\widehat{T^{i+1}_\sigma}}\leq i$ and $\sigma=f_i^{i+1}(\sigma')$ for some $\sigma'\in T^i$ then $\widehat{T^{i+1}_\sigma}$ is isomorphic to $\widehat{T^i_{\sigma'}}$. Since $(A_i)$ gives that the latter is in $\ATST$ we see that the former is also in $\ATST$.
    \item If $\hgt{\widehat{T^{i+1}_\sigma}}\leq i$ and $\sigma\notin\mathrm{Im}(f_i^{i+1})$ then the construction of $\mathsf{EXUDE}$ operation gives some $\sigma'\in T^i$ such that $\widehat{T^{i+1}_\sigma}$ is isomorphic to $\widehat{T^i_{\sigma'}}$. Thus the conclusion follows as in the above item.
    \item If $\hgt{\widehat{T^{i+1}_\sigma}}=i+1$ and $\sigma=f_i^{i+1}(\sigma')$ for some $\sigma'\in T^i$ then $(B_i)$ guarantees that $w(T^i;\sigma')$ is even and that $s_{T^i}(\sigma'*x)=(-1)^{x+1}$ for each $x<w(T^i;\sigma')$. Since there is $1\leq k<t_{i+1}$ such that $T^{i+1,k+1}=\mathsf{EXUDE}((T^{i+1,k},s_{T^{i+1,k}});f^{i+1,k}_{i,t_i}(\sigma'))$, we get that $\widehat{T^{i+1}_\sigma}\in\ATST$.
\end{itemize}

Now we show that $(B_{i+1})$ holds. The map $f^{i+1}$ restricts to a bijection between $S_{i+2}$ and the set $\{\sigma\in T^{i+1}\mid\hgt{\widehat{T^{i+1}_\sigma}}=i+2\}$. Choose an element $\sigma$ from the latter set and $x<w(T^{i+1};\sigma)$. Let $j:=\hgt{\widehat{T^{i+1}_{\sigma*x}}}$ and $\delta:=s_{T^{i+1}}(\sigma*x)$. Since at no step in the construction of $\widetilde T$, the $\mathsf{EXUDE}$ operation is applied at a vertex $\sigma''\in T^{m,n}$ with $\hgt{\widehat{T^{m,n}_{\sigma''}}}=1$, we see that $1\leq j\leq i+1$.

If $\sigma*x=f^{i+1}(\sigma_{j,k})$ for some $(j,k)\in S$ then one of the following happens.
\begin{itemize}
    \item[$(\delta=+)$] If $f^{j,k}(\sigma_{j,k})\in T^{j,k}$ is of the form $\sigma'*y$ then the $\mathsf{EXUDE}$ construction and $(B_{j-1})$ together ensure that $\sigma'*(y-1)$ exists and $s_{T^{j,k}}(\sigma'*(y-1))=-$. Since for each $(j,k)\lex(i',k')\leq_{\mathrm{lex}}(i+1,t_{i+1})$, the vertices $f^{i',k'}_{j,k}(\sigma'*(y-1))$ and $f^{i',k'}_{j,k}(\sigma'*y)$ are immediate siblings and their relative position is also unaltered, we conclude that $\sigma*(x-1)\in T^{i+1}$ and $s_{T^{i+1}}(\sigma*(x-1))=-$.
    \item[$(\delta=-)$] As above we can argue that $\sigma*(x+1)\in T^{i+1}$ and $s_{T^{i+1}}(\sigma*(x+1))=+$.
\end{itemize}

If $\sigma*x\notin\mathrm{Im}(f^{i+1})$ then let $(j,k)\in S$ be the minimum such that $\sigma*x=f^{i+1,t_{i+1}}_{j,k}(\sigma'')$ for some $\sigma''$. Since $(1,1)\lex(j,k)$, the immediate predecessor $(j',k')$ of $(j,k)$ in $S$ exists. Since $T^{j,k}=\mathsf{EXUDE}((T^{j',k'},s_{T^{j',k'}});f^{j',k'}(\sigma_{j,k}))$ and $\pi(\sigma*x)\in\mathrm{Im}(f^{i+1})$, we conclude that $\sigma''$ and $f^{j,k}(\sigma^{j,k})$ are immediate siblings.

Suppose $\sigma''=\pi(\sigma'')*z$. If $\delta=+$ then it follows using $(B_{j-1})$ that $s_{T^{j,k}}(f^{j,k}(\sigma^{j,k}))=-$ and $f^{j,k}(\sigma^{j,k})=\pi(\sigma'')*(z-1)$. Similarly if $\delta=-$ then $s_{T^{j,k}}(f^{j,k}(\sigma^{j,k}))=+$ and $f^{j,k}(\sigma^{j,k})=\pi(\sigma'')*(z+1)$.

This completes the proof of $(B_{i+1})$.

Let $(T_1,s_{T_1}),(T_2,s_{T_2})\in3\ST_\omega$ be defined by $T_1=T_2:=\{\emptyset,0,00\}$ and where $s_{T_1}(0)=+$ and $s_{T_2}(0)=-$. Then define $(\overline T,s_{\overline T}):=(T_1,s_{T_1})\curlyvee(T^{h-1},s_{T^{h-1}})\curlyvee(T_2,s_{T_2})$. The statement $(B_{h-1})$ ensures $(\overline T,s_{\overline T})\in\ATST$ while Proposition \ref{LequivgivesLINiso} and Remark \ref{Leqcong} ensure that $\LIN{\overline T}{s_{\overline T}}\cong \omega+L'\times(\omega^*+\omega)+\omega^*\cong L$.
\end{proof}

\section{Euclidean division with finitely presented linear orders}\label{edfplo}
In this section we gather some tools to prove the converse of Proposition \ref{LequivgivesLINiso} regarding the notion of the width of finitely presented linear orders defined below.
\begin{definition}
For $L\in\LOfp$ define the \emph{width} of $L$ (denoted $\wid(L)$) to be the minimum value of $w(T;\emptyset)$ where $(T,s_T)\in3\ST_\omega$ and $\LIN{T}{s_T}\cong L$.
\end{definition}
Given $L\in\LOfp$, $L$ is irreducible if and only $\wid(L)=1$. Throughout the rest of this paper we say that $L\in\LOfp$ is an $\omega$-sum (resp. $\omega^*$-sum) if $L\cong\omega\times L'$ (resp. $L\cong\omega^*\times L'$) for some $L'$.

\begin{lemma}\emph{(Irreducible affix lemma)}\label{affixirred}
Let $n>1$ and $L_i\in\LOfp$ such that $\wid(L_i)=1$ for $1\leq i\leq n$, and $L_1+L_2+\hdots+L_n$ is an $\omega$-sum then for each $1\leq i<n$ $\rk{L_n}>\rk{L_i}$, $L_i+L_{i+1}+\hdots+L_n$ is an $\omega$-sum, and hence $\wid{(L_i+L_{i+1}+\hdots+L_n)}=1$.
\end{lemma}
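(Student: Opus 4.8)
The plan is to establish the rank inequality $\rk{L_n}>\rk{L_i}$ by differentiating $L:=L_1+\cdots+L_n$ exactly $\rk L-1$ times and exploiting that the top derivative of an $\omega$-sum is $\omega$, and then to deduce that each suffix $L_i+\cdots+L_n$ is an $\omega$-sum by a direct regrouping of copies; the width statement will then be immediate.

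Set $r:=\rk L$ and $r_i:=\rk{L_i}$ (so $r_i\le r$ since each $L_i\preceq L$). First I would record that, since $L\cong\omega\times L'$ is a nonempty $\omega$-sum, it is irreducible of rank $r\ge 1$, so Corollary \ref{rk-1 derivative of irr} gives $L^{(r-1)}\cong\omega$; likewise each $L_i$ is irreducible, and Corollary \ref{rk-1 derivative of irr} gives $L_i^{(r-1)}\cong\omega^{\delta_i}$ (infinite) when $r_i=r$, while $L_i^{(r-1)}$ is finite when $r_i<r$. The key structural input is Remark \ref{sumderivative}, iterated: taking the $(r-1)$-th derivative of a finite sum respects prefixes and suffixes, so each $L_j^{(r-1)}$ occupies a convex block of $L^{(r-1)}\cong\omega$, the blocks appearing in order and overlapping in at most one identified point. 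Since $L_n^{(r-1)}$ is then a nonempty suffix of $\omega$, it is $\cong\omega$; hence $r_n=r$ and $\delta_n=+$, the sign $-$ being impossible because $\omega^*\not\preceq\omega$.

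To finish the rank inequality I would argue by contradiction: if $r_j=r$ for some $j<n$, then $L_j^{(r-1)}\cong\omega$ is an infinite prefix of the suffix $(L_j+\cdots+L_n)^{(r-1)}$, which is itself a nonempty suffix of $\omega$ and so $\cong\omega$. But an infinite initial segment of $\omega$ exhausts it, leaving no room for the infinite block $(L_{j+1}+\cdots+L_n)^{(r-1)}$ lying beyond it—a contradiction. Thus $r_j<r=r_n$ for every $j<n$.

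For the $\omega$-sum claim I would fix $i$ and view $M_i:=L_i+\cdots+L_n$ as a nonempty suffix of $L=\omega\times L'=\sum_{k\in\omega}C_k$ with $C_k\cong L'$. The complementary prefix cannot meet infinitely many copies (otherwise, being downward closed, it would swallow every copy and $M_i$ would be empty), so it lies in finitely many initial copies and $M_i\cong S+\omega\times L'$ for a suffix $S$ of $L'$. Writing $L'\cong Q+S$ and regrouping
\[
M_i\cong S+(Q+S)+(Q+S)+\cdots\cong(S+Q)+(S+Q)+\cdots\cong\omega\times(S+Q)
\]
exhibits $M_i$ as an $\omega$-sum; since $M_i\in\LOfp$ is then irreducible, $\wid(M_i)=1$. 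The main obstacle I anticipate is the bookkeeping in the second paragraph: rigorously propagating the block decomposition of $L^{(r-1)}$ through the $r-1$ successive applications of Remark \ref{sumderivative}, and in particular checking that the permitted one-point endpoint identifications can never reconcile an infinite interior block $\cong\omega$ with nonempty material lying to its right inside a copy of $\omega$.
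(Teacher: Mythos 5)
Your proposal is correct, but its first half takes a genuinely different route from the paper's. The paper never differentiates: from an isomorphism $f:L_1+\cdots+L_n\to\omega\times L$ it takes the smallest $m\geq 1$ such that $f(L_1+\cdots+L_{n-1})$ is a prefix of $\mathbf{m}\times L$, observes that a tail copy of $\omega\times L$ then sits inside $L_n$ (so $\rk{L_i}\leq\rk{\mathbf{m}\times L}<\rk{\omega\times L}=\rk{L_n}$), and, writing $\mathbf{m}\times L=f(L_1+\cdots+L_{n-1}+\bar L)$ for a prefix $\bar L$ of $L_n$, gets every suffix at once from the single cyclic identity $\omega\times(\mathbf{m}\times L)\cong f(L_1+\cdots+L_{i-1})+\omega\times f(L_i+\cdots+L_{n-1}+\bar L+L_1+\cdots+L_{i-1})$. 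Your derivative argument for the rank inequality is sound and even gives slightly more (the sign: $L_n$ is itself an $\omega$-sum, the $i=n$ case excluded from the statement), but it hinges on exactly the bookkeeping you flag: Remark \ref{sumderivative} literally covers only one derivative of a two-term sum, so you must prove the folklore extension that for a convex prefix or suffix $C$ of $M$ the relations $\sim_k$ computed in $M$ and in $C$ agree on $C$ for every $k$, hence $C^{(k)}$ embeds as a convex block of $M^{(k)}$, with adjacent blocks sharing at most one class. Granted that routine induction, your contradiction does close: an infinite convex subset of $\omega$ is a final segment, and two infinite final segments of $\omega$ cannot meet in at most one point, so an infinite block for some $L_j$ with $j<n$ leaves no room for the nonempty (hence infinite) suffix block coming from $L_n$. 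Your second half is the paper's proof in different clothing--the minimal-$m$ decomposition $M_i\cong S+\omega\times(Q+S)\cong\omega\times(S+Q)$ is precisely the paper's regrouping--and the concluding width claim is legitimate since $S,Q\in\LOfp$ by Proposition \ref{summandfp}. In sum, the paper's argument is shorter and stays entirely inside the $\LOfp$ toolkit, extracting both conclusions from one computation; yours buys a clearer structural picture (rank concentrates in the last summand, visible in $L^{(r-1)}\cong\omega$ via Corollary \ref{rk-1 derivative of irr}) at the cost of one extra lemma on iterated derivatives that the paper does not supply.
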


\begin{proof}
Let $f:(L_1+L_2+\hdots+L_n)\to\omega\times L$ be an isomorphism. Since $f(L_1+L_2+\hdots+L_{n-1})$ is a proper prefix of $\omega\times L$, there exists a smallest $m\geq 1$ such that it is a prefix of $\mathbf{m}\times L$. Clearly $\omega\times L$ is isomorphic to a suffix of $L_n$. Hence $$\rk{L_1+L_2+\hdots+L_{n-1}}\leq\rk{\mathbf{m}\times L}<\rk{\omega\times L}=\rk{L_1+L_2+\hdots+L_n}=\rk{L_n}.$$

Fix $1\leq i<n$. Let $\mathbf{m}\times L=f(L_1+L_2+\hdots+L_{n-1}+\bar L)$ for some prefix $\bar L$ of $L_n$. Then
\begin{align*}
f(L_1+L_2+\hdots+L_n)&\cong\omega\times(\mathbf{m}\times L)\\&=\omega\times f(L_1+L_2+\hdots+L_{n-1}+\bar L)\\&\cong f(L_1+\hdots+L_{i-1})+\omega\times f(L_i+\hdots L_{n-1}+\bar L+L_1+\hdots+L_{i-1}).
\end{align*}

Cancelling $f(L_1+\hdots+L_{i-1})$ from both sides and applying $f^{-1}$, we obtain the result.
\end{proof}

The next result states that any suffix of an $\omega$-sum is so too.
\begin{proposition}\label{omegasumaffix}
Let $L,L'\in\LOfp$. If $L'=L+\tilde L$ for some non-empty $\tilde L$ and $L'$ is an $\omega$-sum then there exists $\bar L\in\LOfp$ such that $L'\cong\omega\times(L+\bar L)$ and $\tilde L\cong\omega\times(\bar L+L)$.
\end{proposition}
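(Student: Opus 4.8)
The plan is to exploit the $\omega$-sum structure of $L'$ directly and read off $\bar L$ from where the prefix $L$ falls among the copies. Since $L'$ is an $\omega$-sum, I fix a linear order $M$ and an isomorphism $f\colon L+\tilde L\to\omega\times M=\sum_{n\in\omega}M_n$, where each $M_n\cong M$. Because $\tilde L$ is non-empty, I choose a point $q\in\tilde L$; its image $f(q)$ lies in a single copy $M_k$ with $k$ finite, and since every element of $L$ precedes every element of $\tilde L$ we get $f(L)\subseteq M_0+\cdots+M_k$. Hence there is a least $m\geq1$ with $f(L)\subseteq\mathbf m\times M=M_0+\cdots+M_{m-1}$; as $f(L)$ is a prefix of this order I write $\mathbf m\times M=f(L)+R$, so that $L+R\cong\mathbf m\times M$ and, taking complements inside $\omega\times M$, also $\tilde L\cong R+\omega\times M$.

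Next I set $\bar L:=R+\mathbf{(m-1)}\times M$ and verify the required properties. For membership in $\LOfp$, note that $\mathbf m\times M$ is a prefix of $L'=\omega\times M\in\LOfp$, so Proposition \ref{summandfp} makes $\mathbf m\times M$ and each of its summands---in particular $R$ and $\mathbf{(m-1)}\times M$---finitely presented; thus $\bar L\in\LOfp$. This is precisely the point at which I avoid having to argue that $M$ itself is finitely presented: every finite piece I use is a summand of the genuinely finitely presented order $L'$. For the first isomorphism, $L+\bar L\cong(L+R)+\mathbf{(m-1)}\times M\cong\mathbf m\times M+\mathbf{(m-1)}\times M\cong\mathbf{(2m-1)}\times M$, whence $\omega\times(L+\bar L)\cong\omega\times(\mathbf{(2m-1)}\times M)\cong\omega\times M\cong L'$, using the identity $\omega^\delta\times L\cong\omega^\delta\times(\mathbf m\times L)$ recorded before the $m\mbox{-}\mathsf{REPL}$ construction.

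The second isomorphism is the crux. Writing $A:=\bar L+L=R+\mathbf{(m-1)}\times M+L$, I peel the leading copy of $R$ off the $\omega$-sum $\omega\times A$ and regroup the remaining summands one block to the right:
\begin{align*}
\omega\times A&\cong R+\omega\times\big(\mathbf{(m-1)}\times M+L+R\big)\\
&\cong R+\omega\times\big(\mathbf{(m-1)}\times M+\mathbf m\times M\big)\\
&\cong R+\omega\times\big(\mathbf{(2m-1)}\times M\big)\cong R+\omega\times M\cong\tilde L,
\end{align*}
where the second line uses $L+R\cong\mathbf m\times M$ and the last step uses $\tilde L\cong R+\omega\times M$ from the first paragraph. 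The main obstacle is twofold: first, the order-theoretic fact that a proper prefix of an $\omega$-sum is confined to finitely many copies, which is where the non-emptiness of $\tilde L$ and the well-ordering of $\omega$ enter, exactly as in the proof of Lemma \ref{affixirred}; and second, the bookkeeping in the regrouping above, since one must check that peeling the front copy of $R$ leaves precisely the block $\mathbf{(m-1)}\times M+L+R$ repeating and that this block collapses to $\mathbf{(2m-1)}\times M$ via $L+R\cong\mathbf m\times M$.
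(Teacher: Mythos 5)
Your proof is correct, and its skeleton matches the paper's: confine the prefix $L$ to finitely many copies of $M$, take a complement there, and finish with the two identities $\omega\times X\cong\omega\times(\mathbf{k}\times X)$ and $\omega\times(X+Y)\cong X+\omega\times(Y+X)$. The genuine difference is the endgame. The paper defines $\bar L$ as the complement of $L$ inside $\mathbf{m}\times M$, writes the chain $L+\tilde L=L'\cong\omega\times(L+\bar L)\cong L+\omega\times(\bar L+L)$, and then \emph{cancels} the prefix $L$; since left cancellation is invalid for arbitrary linear orders (e.g.\ $\omega^*+\mathbf{1}\cong\omega^*+\mathbf{0}$ while $\mathbf{1}\ncong\mathbf{0}$), that step is legitimate only because each isomorphism in the chain fixes the prefix $L$ setwise, which the paper must point out. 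You avoid cancellation entirely: you identify $\tilde L\cong R+\omega\times M$ by plain set-level complementation of $f(L)$, and then verify both required isomorphisms by forward computation. This buys a proof with no delicate step, and it also makes explicit two things the paper leaves implicit: why $L$ is confined to finitely many copies (your point $q\in\tilde L$, using that $\omega$ is well-ordered), and why $\bar L\in\LOfp$ (via Proposition \ref{summandfp}). The paper's version, in exchange, is shorter and produces the two isomorphisms from a single chain. Two small remarks: your padding of $\bar L$ by $\mathbf{(m-1)}\times M$ is harmless but unnecessary, since the same forward computation already works with $\bar L:=R$, namely $\omega\times(R+L)\cong R+\omega\times(L+R)\cong R+\omega\times(\mathbf{m}\times M)\cong R+\omega\times M\cong\tilde L$; and your appeals to Proposition \ref{summandfp} silently assume the relevant summands are non-empty, though the degenerate cases are covered by $\mathbf{0}\in\LOfp$.
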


\begin{proof}
Suppose $L'\cong\omega\times\tilde L'$ for a prefix $\tilde L'$ of $L'$. Since $L$ is a proper prefix of $L'$, there exists an $m\geq1$ such that $L$ is a proper prefix of $\mathbf{m}\times\tilde L'$. Then $\mathbf{m}\times\tilde L'\cong L+\bar L$ for some finitely presented $\bar L$. Thus $$L+\tilde L=L'\cong\omega\times\tilde L'\cong\omega\times(\mathbf{m}\times\tilde L')\cong\omega\times(L+\bar L)\cong L+\omega\times(\bar L+L).$$ Since each isomorphism in the above line preserves the prefix $L$, the result follows by cancelling a copy of $L$ from both sides.
\end{proof}

Using the above result we can patch two irreducible $\omega$-sums.
\begin{corollary}\label{patchinglemma}
Suppose $L_1,L_2,L'\in\LOfp$ are irreducible such that $L_1$ is a proper prefix of $L'$ and $L'$ is a proper prefix of $L_1+L_2$. If $L_1$ is an $\omega$-sum then $L'$ is also an $\omega$-sum. Moreover $\wid(L_1+L_2)=1$.
\end{corollary}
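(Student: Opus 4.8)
The plan is to peel off the two prefix relations, settle the sign of $L'$ using Hausdorff derivatives, and only then assemble $L_1+L_2$ as an $\omega$-sum. First I would fix the decompositions coming from the hypotheses: write $L'=L_1+A$ and $L_1+L_2=L'+B=L_1+A+B$ with $A,B$ non-empty. Since the initial segments of a linear order are totally ordered by inclusion, the copy of $L_1$ opening $L'$ and the first summand $L_1$ of $L_1+L_2$ are comparable; as every proper prefix of the $\omega$-sum $L_1=\omega\times M_1$ has rank strictly below $\rk{L_1}$ (Remark \ref{sumderivative}), $L_1$ is not isomorphic to a proper prefix of itself, so the two copies coincide and hence $L_2\cong A+B$ with $A$ a non-empty prefix of $L_2$.

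For the first assertion I would determine the sign of $L'$ via Corollary \ref{rk-1 derivative of irr}. As $L'$ is infinite and irreducible, $L'\cong\omega^\delta\times N$ with $r:=\rk{L'}\ge1$ and $L'^{(r-1)}\cong\omega^\delta$. Iterating Remark \ref{sumderivative}, $L_1^{(r-1)}$ is a prefix of $L'^{(r-1)}\cong\omega^\delta$. Now $L_1^{(r-1)}$ is non-empty, and it is either $\cong\omega$ (when $\rk{L_1}=r$, by Corollary \ref{rk-1 derivative of irr}) or finite (when $\rk{L_1}<r$). Since the only prefixes of $\omega^*$ are $\mathbf 0$ and $\omega^*$, neither a finite non-empty order nor $\omega$ is a prefix of $\omega^*$; this forces $\delta=+$, i.e. $L'$ is an $\omega$-sum.

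Next I would show $L_2$ is an $\omega$-sum. Applying Proposition \ref{omegasumaffix} to $L'=L_1+A$ gives $A\cong\omega\times(\bar L+L_1)$, so $A$ is an $\omega$-sum of rank $r$; being a non-empty prefix of $L_2$, the derivative argument above (now with $A^{(r-1)}\cong\omega$ a prefix of $L_2^{(\rk{L_2}-1)}$) again forces $L_2$ to be an $\omega$-sum. Moreover Proposition \ref{****}, applied to the prefix $A$ of the $\omega$-sum $L_2$, shows $\rk{L_2}>\rk{A}=r$ (equality would force $A\cong L_2$, contradicting that $B$ is non-empty). Together with $\rk{L_1}<r$ this gives the strict chain $\rk{L_1}<r<\rk{L_2}$, consistent with the necessary condition of Lemma \ref{affixirred}.

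Finally I would assemble $L_1+L_2$ as an $\omega$-sum, which I expect to be the main obstacle. Writing $L_2=\omega\times Q$ and $B=\omega\times B_0$ (the latter from Proposition \ref{omegasumaffix} applied to $L_2=A+B$), we have $L_1+L_2=L'+B$ with $L'$ and $B$ both $\omega$-sums and $\rk{L'}<\rk{B}$; the difficulty is that a sum of two $\omega$-sums of increasing ranks need \emph{not} be an $\omega$-sum (e.g. $\omega+\omega\times\omega^*$), so the hypothesis that the intermediate $\omega$-sum $L'$ has the \emph{infinite} $\omega$-sum $L_1$ as a prefix must genuinely be used. Concretely, Proposition \ref{omegasumaffix} presents $L'\cong\omega\times(L_1+\bar L)$ and $A\cong\omega\times(\bar L+L_1)$, and a block-shift comparison of these two periodic orders yields $L'\preceq A$; since $A$ is a prefix of $L_2$ this gives $L_1+L_2=L'+B\preceq A+B=L_2$, and as $L_2\preceq L_1+L_2$ we obtain $L_1+L_2\sim L_2$, whence $L_1+L_2$ is indecomposable by Remark \ref{indecinvequim} and Proposition \ref{indec}. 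To upgrade this to $\wid(L_1+L_2)=1$ I would then exhibit $L_1+L_2$ explicitly as an $\omega$-sum, matching it against $\omega\times(N+B_0)$ (where $L'=\omega\times N$) by Proposition \ref{****} after the EXUDE-type rearrangements of \S\ref{leq3st}, the inequality $\rk{N}<\rk{B_0}$ ensuring the low-rank front $N$ is absorbed. This explicit isomorphism—rather than the mere equimorphism $L_1+L_2\sim L_2$—is the crux, and is exactly where the structure forced by $L'$ is indispensable.
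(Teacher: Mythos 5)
Your handling of the first assertion is correct and takes a genuinely different route from the paper: you settle the sign of $L'$ (and then of $L_2$) by iterating Remark \ref{sumderivative} and comparing $(r-1)$-st derivatives against $\omega^{\delta}$ via Corollary \ref{rk-1 derivative of irr}, using that the only prefixes of $\omega^*$ are $\mathbf 0$ and $\omega^*$, whereas the paper simply invokes the dual of the irreducible affix lemma (Lemma \ref{affixirred}). Your explicit identification of the two copies of $L_1$ is also a point the paper glosses over. (Minor quibble: in the step ``equality would force $A\cong L_2$, contradicting that $B$ is non-empty'' you need one more line, namely that an $\omega$-sum is never isomorphic to a proper prefix of itself.)

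The genuine gap is in the second assertion, $\wid(L_1+L_2)=1$, which you yourself flag as the crux. Your block-shift argument gives $L'\preceq A$, hence $L_1+L_2\sim L_2$ and indecomposability; but width $1$ means \emph{irreducible}, i.e.\ literally of the form $\omega\times(\mbox{-})$, an isomorphism-level property that equimorphism cannot deliver, as you acknowledge. The completion you then sketch does not close this: Proposition \ref{****} compares two orders \emph{both already exhibited} as $\omega$-sums, so ``matching $L_1+L_2$ against $\omega\times(N+B_0)$ by Proposition \ref{****}'' presupposes exactly what is to be proved; and the mechanism you name --- the inequality $\rk{N}<\rk{B_0}$ ``ensuring the low-rank front $N$ is absorbed'' --- is refuted by your own counterexample $\omega+\omega\times\omega^*$, where the rank inequality holds yet the sum is not an $\omega$-sum. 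What actually forces absorption is the periodicity of $L'$ beginning with $L_1$, and the missing computation is short given what you already have. From $A\cong\omega\times(\bar L+L_1)\cong\bar L+\omega\times(L_1+\bar L)=\bar L+L'$ and $L_2\cong\omega\times(A+\bar A)$ (Proposition \ref{omegasumaffix} applied to the prefix $A$ of $L_2$), the cyclic identity $\omega\times(X_1+\cdots+X_n)\cong X_1+\omega\times(X_2+\cdots+X_n+X_1)$ gives
\begin{align*}
L_1+L_2&\cong L_1+\omega\times(\bar L+L'+\bar A)\cong L_1+\bar L+L'+\omega\times(\bar A+\bar L+L')\\
&\cong L'+\omega\times(\bar A+\bar L+L')\cong\omega\times(L'+\bar A+\bar L),
\end{align*}
where the third isomorphism uses $L_1+\bar L+\omega\times(L_1+\bar L)\cong\omega\times(L_1+\bar L)=L'$. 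This explicit chain --- essentially the paper's own proof --- is the step your proposal describes but never performs.
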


\begin{proof}
Let $L'=L_1+L'_2$ and $L_2=L'_2+L''_2$, where $L'_2$ and $L''_2$ are non-empty. Since $L'$ is irreducible, it is either an $\omega$-sum or an $\omega^*$-sum. Since $L_1$ is a prefix of of $L'$ and $L_1$ is an $\omega$-sum, the dual of the irreducible affix lemma (Lemma \ref{affixirred}) gives that $L'$ is also an $\omega$-sum. The same lemma also gives that $L'_2$ is an $\omega$-sum, which gives that $L_2$ is an $\omega$-sum.

Applying Proposition \ref{omegasumaffix} to the prefix embedding of $L'_2$ in $L_2$ gives that $L_2\cong\omega\times(L'_2+\bar L'_2)$ for some $\bar L'_2\in\LOfp$. The same proposition applied to the prefix embedding of $L_1$ in $L'$ gives that $L'\cong\omega\times(L_1+\bar L_1)$ and $L'_2\cong\omega\times(\bar L_1+L_1)$ for some $\bar L_1\in\LOfp$. Then
\begin{align*}
L_1+L_2&\cong L_1+\omega\times(L'_2+\bar L'_2)\\&\cong L_1+\omega\times(\omega\times(\bar L_1+L_1)+\bar L'_2)\\&\cong L_1+\omega\times(\bar L_1+\omega\times(L_1+\bar L_1)+\bar L'_2)\\&\cong L_1+\bar L_1+\omega\times(L_1+\bar L_1)+\omega\times(\bar L'_2+\bar L_1+\omega\times(L_1+\bar L_1))\\&\cong\omega\times(L_1+\bar L_1)+\omega\times(\bar L'_2+\bar L_1+\omega\times(L_1+\bar L_1))\\&\cong\omega\times(\omega\times(L_1+\bar L_1)+\bar L'_2+\bar L_1),
\end{align*}
which gives that $\wid(L_1+L_2)=1$.
\end{proof}

Now we explore the width of finite sums of finitely presented linear orders using that of the sum of consecutive pairs.

\begin{rmk}\label{widthofpairs}
Let $L\in\LOfp$. If $n:=\wid(L)>1$ and $L=L_1+L_2+\hdots+L_n$, where each $L_i$ is irreducible then $\wid(L_i+L_{i+1})=2$ for $1\leq i<n$.
\end{rmk}

In fact the converse of the above remark is also true.
\begin{theorem}\label{totalwidthfrompairs}
Let $n>1$ and $L_i\in\LOfp$ for $1\leq i\leq n$. If $\wid(L_i)=1$ for $1\leq i\leq n$, and $\wid(L_i+L_{i+1})=2$ for $1\leq i<n$ then $\wid(L_1+L_2+\hdots+L_n)=n$.
\end{theorem}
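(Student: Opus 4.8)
The plan is to establish the two inequalities $\wid(L_1+\cdots+L_n)\le n$ and $\wid(L_1+\cdots+L_n)\ge n$ separately. The upper bound is immediate: each $L_i$ is irreducible, hence $L_i\cong\LIN{T_i}{s_{T_i}}$ for some $3$-signed tree with $w(T_i;\emptyset)=1$, and the iterated join $(T_1,s_{T_1})\curlyvee\cdots\curlyvee(T_n,s_{T_n})$ is a $3$-signed tree whose root has $n$ children and whose linearization is $L_1+\cdots+L_n$. Thus only the reverse inequality carries content, and it is the exact converse of Remark \ref{widthofpairs}.

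For the lower bound I would argue by contradiction: suppose $k:=\wid(L_1+\cdots+L_n)<n$ and fix a minimal presentation $L_1+\cdots+L_n\cong M_1+\cdots+M_k$ with each $M_j$ irreducible. The crux is the claim that such a minimal presentation is a \emph{coarsening} of the given one, i.e.\ every cut separating consecutive $M_j$'s coincides with a cut separating consecutive $L_i$'s; granting this, each $M_j$ is a block $L_{a_j}+\cdots+L_{b_j}$ of consecutive summands. Since there are only $k<n$ blocks partitioning the $n$ summands, some block has length at least two, say $M_j\cong L_a+\cdots+L_b$ with $b>a$ and $M_j$ irreducible. As $M_j$ is an $\omega$-sum or an $\omega^*$-sum, the irreducible affix lemma (Lemma \ref{affixirred}) makes the two-element suffix $L_{b-1}+L_b$ an $\omega$-sum in the first case, while its dual makes the two-element prefix $L_a+L_{a+1}$ an $\omega^*$-sum in the second; either way a consecutive pair is irreducible, contradicting $\wid(L_i+L_{i+1})=2$.

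It therefore remains to prove the coarsening claim, which I would do by induction on $n$, comparing the leftmost pieces $L_1$ and $M_1$ (both prefixes of the common order, so one is a prefix of the other). If $L_1=M_1$ as initial segments I cancel them, observe that $L_2,\dots,L_n$ inherits the hypothesis, and recurse. If $M_1$ strictly overhangs the $L$-cuts, then, writing $M_1$ as an $\omega$-sum and using that suffixes of $\omega$-sums are again $\omega$-sums (Lemma \ref{affixirred} and Proposition \ref{omegasumaffix}), I can peel off the full initial summands sitting inside $M_1$ and extract an irreducible $L'$ with $L_b\subsetneq L'\subsetneq L_b+L_{b+1}$ for some $b$; this is exactly the input of the patching lemma (Corollary \ref{patchinglemma}), which after fixing signs yields $\wid(L_b+L_{b+1})=1$, contradicting the hypothesis and excluding the overhang. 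The symmetric possibility $M_1\subsetneq L_1$ is treated by comparing the rightmost pieces $L_n,M_k$ and dualizing.

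The main obstacle is precisely this sign bookkeeping in the overhang step, together with the two-sided overhang configuration $M_1\subsetneq L_1$ and $M_k\subsetneq L_n$. The patching lemma is one-sided (it needs the left summand to be an $\omega$-sum, its dual the right summand an $\omega^*$-sum), so before invoking it I must determine the signs of the relevant irreducibles; I would do this from Lemma \ref{affixirred}, Proposition \ref{omegasumaffix} and the fact that an infinite irreducible cannot be simultaneously an $\omega$-sum and an $\omega^*$-sum, which rules out the incompatible sign combinations and lets me apply Corollary \ref{patchinglemma} from whichever side is legal. For the two-sided overhang I expect to need the rigidity of Proposition \ref{****}, which pins down $\omega$-prefixes and $\omega^*$-suffixes up to isomorphism, to show that a minimal presentation cannot overhang at both ends simultaneously, thereby reducing every configuration to one already handled.
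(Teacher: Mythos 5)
Your reduction to the ``coarsening claim'' is where the argument breaks down: that claim is false, and the local contradiction you want in the overhang step does not exist. Concretely, take $L_1=\omega^*$, $L_2=\omega$, so that $\wid(L_1)=\wid(L_2)=1$ and $\wid(L_1+L_2)=2$. Then $M_1:=\omega^*+\mathbf{3}\cong\omega^*$ and its complementary suffix $M_2\cong\omega$ form a minimal presentation of $L_1+L_2$ whose cut lies strictly inside $L_2$, so the cuts of a minimal presentation need not coincide with the given ones. Moreover this is exactly the overhang configuration you propose to exclude: $M_1$ is irreducible with $L_1\subsetneq M_1\subsetneq L_1+L_2$, yet Corollary \ref{patchinglemma} needs the left summand $L_1$ to be an $\omega$-sum (it is an $\omega^*$-sum), and its dual needs the right summand $L_2$ to be an $\omega^*$-sum (it is an $\omega$-sum), so neither applies from ``whichever side is legal''---and no contradiction of any kind is available, since $\wid(L_1+L_2)=2$ genuinely holds here. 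Your sign bookkeeping cannot rule out this mixed combination: an $\omega^*$-sum followed by an $\omega$-sum, with the $M$-cut falling strictly inside, is realizable. Note also that in this example $k=n=2$, so there is nothing to contradict; but your overhang analysis never uses the assumption $k<n$, so if it were correct it would have to produce a contradiction here as well, and it cannot.

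What is missing is the idea that in the mixed-sign case the contradiction with $k<n$ is global, obtained by counting irreducible summands via the induction hypothesis, not by a local patching argument. This is what the paper does (Cases II(b) and II(c) of its proof): for instance, when $L_1$ and $M_1$ are both $\omega^*$-sums with $L_1\subsetneq M_1\subsetneq L_1+L_2$, it shows $L_2$ must be an $\omega$-sum, writes $L_2=\tilde L_2+\bar L_2$ with $M_1=L_1+\tilde L_2$, proves via Lemma \ref{affixirred} and Corollary \ref{patchinglemma} that $\bar L_2$ is an $\omega$-sum with $\wid(\bar L_2+L_3)=2$, so that the induction hypothesis gives $\wid(\bar L_2+L_3+\hdots+L_n)=n-1$; but this order equals $M_2+\hdots+M_k$, a decomposition into at most $k-1<n-1$ irreducibles---a contradiction. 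Nothing in your outline performs such a count, and Proposition \ref{****} will not substitute for it. The remainder of your plan is sound and matches the paper's maneuvers: the upper bound, the deduction of the theorem from block structure via Lemma \ref{affixirred}, the cancel-and-recurse step (the paper's ``Claim'' rules out matching cuts by applying the induction hypothesis to both sides of the cut), and the overhang cases in which some relevant summand is an $\omega$-sum. The gap is confined to, but fatal in, the mixed-sign overhang.
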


\begin{proof}
We use induction on $n$ to prove the result. 

The base case $n=2$ is immediate from the hypotheses. So let $n>2$ and assume that the result is true for any $k<n$.

Let $p:=\wid(L_1+L_2+\hdots+L_n)$. Then $p\leq n$. Moreover the irreducible affix lemma gives that $p>1$. 

Suppose $p<n$. Then $L_1+L_2+\hdots+L_n=L'_1+L'_2+\hdots+L'_p$ for some irreducible $L'_j\in\LOfp$.

\begin{claim*}
$L_1+L_2+\hdots+L_{n'}\neq L'_1+L'_2+\hdots+L'_{p'}$ for any $n'<n$ and $p'<p$.
\end{claim*}
\begin{proof}
If $L_1+L_2+\hdots+L_{n'}= L'_1+L'_2+\hdots+L'_{p'}$ for some $n'<n$ and $p'<p$ then $L_{n'+1}+L_{n'+2}+\hdots+L_n= L'_{p'+1}+L'_{p'+2}+\hdots+L'_p$. Hence the induction hypothesis gives that $n'=p'$ and $n-n'=p-p'$, a contradiction to $p<n$.
\end{proof}

The rest of the proof can be divided into the following two cases.

\noindent{\textbf{Case I:}} $L_1$ is an $\omega$-sum.
\begin{enumerate}
    \item [(a)] $L_1$ is a proper prefix of $L'_1$: The irreducible affix lemma gives that $L'_1$ is also an $\omega$-sum. The same lemma also gives that $L_i$ is an  $\omega$-sum, where $i$ is minimum such that $L'_1$ is a prefix of $L_1+\hdots+L_i$. Using an argument similar to the proof of Corollary \ref{patchinglemma} we get that $\wid(L_1+\hdots+L_i)=1$. The induction hypothesis gives that $i=n$ which is a contradiction to $p>1$.
    \item [(b)] $L'_1$ is a proper prefix of $L_1$: Let $j$ be the maximum such that $L'_1+\hdots+L'_j$ is a prefix of $L_1$. Then $L_1$ is a prefix of $L'_1+\hdots+L'_{j+1}$. Thus a suffix of $L_1$ is a prefix of $L'_{j+1}$ which gives that $L_1,L'_{j+1}$ are both $\omega$-sums by the irreducible affix lemma. Furthermore if $i$ is the smallest such that $L'_1+\hdots+L'_{j+1}$ is a prefix of $L_1+\hdots+L_i$ then a suffix of $L'_{j+1}$ is a prefix of $L_i$ which gives that $L_i$ is an $\omega$-sum. Using an argument similar to the proof of Corollary \ref{patchinglemma} we get $\wid(L_1+\hdots+L_i)=1$, a contradiction to $i>1$.
\end{enumerate}

\noindent{\textbf{Case II:}} $L_1$ is an $\omega^*$-sum.

Let $j\geq0$ be the largest such that $L'_1+\hdots+L'_j$ is a proper prefix of $L_1$. Then $L_1$ is a proper prefix of $L'_1+\hdots+L'_{j+1}$.
\begin{enumerate}
    \item [(a)] $j=0$, $L'_1$ is an $\omega$-sum: Let $i$ be the smallest such that $L'_1$ is a prefix of $L_1+\hdots+L_i$. Since $L'_1$ is an $\omega$-sum the proof of Case I(a) goes through to obtain a contradiction.
    \item [(b)] $j>0$, $L'_{j+1}$ is an $\omega$-sum: Let $L'_{j+1}=\tilde L'_{j+1}+\bar L'_{j+1}$, where $\tilde L'_{j+1}$ is a suffix of $L_1$. By the irreducible affix lemma $\bar L'_{j+1}$ is irreducible and an $\omega$-sum. Then $$L_2+\hdots+L_n=\bar L'_{j+1}+L'_{j+2}+\hdots+L'_p.$$ By the induction hypothesis the width of the LHS is $n-1$ but the RHS has at most $p-j\leq p-1<n-1$ irreducible summands, a contradiction.
    \item [(c)] $j=0$, $L'_1$ is an $\omega^*$-sum: Let $i$ be the smallest such that $L'_1$ is a prefix of $L_1+\hdots+L_i$.
    
    If $i>2$ then $L_1+L_2$ is a prefix of $L'_1$, and hence by the dual of Lemma \ref{affixirred} we get $\wid(L_1+L_2)=1$, a contradiction. Hence $i=2$. 
    
    If $L_2$ is an $\omega^*$-sum then the second paragraph of the proof of the dual of Corollary \ref{patchinglemma} gives $\wid(L_1+L_2)=1$, which is also a contradiction. Hence $L_2$ is an $\omega$-sum. 
    
    Let $L_2=\tilde L_2+\bar L_2$ be the partition such that $L'_1=L_1+\tilde L_2$. By Lemma \ref{affixirred}, $\bar L_2$ is an $\omega$-sum. Hence by the same lemma if $\wid(\bar L_2+L_3)=1$ then $L_3$ is an $\omega$-sum. In that case Corollary \ref{patchinglemma} gives $\wid(L_2+L_3)=1$, a contradiction. Thus $\wid(\bar L_2+L_3)=2$. Hence by the induction hypothesis we get $\wid(\bar L_2+L_3+\hdots+L_n)=n-1$. But $\bar L_2+L_3+\hdots+L_n=L'_2+\hdots+L'_p$, where the RHS has fewer than $n-1$ irreducible summands, a contradiction to the above statement.
    \item[(d)] $j>0$, $L'_{j+1}$ is an $\omega^*$-sum: An easy argument using the irreducible affix lemma gives that $j=1$. By the same lemma we also get that $L'_1$ is an $\omega^*$-sum. Then an argument similar to the proof of Corollary \ref{patchinglemma} gives that $\wid(L'_1+L'_2)=1$, a contradiction in view of Remark \ref{widthofpairs} applied to the order $L'_1+\hdots+L'_p$.
\end{enumerate}
\end{proof}

When restricted to linear orders of the same rank, the width of the sum increases but the growth could be really slow.
\begin{lemma}\label{**}
Let $L,L'\in\LOfp$. If $\rk{L}=\rk{L'}$ then $$\wid(L+L')>\min\{\wid(L),\wid(L')\}.$$
\end{lemma}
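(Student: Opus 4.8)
The plan is to prove the strict inequality $\wid(L+L') > \min\{\wid(L),\wid(L')\}$ by contradiction, leveraging the machinery developed just before this lemma, especially the irreducible affix lemma (Lemma \ref{affixirred}) and Theorem \ref{totalwidthfrompairs}. Without loss of generality I would assume $\wid(L) \leq \wid(L')$, set $m := \wid(L)$, and suppose toward a contradiction that $\wid(L+L') \leq m$. I would write $L+L' = M_1 + M_2 + \cdots + M_p$ with each $M_j \in \LOfp$ irreducible and $p = \wid(L+L') \leq m$. Simultaneously I would write $L = L_1 + \cdots + L_m$ with each $L_i$ irreducible (a minimal decomposition witnessing $\wid(L) = m$).

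The heart of the argument is to exploit the equality of Hausdorff ranks. Since $\rk{L} = \rk{L'}$, both $L$ and $L'$ have the \emph{same} top rank $r := \rk{L}$, so $L+L'$ also has rank $r$. In a minimal irreducible decomposition of a finitely presented order, at least one summand must attain the top rank (since rank of a finite sum is the max of the ranks of the summands). The key observation I would try to establish is that the summands of the minimal decomposition $M_1 + \cdots + M_p$ of $L+L'$ must, when restricted to the prefix equal to $L$ and the suffix equal to $L'$, each carry a top-rank summand. Concretely, the prefix $L$ already requires $m$ irreducible summands to express (by definition of width), and $L'$ requires at least $m$ as well; since both pieces "live inside" the length-$p$ decomposition of $L+L'$ and $p \leq m$, the decomposition of $L+L'$ would have to split at a point inside some $M_j$, forcing that $M_j = \widetilde{L}_j' + \widetilde{L}_j''$ with both parts finitely presented (Proposition \ref{summandfp}). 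The contradiction should come from an affix/patching argument: using Lemma \ref{affixirred} and the patching Corollary \ref{patchinglemma}, one shows that straddling the boundary between $L$ and $L'$ with a single irreducible $M_j$ would force $\wid$ of a sum of two same-rank irreducibles to be $1$, which together with Remark \ref{widthofpairs} and Theorem \ref{totalwidthfrompairs} collapses the count below $m$ on at least one side, contradicting minimality of the width of $L$ (or $L'$).

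More precisely, I would track where the boundary between $L$ and $L'$ falls relative to the decomposition $M_1 + \cdots + M_p$. Either the boundary coincides with a summand boundary $M_k \mid M_{k+1}$, in which case $L = M_1 + \cdots + M_k$ and $L' = M_{k+1} + \cdots + M_p$ give $\wid(L) \leq k$ and $\wid(L') \leq p - k$, so $m \leq k$ and $m \leq p-k$, forcing $p \geq 2m > m \geq p$, an immediate contradiction; or the boundary falls strictly inside some $M_k$, splitting it as $M_k = A + B$ with $A$ a suffix of $L$ and $B$ a prefix of $L'$, both finitely presented and irreducible-related to $M_k$. In the latter case I would apply the affix lemma to the $\omega$-sum or $\omega^*$-sum structure of $M_k$ to control the ranks of $A$ and $B$ and then use Theorem \ref{totalwidthfrompairs} on the two sub-decompositions to deduce $\wid(L) \leq k$ and $\wid(L') \leq p - k + 1$, again yielding $2m \leq p + 1 \leq m + 1$, hence $m \leq 1$. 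The residual case $m = 1$ (i.e.\ $L$ irreducible) I would handle separately and directly: if $L$ is irreducible of rank $r$ and $\wid(L+L') = 1$ then $L+L'$ is irreducible, and since $L$ is a proper prefix of the same rank, Proposition \ref{****} or the affix lemma applied to the same-rank situation yields a contradiction.

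The main obstacle I anticipate is the case analysis at the straddling boundary: controlling whether $M_k$ is an $\omega$-sum or $\omega^*$-sum and ensuring that the split $M_k = A + B$ does not secretly reduce the width of either $L$ or $L'$ below $m$ in a way that is consistent. The same-rank hypothesis is exactly what forbids the "absorption" phenomenon (where an irreducible of strictly larger rank could swallow a neighbour, as in the patching corollary), so the crux is to show that when $\rk{L} = \rk{L'}$ no such swallowing across the boundary can occur without violating minimality of one of the two widths. I expect the cleanest route is to phrase everything through Theorem \ref{totalwidthfrompairs}, reducing the global width count to the pairwise conditions $\wid(L_i + L_{i+1}) = 2$, and then to check that the boundary pair $\wid(A' + B') = 2$ survives, which is where the equal-rank assumption does the decisive work.
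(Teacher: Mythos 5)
Your overall contradiction strategy (tracking where the $L/L'$ boundary falls in a minimal irreducible decomposition $M_1+\cdots+M_p$ of $L+L'$) is workable, and your boundary-coincidence case is fine, but the straddling case --- which is the heart of the matter --- has a genuine gap. You claim that $M_k=A+B$ yields \emph{both} $\wid(L)\leq k$ and $\wid(L')\leq p-k+1$; that would require both $A$ and $B$ to be irreducible, and the tools you cite deliver only one of the two. If $M_k$ is an $\omega$-sum, Proposition \ref{omegasumaffix} makes the suffix $B$ an $\omega$-sum (hence irreducible) but says nothing about the prefix $A$; dually, if $M_k$ is an $\omega^*$-sum only $A$ is controlled. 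Prefixes of $\omega$-sums can have large width: $\omega+\omega^*+\omega$ is a prefix of the irreducible $\omega\times(\omega+\omega^*)$ and has width $3$ by Theorem \ref{totalwidthfrompairs}. Note also that Theorem \ref{totalwidthfrompairs} certifies widths from pairwise width-$2$ conditions; it never produces the upper bounds you need. So the inequality $2m\leq p+1$, and with it your reduction to $m\leq 1$, is unjustified --- and since this was precisely where the hypothesis $\rk{L}=\rk{L'}$ was supposed to act, its decisive use is missing. (A smaller issue: in your residual $m=1$ case, Proposition \ref{****} alone only gives $L\cong L+L'$, which is not by itself absurd; you need Lemma \ref{affixirred} there, as you half-suggest.)

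The straddling case can be closed, but by a rank contradiction rather than a width count, and this is exactly where equal ranks enters. Suppose $M_k$ is an $\omega$-sum. Then $B$ is irreducible, so $\wid(L')\leq p-k+1\leq m-k+1$; since $\wid(L')\geq m$ this forces $k=1$, i.e.\ $L=A$ is a prefix of $M_1$. Writing $L=L_1+\cdots+L_m$ with irreducible summands, the order $M_1=L_1+\cdots+L_m+B$ has width one and is an $\omega$-sum, so Lemma \ref{affixirred} gives $\rk{B}>\rk{L_i}$ for every $i$, hence $\rk{B}>\rk{L}=\rk{L'}$; but $B$ is a convex subset of $L'$, so $\rk{B}\leq\rk{L'}$, a contradiction. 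The $\omega^*$-case is dual (it forces $k=p$ and yields $\rk{A}>\rk{L}$, contradicting $A\subseteq L$). With this repair your proof is complete and genuinely different from the paper's, which argues directly rather than by contradiction: it concatenates the minimal decompositions $L_1+\cdots+L_k+L'_1+\cdots+L'_m$, observes that only the junction pair $L_k+L'_1$ can be absorbed into a single irreducible, shows that any absorbed block $L_p+\cdots+L_k+L'_1$ must (WLOG) be an $\omega$-sum so that all its summands from $L$ have rank below $\rk{L'_1}$, uses $\rk{L}=\rk{L'}$ to halt the absorption before it exhausts $L$, and then reads off $\wid(L+L')>\wid(L')$ from Theorem \ref{totalwidthfrompairs}.
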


\begin{proof}
Let $k:=\wid(L)$ and $m:=\wid(L')$. Without loss we may assume that $\rk{L}=\rk{L'}>0$ and $k,m\geq 1$.

Suppose $L=L_1+\hdots+L_k$ and $L'=L'_1+\hdots+L'_m$, where each $L_i$ and $L'_j$ is irreducible. Then Remark \ref{widthofpairs} gives $\wid(L_i+L_{i+1})=\wid(L'_j+L'_{j+1})=2$ for $1\leq i<k$ and $1\leq j<m$.

If $\wid(L_k+L'_1)=2$ then Theorem \ref{totalwidthfrompairs} gives $\wid(L+L')=k+m>\min\{k,m\}$. Moreover if $1\in\{k,m\}$ then the irreducible affix lemma gives that $\wid(L+L')>1$. Hence it remains to consider the case when $\wid(L_k+L'_1)=1$ and $k,m>1$. Without loss we may assume that $L_k+L'_1$ is an $\omega$-sum; the proof of the other case is dual.

Since $L_k+L'_1$ is an $\omega$-sum then $\rk{L_k}<\rk{L'_1}$ and $L'_1$ is also an $\omega$-sum by the irreducible affix lemma.

If $\wid((L_k+L'_1)+L'_2)=1$ then again by the irreducible affix lemma we conclude that $(L_k+L'_1)+L'_2$ is an $\omega$-sum and $\wid(L'_1+L'_2)=1$, a contradiction. Therefore $\wid((L_k+L'_1)+L'_2)=2$.

If $\wid(L_{k-1}+(L_k+L'_1))=1$ and $L_{k-1}+(L_k+L'_1)$ is an $\omega^*$-sum then $\wid(L_{k-1}+L_k)=1$ by the irreducible affix lemma, a contradiction. Hence if $\wid(L_{k-1}+(L_k+L'_1))=1$ then $L_{k-1}+(L_k+L'_1)$ is an $\omega$-sum.

Therefore under the hypothesis that $L_k+L'_1$ is an $\omega$-sum the above argument can be repeated to show that for $1\leq p\leq k$ if $\wid(L_p+L_{p+1}+\hdots+L_k+L'_1)=1$ then $\wid(L_p+L_{p+1}+\hdots+L_k+L'_1+L'_2)=2$ and $L_p+L_{p+1}+\hdots+L_k+L'_1$ is an $\omega$-sum.

Recall that if $L_p+L_{p+1}+\hdots+L_k+L'_1$ is an $\omega$-sum then $\rk{L_i}<\rk{L'_1}$ for $p\leq i\leq k$. Since $\rk{L}=\rk{L'}$ there is $1\leq i_0\leq k$ such that $\rk{L_{i_0}}\geq\rk{L'_1}$, and the condition $\wid(L_p+L_{p+1}+\hdots+L_k+L'_1)=1$ fails for some $p\geq i_0$. Thus Theorem \ref{totalwidthfrompairs} gives that $\wid(L+L')>\wid(L')\geq\min\{\wid(L),\wid(L')\}$.
\end{proof}

The next result generalizes Euclidean division lemma for integers.
\begin{lemma}\emph{(Euclidean division lemma)}\label{euclid}
Let $L,L'\in\LOfp$, $f:\omega\times L\to\omega\times L'$ an isomorphism such that $f(\mathbf{1}\times L)$ is a prefix of $\mathbf{1}\times L'$. Then there is $k\geq1$ and $L_1,L_2\in\LOfp$ such that $$L=L_1+L_2,\ L'=f(\mathbf{k}\times L+L_1),\ \omega\times(L_1+L_2)\cong\omega\times(L_2+L_1).$$ Moreover
\begin{itemize}
    \item if $(\rk{L_1}<\rk{L_2})$ then $\omega\times L_1$ is isomorphic to a prefix of $L_2$.
    \item if $(\rk{L_1}>\rk{L_2})$ then $\omega\times L_2$ is isomorphic to a prefix of $L_1$.
    \item if $(\rk{L_1}=\rk{L_2})$ then $\omega\times L_1\cong\omega\times L_2$ and $$\min\{\wid(L_1),\wid(L_2)\}<\min\{\wid(L),\wid(L')\}.$$
\end{itemize}
\end{lemma}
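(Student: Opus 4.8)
The plan is to recover the decomposition of $L$ by pulling back the first copy of $L'$ through $f$, establish one exchange isomorphism, and then run the rank trichotomy against a single auxiliary order.

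\textbf{Setting up the decomposition.} Since $\mathbf 1\times L'$ is a prefix of $\omega\times L'$, its preimage $f^{-1}(\mathbf 1\times L')$ is a prefix of $\omega\times L$; and since $f(\mathbf 1\times L)$ is a prefix of $\mathbf 1\times L'$ by hypothesis, $\mathbf 1\times L$ is a prefix of $f^{-1}(\mathbf 1\times L')$. A prefix of $\omega\times L$ containing the first copy of $L$ has the form $\mathbf k\times L+L_1$ with $k\geq1$ and $L_1$ a (possibly empty, proper) prefix of $L$; writing $L=L_1+L_2$ produces a nonempty $L_2$ and the first two assertions $L=L_1+L_2$ and $L'\cong f(\mathbf k\times L+L_1)$. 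Proposition \ref{summandfp} keeps all pieces in $\LOfp$.

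\textbf{The exchange isomorphism.} Deleting the prefix $f^{-1}(\mathbf 1\times L')=\mathbf k\times L+L_1$ from $\omega\times L$ leaves the suffix $L_2+\omega\times L$, which $f$ carries isomorphically onto the suffix of $\omega\times L'$ obtained by deleting its first copy, namely $\omega\times L'$. Hence $L_2+\omega\times L\cong\omega\times L'\cong\omega\times L$. Combined with the shift identity $\omega\times(L_2+L_1)\cong L_2+\omega\times(L_1+L_2)=L_2+\omega\times L$, this gives
\[\omega\times(L_2+L_1)\cong L_2+\omega\times L\cong\omega\times L=\omega\times(L_1+L_2),\]
which is the required isomorphism. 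Writing $M:=\omega\times(L_1+L_2)$ for this common order, the two shift identities now read $M\cong L_1+M$ and $M\cong L_2+M$; iterating them exhibits $L_1$, $L_2$, $\omega\times L_1$ and $\omega\times L_2$ all as prefixes of $M$ (the infinite ones as the increasing union of the prefixes $\mathbf n\times L_i$).

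\textbf{The rank trichotomy.} Any two prefixes of $M$ are nested, so I compare the relevant pair. If $\rk{L_1}<\rk{L_2}$, compare the prefixes $\omega\times L_1$ and $L_2$: either $\omega\times L_1$ is a prefix of $L_2$ and we are done, or $L_2$ is a prefix of $\omega\times L_1$, in which case $\rk{L_1}<\rk{L_2}\leq\rk{\omega\times L_1}=\rk{L_1}+1$ forces $\rk{L_2}=\rk{L_1}+1$. But a prefix of $\omega\times L_1$ that is bounded above lies inside finitely many copies of $L_1$ and hence has rank at most $\rk{L_1}$; so $L_2$ is cofinal in $\omega\times L_1$ and therefore equal to it, again giving that $\omega\times L_1$ is isomorphic to a prefix of $L_2$. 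The case $\rk{L_1}>\rk{L_2}$ is symmetric with the indices interchanged. If $\rk{L_1}=\rk{L_2}$, then $\omega\times L_1$ and $\omega\times L_2$ are nested prefixes of $M$ of the common rank $\rk{L_1}+1$, so Proposition \ref{****} yields $\omega\times L_1\cong\omega\times L_2$.

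\textbf{The width bound and the main obstacle.} It remains, in the equal-rank case, to prove $\min\{\wid(L_1),\wid(L_2)\}<\min\{\wid(L),\wid(L')\}$; set $\mu:=\min\{\wid(L_1),\wid(L_2)\}$. Since $\rk{L_1}=\rk{L_2}$ and $L=L_1+L_2$, Lemma \ref{**} gives $\wid(L)>\mu$. For $L'$ I would use $\rk{L'}=\rk{\omega\times L'}-1=\rk{\omega\times L}-1=\rk{L_1}$, write $L'\cong\mathbf k\times L+L_1$ as a sum of two orders of this common rank, and apply Lemma \ref{**} together with the monotonicity $\wid(\mathbf k\times L)\geq\wid(L)$ (an easy induction on $k$ from Lemma \ref{**}) to obtain $\wid(L')>\min\{\wid(\mathbf k\times L),\wid(L_1)\}\geq\min\{\wid(L),\wid(L_1)\}\geq\mu$; hence $\min\{\wid(L),\wid(L')\}>\mu$. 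I expect the main obstacle to be exactly this bookkeeping for $\wid(L')$: Lemma \ref{**} only produces strict inequality \emph{above a minimum}, so one must track which summand realises the minimum and check that no summand's width collapses to $\mu$, while routing the degenerate possibilities ($L_1=\emptyset$, or $\rk{L}=0$) into the strict-rank cases or dispatching them directly.
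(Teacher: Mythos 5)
Your proposal is correct and takes essentially the same route as the paper: the same maximal-$k$ decomposition of $f^{-1}(\mathbf{1}\times L')$, the same exchange isomorphism $\omega\times(L_1+L_2)\cong\omega\times(L_2+L_1)$ (you obtain it by restricting $f$ to the complementary suffixes, while the paper writes a chain of isomorphisms and cancels the common prefix --- the same underlying idea), the same nested-prefix rank trichotomy using Proposition \ref{****}, and the same appeal to Lemma \ref{**} for the width inequality. Your final paragraph merely makes explicit the bookkeeping (including $\wid(\mathbf{k}\times L)\geq\wid(L)$) that the paper compresses into the single sentence ``The final conclusion follows from Lemma \ref{**}.''
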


\begin{proof}
Since $f(\mathbf{1}\times L)$ is a prefix of $\mathbf{1}\times L'$ there is a largest $k\geq 1$ such that $f(\mathbf{k}\times L)$ is a prefix of $L'$. If $L'=f(\mathbf{k}\times L)+\bar L_1$ then define $L_1:=f^{-1}(\bar L_1)$. Using maximality of $k$ we obtain that $L_1$ is a prefix of (the $(k+1)^{th}$ copy of) $L$. Let $L=L_1+L_2$. Then
\begin{align*}
    \mathbf{k}\times(L_1+L_2)+L_1+\omega\times(L_2+L_1)
    &\cong\omega\times(L_1+L_2)\\
    &\cong\omega\times(\mathbf{k}\times(L_1+L_2)+L_1)\\
    &\cong \mathbf{k}\times(L_1+L_2)+L_1+\omega\times(\mathbf{k}\times(L_1+L_2)+L_1)\\
    &\cong \mathbf{k}\times(L_1+L_2)+L_1+\omega\times(L_1+L_2).
\end{align*}
Since each isomorphism above preserves the first copy of $L'\cong \mathbf{k}\times (L_1+L_2)+L_1$, we can cancel it to obtain
\begin{equation}\label{magic}
    \omega\times(L_1+L_2)\cong\omega\times(L_2+L_1)
\end{equation}

Using $\omega\times(L_2+L_1)\cong L_2+\omega\times(L_1+L_2)$ repeatedly with Equation \eqref{magic} we get, for each $m\geq 1$,
$$\omega\times(L_1+L_2)\cong \mathbf{m}\times L_2+\omega\times(L_1+L_2).$$
Thus $\omega\times L_2$ is isomorphic to a prefix of $\omega\times(L_1+L_2)$. If $\rk{L_2}<\rk{L_1}$ then since $\omega\times L_2$ is a prefix of $\omega\times(L_1+L_2)$, it is a prefix of $\mathbf{p}\times(L_1+L_2)$ for some $p\geq1$. However if $L_1$ is a proper prefix of $\omega\times L_2$ then it is in fact a prefix of $\mathbf{q}\times L_2$ for some $q\geq1$, a contradiction to $\rk{L_2}<\rk{L_1}$. Hence $\omega\times L_2$ is a prefix of $L_1$.

If $\rk{L_1}<\rk{L_2}$ then swapping $L_1$ and $L_2$ in view of Equation \eqref{magic} in the above paragraph we can obtain that $\omega\times L_1$ is isomorphic to a prefix of $L_2$.

If $\rk{L_1}=\rk{L_2}$ then Proposition \ref{****} yields isomorphisms $$\omega\times L_1\cong\omega\times(L_1+L_2)\cong\omega\times L_2.$$
The final conclusion follows from Lemma \ref{**}.
\end{proof}

\section{Euclidean algorithm for $\LOfp$}\label{eallofp}
The main goal of this section is to establish Corollary \ref{main} which is the converse of Proposition \ref{LequivgivesLINiso}. We need some more tools for that.
\begin{proposition}\label{mismatch}
Suppose $(T,s_T),(T',s_{T'})\in3\ST_\omega$ and $\bar T:=\bigcurlyvee_{i=1}^n T_i, \bar T':=\bigcurlyvee_{j=1}^m T'_j$, where $\{T_i\mid 1\leq i\leq n\}=\{T'_j\mid 1\leq j\leq m\}=\{T,T'\}$. If one of the following sets of conditions holds:
\begin{itemize}
    \item[(I)] $T\curlyvee T'\approx_L T=T_1=T'_1$;
    \item[(II)] $T\curlyvee T'\approx_L T'$,
\end{itemize}
then $\bar T^+\approx_L(\bar T')^+$.
\end{proposition}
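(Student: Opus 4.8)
The plan is to argue entirely at the level of $\curlyvee$-words. By definition $\bar T$ and $\bar T'$ are joins of copies of the two fixed trees $T,T'$, each using both, and since $\mathrm{LIN}$ turns a join into a sum and a $(-)^+$ into $\omega\times(-)$, we have $\mathrm{LIN}(\bar T^+)=\omega\times(\text{sum of the letters})$, and similarly for $\bar T'$. Because $\approx_L$ is a congruence for $\curlyvee$ and for $(-)^+$ (Remark \ref{Leqcong}), it suffices to rewrite each of $\bar T^+,(\bar T')^+$ by a chain of $\approx_L$-moves into one common normal form. I would first isolate two auxiliary moves coming from the generators. Applying $m\text{-}\mathsf{REPL}$ at the sign-$+$ child of the root of $U^+$ replicates exactly the children of that vertex, producing $(U^{\curlyvee m})^+$; hence
$$(U^{\curlyvee m})^+\approx_L U^+\qquad(m\geq1).$$
Applying $\mathsf{EXUDE}$ at the same vertex $w(X;\emptyset)$ times exudes the whole leading block $X$ out to the front and cyclically rotates the remaining word by one full letter, giving the rotation-with-residue move
$$(X\curlyvee W)^+\approx_L X\curlyvee(W\curlyvee X)^+.$$

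Under (I) the hypothesis $T\curlyvee T'\approx_L T$ says that a $T'$ immediately following a $T$ may be deleted, while $T_1=T'_1=T$ says both words begin with $T$. A short induction on length (strip a trailing run of $T'$'s off a block ending in $T$, then recurse) then shows that any word beginning with $T$ is $\approx_L T^{\curlyvee k}$, where $k$ is its number of $T$'s; in particular $\bar T\approx_L T^{\curlyvee k}$ and $\bar T'\approx_L T^{\curlyvee k'}$ with $k,k'\geq1$. Passing to $(-)^+$ by congruence and using the $\mathsf{REPL}$ move, $\bar T^+\approx_L(T^{\curlyvee k})^+\approx_L T^+\approx_L(T^{\curlyvee k'})^+\approx_L(\bar T')^+$. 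No $\mathsf{EXUDE}$ is needed, and indeed the first-letter hypothesis is essential: a leading $T'$ (morally the wrong-sided summand of $\mathrm{LIN}(T)+\mathrm{LIN}(T')\cong\mathrm{LIN}(T)$) can never be absorbed, only pushed further out by $\mathsf{EXUDE}$.

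Under (II) the hypothesis $T\curlyvee T'\approx_L T'$ lets us delete a $T$ immediately preceding a $T'$. Deleting maximally brings any word containing both letters to the normal form $T'^{\curlyvee j}\curlyvee T^{\curlyvee s}$ with $j\geq1$, $s\geq0$ (no $T\curlyvee T'$ substring survives). If $s=0$ then $\bar T\approx_L T'^{\curlyvee j}$ and the $\mathsf{REPL}$ move gives $\bar T^+\approx_L(T')^+$. If $s>0$ I would apply the rotation-with-residue move once, with $X=T'$, to get $\bar T^+\approx_L T'\curlyvee(T'^{\curlyvee(j-1)}\curlyvee T^{\curlyvee s}\curlyvee T')^+$; the inner word now ends in $T'$, so its trailing $T$'s delete, leaving $T'\curlyvee(T'^{\curlyvee j})^+\approx_L T'\curlyvee(T')^+$ by the $\mathsf{REPL}$ move. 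Finally $T'\curlyvee(T')^+\approx_L(T')^+$, since $(T')^+\approx_L(T'\curlyvee T')^+\approx_L T'\curlyvee(T'\curlyvee T')^+\approx_L T'\curlyvee(T')^+$ using, in turn, $\mathsf{REPL}$, the rotation move, and $\mathsf{REPL}$ again. Hence $\bar T^+\approx_L(T')^+\approx_L(\bar T')^+$; here no constraint on first letters is required, because the rotation move supplies the missing cyclic symmetry.

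I expect the rotation-with-residue move to be the main obstacle. The subtlety is that a single $\mathsf{EXUDE}$ acts on one child of the sign-$+$ vertex—one child-subtree of a single letter, not a whole letter-block—and deposits a residue at the root; one must apply it exactly $w(X;\emptyset)$ times and verify, through the cyclic index map $g$ and the exuded index $p$ of the construction, that the accumulated residue is precisely the block $X$ and that the remaining children have been rotated by one whole letter. Checking that this residue can be reabsorbed under $(-)^+$, as in the identity $T'\curlyvee(T')^+\approx_L(T')^+$, is what makes the case $s>0$ of (II) close, and is the only place where the full force of $\mathsf{EXUDE}$—beyond plain absorption—is used.
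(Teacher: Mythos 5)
Your proposal is correct and takes essentially the same route as the paper's proof: in case (I) absorb every $T'$ into a preceding $T$ and collapse $T^{\curlyvee k}$ under $(-)^+$ via $\mathsf{REPL}$, and in case (II) reduce to the normal form $T'^{\curlyvee j}\curlyvee T^{\curlyvee s}$, rotate a $T'$ block to the front by iterated $\mathsf{EXUDE}$, absorb the trailing $T$'s, and reabsorb the exuded block. The only cosmetic difference is that the paper rotates the whole block $\bigcurlyvee_{i\in I}T'$ at once where you rotate a single $T'$, and your explicit derivations of the rotation-with-residue move $(X\curlyvee W)^+\approx_L X\curlyvee(W\curlyvee X)^+$ and the reabsorption identity $T'\curlyvee (T')^+\approx_L (T')^+$ simply spell out steps the paper leaves implicit.
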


\begin{proof}
If (I) holds and $\bar T:=\bigcurlyvee_{i=1}^n T_i$, where $\{T_i\mid 1\leq i\leq n\}=\{T,T'\}$, then it is enough to show that $\bar T^+\approx_L T^+$. Let $I:=\{i\mid1\leq i\leq n, T_i=T\}$. Since $1\in I$ we have $\overline{T}^+\approx_L(\bigcurlyvee_{i\in I} T)^+\approx_L T^+$.

If (II) holds and $\bar T:=\bigcurlyvee_{i=1}^n T_i$, where $\{T_i\mid 1\leq i\leq n\}=\{T,T'\}$, then it is enough to show that $\bar T^+\approx_L(T')^+$. Let $I:=\{i\mid1\leq i\leq n, T_i=T'\}, i_0:=\max I$ and $I':=\{i\mid i_0<i\leq n\}$. Then $\emptyset\neq I\subsetneq\{1,2,\hdots,n\}$ and
\begin{align*}
    \overline{T}^+&\approx_L(\bigcurlyvee_{i\in I} T'\curlyvee\bigcurlyvee_{i\in I'}T)^+\\&\approx_L\bigcurlyvee_{i\in I} T'\curlyvee(\bigcurlyvee_{i\in I'}T\curlyvee\bigcurlyvee_{i\in I} T')^+\\&\approx_L\bigcurlyvee_{i\in I} T'\curlyvee(\bigcurlyvee_{i\in I} T')^+\\&\approx_L T'^+.
\end{align*}
\end{proof}

There is yet another supporting result that is a necessary tool in the proof of the main theorem.
\begin{lemma} \label{***}
Suppose $(T,s_T)\in3\ST_\omega,w(T;\emptyset)=1$ and $s_T(0)=+$. If $(T,s_T)\approx_L(T_1,s_{T_1})\curlyvee(T_2,s_{T_2})$, where $\rk{\LIN{T_1}{s_{T_1}}}<\rk{\LIN{T}{s_T}}$, then there is $(T',s_{T'})\in3\ST_\omega$ such that $(T',s_{T'})\approx_L(T,s_{T})$, $(T_1,s_{T_1})$ is a prefix of $(T',s_{T'})$, and $w(T';\emptyset)=w(T_1;\emptyset)+1$.
\end{lemma}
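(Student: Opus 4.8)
The plan is to pass to the level of linear orders, use Proposition~\ref{omegasumaffix} to recognise the relevant suffix as an $\omega$-sum, and then realise the required re-presentation by a sequence of $\mathsf{EXUDE}$ moves. Write $\omega\times L:=\LIN{T}{s_T}$, which is legitimate since $w(T;\emptyset)=1$ and $s_T(0)=+$, and set $M_i:=\LIN{T_i}{s_{T_i}}$ for $i=1,2$. By Proposition~\ref{LequivgivesLINiso} the hypothesis yields $\omega\times L\cong M_1+M_2$ with $\rk{M_1}<\rk{\omega\times L}$. If $M_1=\mathbf 0$ then $T_1$ is trivial and $(T',s_{T'}):=(T,s_T)$ already satisfies all three demands, so assume $M_1\neq\mathbf 0$; the rank inequality then forces $M_2\neq\mathbf 0$ as well. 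Thus $M_1$ is a non-empty proper prefix of the $\omega$-sum $\omega\times L$, and Proposition~\ref{omegasumaffix} shows that $M_2$ is itself an $\omega$-sum.

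Because L-equivalence is a congruence (Remark~\ref{Leqcong}) and $(T_1,s_{T_1})\curlyvee(T_2,s_{T_2})\approx_L(T,s_T)$, it is enough to produce a tree $(T_0,s_{T_0})\approx_L(T_2,s_{T_2})$ with $w(T_0;\emptyset)=1$ and then set $(T',s_{T'}):=(T_1,s_{T_1})\curlyvee(T_0,s_{T_0})$. Indeed $(T_1,s_{T_1})$ is then a prefix of $(T',s_{T'})$ by definition of the join, $w(T';\emptyset)=w(T_1;\emptyset)+1$, and $(T',s_{T'})\approx_L(T_1,s_{T_1})\curlyvee(T_2,s_{T_2})\approx_L(T,s_T)$. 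So the task reduces to collapsing the width of a tree presenting the irreducible order $M_2$ down to $1$.

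I would carry out this collapse by induction on $b:=w(T_2;\emptyset)$. Write $M_2=B_1+\cdots+B_b$ with each $B_j$ irreducible. The case $b=1$ is vacuous. For $b>1$, since $M_2$ is an $\omega$-sum, Lemma~\ref{affixirred} shows that every suffix $B_j+\cdots+B_b$ is again an $\omega$-sum; in particular the rightmost pair $B_{b-1}+B_b$ is an $\omega$-sum, hence irreducible, with $\rk{B_{b-1}}<\rk{B_b}$. I would merge the two rightmost children of the root into a single child presenting $B_{b-1}+B_b$; the resulting tree again presents $M_2$, has width $b-1$, and still has all of its root-children irreducible, so the inductive hypothesis applies and finishes the collapse.

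The main obstacle is that a merge must be a genuine $\approx_L$ move, not merely an isomorphism of the associated orders: we cannot appeal to the still-unproven Corollary~\ref{main}. Following the proof of Corollary~\ref{patchinglemma}, I would use Proposition~\ref{omegasumaffix} to write $B_b\cong\omega\times(\bar B+B_{b-1})$ and $B_{b-1}+B_b\cong\omega\times(B_{b-1}+\bar B)$ for some $\bar B\in\LOfp$. Applying $\mathsf{EXUDE}$ at the root-child presenting $B_b$ (a sign $+$ vertex, since $B_b$ is an $\omega$-sum) to a width-$1$ presentation of $\omega\times(B_{b-1}+\bar B)$ yields precisely the two consecutive children presenting $B_{b-1}$ and $B_b$; running this $\mathsf{EXUDE}$ in reverse performs the merge. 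The delicate point, and where I expect the real work to lie, is that $\mathsf{EXUDE}$ is defined structurally, so before reversing it I must first bring the given subtree at $B_b$ into the canonical exuded form---one whose final grandchild block is itself L-equivalent to the subtree presenting $B_{b-1}$. Since $\rk{B_{b-1}}<\rk{B_b}$, I expect to secure this normalisation by an inner induction on $\rk{\LIN{T}{s_T}}$, using $\mathsf{REPL}$ together with the $\mathsf{EXUDE}$ moves furnished by that hypothesis, with the $\omega$-sum and $\omega^*$-sum cases handled by the evident duality.
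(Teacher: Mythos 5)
Your order-level analysis is correct as far as it goes: passing to $\omega\times L\cong M_1+M_2$, disposing of $M_1=\mathbf 0$, invoking Proposition~\ref{omegasumaffix} to see that $M_2$ is an $\omega$-sum, and reducing via Remark~\ref{Leqcong} to the claim that $(T_2,s_{T_2})$ is L-equivalent to a width-one tree are all sound steps, as is the use of Lemma~\ref{affixirred} to control the rightmost pair $B_{b-1}+B_b$. But the merge step is a genuine gap, and you have located it yourself without closing it: to contract $V_{b-1}\curlyvee V_b$ (the \emph{given} root-child subtrees of $T_2$) into a single child, you must relate these arbitrary trees to the specific trees produced by your $\mathsf{EXUDE}$ construction on a width-one presentation of $\omega\times(B_{b-1}+\bar B)$. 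The inference ``these trees present isomorphic linear orders, hence they are L-equivalent'' is precisely Corollary~\ref{main}, which is proved \emph{after} this lemma and \emph{using} it; nothing available at this point of the paper licenses it.

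The proposed repair---an inner induction on $\rk{\LIN{T}{s_T}}$---cannot work, because there is no rank decrease. Since $\rk{M_1}<n:=\rk{\omega\times L}$, we have $\rk{M_2}=n$, and Lemma~\ref{affixirred} forces $\rk{B_b}=\rk{M_2}=n$; so the subtree $V_b$ that you must bring into ``canonical exuded form'' presents an order of rank exactly $n$, the rank of $\LIN{T}{s_T}$ itself. The alignment you need is an instance of Theorem~\ref{affixembedthm} at the rank pair $(n,n)$, and the paper's proof of that theorem at rank pairs $(n,n')$ with $n'\geq n$ itself invokes Lemma~\ref{***} at rank $n$ (Case III(a)); so structuring your argument as a mutual induction is circular rather than merely delicate. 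This is why the paper's proof never passes to linear orders at all: the hypothesis $(T,s_T)\approx_L(T_1,s_{T_1})\curlyvee(T_2,s_{T_2})$ supplies a finite chain $T=T(0),T(1),\dots,T(N)=T_1\curlyvee T_2$ of basic moves, and this syntactic witness---which your first line discards by applying Proposition~\ref{LequivgivesLINiso}---is the only usable handle before the main theorem exists. The paper inducts on that chain, maintaining for each $i$ a companion tree $T'(i)\approx_L T(i)$ in a controlled normal form (the invariant with data $k_i$, $m_i$, $\bar T(i)$), with one case for each type of basic move and its inverse, and reads off the required prefix structure from $T'(N)$. Any correct proof must exploit the move sequence in some comparable way; reconstructing L-equivalences from isomorphisms of the associated orders is exactly what cannot yet be done.
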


\begin{proof}
Suppose $T=:T(0),T(1),\hdots,T(N):=(T_1,s_{T_1})\curlyvee(T_2,s_{T_2})$ is a sequence of 3STs such that, for $0\leq i<N$, $T(i)\approx_L T(i+1)$ using a basic L-equivalence or its inverse.

Let $w_i:=w(T(i);\emptyset)$. Without loss we may assume that $\max\{i\mid w_i=1\}=0$, for otherwise we may reset $0$ at such a maximum.

We inductively construct another sequence $T'(0),T'(1),\hdots,T'(N)$ such that for each $0\leq i\leq N$ we have $T'(i)\approx_L T(i)$. Let $w'_i:=w(T'(i);\emptyset)$.

Since $w_0=1$ and $s_{T(0)}(w_0-1)=+$ first observe that since $T(i)\approx_L T(0)$ we have $\hgt{T(i)_{w_i-1}}=\hgt{T(0)_0}>\hgt{T(i)_x}$ for all $1\leq i\leq N$ and $0\leq x<w_i-1$. Moreover  $s_{T(i)}(w_i-1)=+$. Similarly we can conclude that $s_{T'(i)}(w'_i-1)=+$ for $0\leq i\leq N$.

For each $i>0$ we write $\tilde T(i):=T(i)_0\curlyvee T(i)_1\curlyvee\hdots\curlyvee T(i)_{w_i-2}$ so that $T(i)=\tilde T(i)\curlyvee T(i)_{w_i-1}$. We decompose $T'(i)$ similarly.

We inductively construct $T'(i+1)$ using $T'(i)$ while ensuring the following inductive hypothesis.

\noindent{}(IH) For each $0\leq i\leq N$, there are $k_i>m_i\geq0$ and $\bar T(i)\in3\ST_\omega$ satisfying $$T'(i)=\tilde T(i)\curlyvee(\bigcurlyvee_{j=1}^{m_i} \widehat {T(i)}_{w_i-1})\curlyvee\Big(\widehat {T(i)}_{w_i-1}\curlyvee\bar T(i)\curlyvee\tilde T(i)\curlyvee\bigcurlyvee_{j=1}^{m_i} \widehat {T(i)}_{w_i-1}\Big)^+,$$ 
\begin{equation}\label{leq}
    \bigcurlyvee_{s=1}^{k_i}\widehat {T(i)}_{w_i-1}\approx_L\bar T(i)\curlyvee\tilde T(i)\curlyvee(\bigcurlyvee_{j=1}^{m_i} \widehat {T(i)}_{w_i-1})
\end{equation}

We clearly have
\begin{alignat*}{3}
    T(i)&=\tilde T(i)\curlyvee\widehat {T(i)}_{w_i-1}^+&\\
    &\approx_L\tilde T(i)\curlyvee\Big(\bigcurlyvee_{s=1}^{k_i+1}\widehat {T(i)}_{w_i-1}\Big)^+&[(k_i+1)\mbox{-}\mathsf{REPL}]\\
    &\approx_L\tilde T(i)\curlyvee(\bigcurlyvee_{j=1}^{m_i} \widehat {T(i)}_{w_i-1})\curlyvee\Big(\bigcurlyvee_{s=1}^{k_i+1}\widehat {T(i)}_{w_i-1}\Big)^+&[\mbox{iterated }\mathsf{EXUDE}]\\
    &\approx_L\tilde T(i)\curlyvee(\bigcurlyvee_{j=1}^{m_i} \widehat {T(i)}_{w_i-1})\curlyvee\Big(\widehat {T(i)}_{w_i-1}\curlyvee\bar T(i)\curlyvee\tilde T(i)\curlyvee\bigcurlyvee_{j=1}^{m_i} \widehat {T(i)}_{w_i-1}\Big)^+&[\mbox{Equation } (\ref{leq})]\\
    &=T'(i)&
\end{alignat*}

Suppose $\tilde T(N)=T_1\curlyvee T_3$ for some $T_3\in3\ST_\omega$. Then the 3ST required by the statement of the lemma can be chosen to be $$T':=T_1\curlyvee\Big(T_3\curlyvee\bigcurlyvee_{j=1}^{m_N+1}\widehat {T(N)}_{w_N-1}\curlyvee\bar T(N)\curlyvee T_1\Big)^+$$ and it is clear that $T'\approx_L T'(N)$.

For the base case choose $T'(0):=T(0)$ so that IH readily holds.

For the inductive case assume that for some $i<N$, $T'(i)$ has been constructed and we construct $T'(i+1)$ in various cases as follows.

\noindent{\textbf{Case I:}} Suppose $T(i+1)_{w_{i+1}-1}=T(i)_{w_i-1}$. Then $\tilde T(i)\approx_L\tilde T(i+1)$. Using this L-equivalence repeatedly and choosing $k_{i+1}:=k_i$, $m_{i+1}:=m_i$, and $\bar T(i+1):=\bar T(i)$ we can readily verify that $T'(i+1)\approx_L T'(i)$.

\noindent{\textbf{Case II:}} Suppose $\tilde T(i)=\tilde T(i+1)$ and $\widehat{T(i+1)}_{w_{i+1}-1}\approx_L \widehat{T(i)}_{w_i-1}$. Using this L-equivalence repeatedly and choosing $k_{i+1}:=k_i$, $m_{i+1}:=m_i$, and $\bar T(i+1):=\bar T(i)$ we can readily verify that $T'(i+1)\approx_L T'(i)$.

\noindent{\textbf{Case III:}} Suppose $T(i+1)=\mathsf{EXUDE}((T(i),s_{T(i)});w_i-1)$. Then there are 3STs $T_a,T_b$ with $w(T_a;\emptyset)=1$ such that 
$$T(i) = \tilde{T}(i) \curlyvee \Big(T_a\curlyvee T_b\Big)^+,\ T(i+1) = \tilde{T}(i) \curlyvee T_a \curlyvee \Big(T_b \curlyvee T_a\Big)^+.$$
For brevity, let $T_c:=\tilde T(i)$. Then we get a sequence of L-equivalences where the reasons for each step are written in square brackets at the end of the line.
\begin{alignat*}{3}
    T'(i) &= T_c\curlyvee\bigcurlyvee_{j=1}^{m_i} (T_a \curlyvee T_b)\curlyvee\Big((T_a \curlyvee T_b)\curlyvee\bar T(i)\curlyvee T_c\curlyvee\bigcurlyvee_{j=1}^{m_i} ((T_a \curlyvee T_b))\Big)^+ &[IH]\\
    &\approx_LT_c\curlyvee\bigcurlyvee_{j=1}^{m_i} (T_a \curlyvee T_b)\curlyvee\Big(\bigcurlyvee_{s=1}^{k_i+1} (T_a \curlyvee T_b)\Big)^+ &[\mbox{Equation } (\ref{leq})]\\
    &\approx_LT_c\curlyvee\bigcurlyvee_{j=1}^{m_i} (T_a \curlyvee T_b)\curlyvee\Big(\bigcurlyvee_{s=1}^{2(k_i+1)} (T_a \curlyvee T_b)\Big)^+ &[2\mbox{-}\mathsf{REPL}] \\
    &\approx_LT_c\curlyvee\bigcurlyvee_{j=1}^{m_i} (T_a \curlyvee T_b)\curlyvee T_a\curlyvee\Big(\bigcurlyvee_{s=1}^{2(k_i+1)} (T_b\curlyvee T_a)\Big)^+ &[\mathsf{EXUDE}] \\
    &\approx_LT_c\curlyvee T_a\curlyvee\bigcurlyvee_{j=1}^{m_i} (T_b \curlyvee T_a) \curlyvee\Big((T_b\curlyvee T_a)\curlyvee\bar T(i+1)\curlyvee T_c\curlyvee T_a\curlyvee\bigcurlyvee_{j=1}^{m_i} (T_b \curlyvee T_a)\Big)^+ &[\mbox{Equation } (\ref{leq})],
\end{alignat*}
where $\bar T(i+1):=\bigcurlyvee_{s=1}^{k_i}(T_b\curlyvee T_a)\curlyvee T_b\curlyvee \bar{T}(i)$, $m_{i+1}:=m_i$ and $k_{i+1}:=2k_i+1$.

\noindent{\textbf{Case IV:}} Suppose $T(i)=\mathsf{EXUDE}((T(i+1),s_{T(i+1)});w_{i+1}-1)$. Then there are 3STs $T_a,T_b,T_c$ with $w(T_b;\emptyset)=1$ such that $$T(i+1)=T_a\curlyvee(T_b\curlyvee T_c)^+,\ T(i)=T_a\curlyvee T_b\curlyvee\Big(T_c\curlyvee T_b\Big)^+.$$
Then
\begin{alignat*}{3}
    T'(i)&=T_a\curlyvee T_b\curlyvee\bigcurlyvee_{j=1}^{m_i} (T_c\curlyvee T_b)\curlyvee\Big((T_c\curlyvee T_b)\curlyvee\bar T(i)\curlyvee T_a\curlyvee T_b\curlyvee(\bigcurlyvee_{j=1}^{m_i} (T_c\curlyvee T_b))\Big)^+&[IH]\\
    &\approx_L T_a\curlyvee T_b\curlyvee\bigcurlyvee_{j=1}^{m_i} (T_c\curlyvee T_b)\curlyvee\Big(\bigcurlyvee_{s=1}^{k_i+1} (T_c\curlyvee T_b)\Big)^+&[\mbox{Equation } (\ref{leq})]\\
    &\approx_L T_a\curlyvee T_b\curlyvee\bigcurlyvee_{j=1}^{m_i} (T_c\curlyvee T_b)\curlyvee\Big(\bigcurlyvee_{s=1}^{2(k_i+1)} (T_c\curlyvee T_b)\Big)^+&[2\mbox{-}\mathsf{REPL}]\\
    &\approx_L T_a\curlyvee \bigcurlyvee_{j=1}^{m_i+1} (T_b\curlyvee T_c)\curlyvee\Big(\bigcurlyvee_{s=1}^{2(k_i+1)} (T_b\curlyvee T_c)\Big)^+&[\mbox{iterated }\mathsf{EXUDE}]\\
    &\approx_L T_a\curlyvee \bigcurlyvee_{j=1}^{m_i+1} (T_b\curlyvee T_c)\curlyvee\Big((T_b\curlyvee T_c)\curlyvee\bar T(i+1)\curlyvee T_a\curlyvee\bigcurlyvee_{j=1}^{m_i+1} (T_b\curlyvee T_c)\Big)^+&[\mbox{Equation } (\ref{leq})],\\
\end{alignat*}
where $\bar T(i+1):=\bigcurlyvee_{s=1}^{k_i} (T_b\curlyvee T_c)\curlyvee T_b\curlyvee\bar T(i)$, $m_{i+1}:=m_i+1$ and $k_{i+1}:=2k_i+1$.

\noindent{\textbf{Case V:}} Suppose $T(i+1)=n\mbox{-}\mathsf{REPL}((T(i),s_{T(i)});w_i-1)$ for some $n>1$. For brevity, let $T_a:=\tilde T(i),T_b:=\widehat {T(i)}_{w_i-1}$. Then $$T(i)=T_a\curlyvee T_b^+,\ T(i+1)=T_a\curlyvee\Big(\bigcurlyvee_{t=1}^n T_b\Big)^+.$$
Then
\begin{alignat*}{3}
    T'(i)&=T_a\curlyvee\bigcurlyvee_{j=1}^{m_i} T_b\curlyvee\Big(T_b\curlyvee\bar T(i)\curlyvee T_a\curlyvee\bigcurlyvee_{j=1}^{m_i}T_b\Big)^+&[IH]\\
    &\approx_LT_a\curlyvee\bigcurlyvee_{j=1}^{m_i}T_b\curlyvee\Big(\bigcurlyvee_{s=1}^{k_i+1}T_b\Big)^+&[\mbox{Equation } (\ref{leq})]\\
    &\approx_LT_a\curlyvee\bigcurlyvee_{j=1}^{m_i}T_b\curlyvee\Big(\bigcurlyvee_{s=1}^{pn(k_i+1)}T_b\Big)^+&[pn\mbox{-}\mathsf{REPL}]\\
    &\approx_LT_a\curlyvee\bigcurlyvee_{j=1}^{m_in}T_b\curlyvee\Big(\bigcurlyvee_{s=1}^{pn(k_i+1)}T_b\Big)^+&[\mbox{iterated }\mathsf{EXUDE}]\\
    &\approx_LT_a\curlyvee\bigcurlyvee_{j=1}^{m_in}T_b\curlyvee\Big(\bigcurlyvee_{t=1}^{n}T_b\curlyvee\bigcurlyvee_{s=1}^{pn(k_i+1)-m_i(n-1)-n-k_i} T_b\curlyvee\bar T(i)\curlyvee T_a\curlyvee\bigcurlyvee_{j=1}^{m_in}T_b\Big)^+&[\mbox{Equation } (\ref{leq})]\\
\end{alignat*}
The integer $p$ could be chosen so that $pn(k_i+1)-m_i(n-1)-n-k_i>0$. Hence choosing $m_{i+1}:=m_i n$, $k_{i+1}:=p(k_i+1)-1$, $\bar T(i+1):=\bigcurlyvee_{s=1}^{pn(k_i+1)-m_i(n-1)-n-k_i} T_b\curlyvee\bar T(i)$ does the job.

\noindent{\textbf{Case VI:}} Suppose $T(i) = n\mbox{-}\mathsf{REPL}((T(i+1),s_{T(i+1)});w_{i+1}-1)$ for some $n>1$. For brevity, let us take $T_a := \tilde T(i+1)$, $T_b := \widehat{T(i+1)}_{w_{i+1}-1}$. Then $$T(i) = T_a\curlyvee\Big(\bigcurlyvee_{t=1}^n T_b\Big)^+, T(i+1) = T_a\curlyvee T_b^+.$$ Then by the induction hypothesis we have
$$T'(i) = T_a\curlyvee\bigcurlyvee_{j=1}^{nm_i} T_b\curlyvee\Big(\bigcurlyvee_{t=1}^{n}T_b\curlyvee\bar T(i)\curlyvee T_a\curlyvee\bigcurlyvee_{j=1}^{nm_i}T_b\Big)^+.$$
In this case we choose $m_{i+1}:= nm_i$, $k_{i+1}:=n(k_i+1)-1$, and $\bar{T}(i+1):= \bigcurlyvee_{t=1}^{n-1}T_b \curlyvee\bar{T}(i)$.
\end{proof}

We first prove a more flexible version of the main goal.
\begin{theorem}\label{affixembedthm}
Suppose $(T,s_T),(T',s_{T'})\in3\ST_\omega$ and $f:\LIN{T}{s_T}\to\LIN{T'}{s_{T'}}$ is a prefix (resp. suffix) embedding. Then there are $(\tilde T,s_{\tilde T}),(\tilde T',s_{\tilde T'})\in3\ST_\omega$ such that $(T,s_T)\approx_L(\tilde T,s_{\tilde T})$, $(T',s_{T'})\approx_L(\tilde T',s_{\tilde T'})$, and $(\tilde T,s_{\tilde T})$ is a prefix (resp. suffix) of $(\tilde T',s_{\tilde T'})$ such that if $g:\LIN{T}{s_T}\to \LIN{\tilde T}{s_{\tilde T}}$ and $h:\LIN{\tilde T'}{s_{\tilde T'}}\to \LIN{T'}{s_{T'}}$ are the induced isomorphims (in the sense of Proposition \ref{LequivgivesLINiso}) then $h\mid_{\LIN{\tilde T}{s_{\tilde T}}}=fg^{-1}$.
\end{theorem}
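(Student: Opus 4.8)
The plan is to prove the prefix version by induction, the suffix version being its left--right dual. Write $L:=\LIN{T}{s_T}$ and $L':=\LIN{T'}{s_{T'}}$, so $f(L)$ is a prefix of $L'$ and, by Proposition \ref{summandfp}, $L'\cong f(L)+M$ with $M\in\LOfp$. I would run a lexicographic induction whose primary parameter is $\rk{L'}$ and whose secondary parameter is $\min\{\wid(L),\wid(L')\}$, with an innermost finite descent on the number of irreducible join--factors of $T$ and $T'$. Crucially, I would carry the coherence datum along at every step: each elementary move is a basic L-equivalence, which by Proposition \ref{LequivgivesLINiso} comes equipped with a definite isomorphism, and since $\approx_L$ is a congruence (Remark \ref{Leqcong}) these isomorphisms compose. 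Thus the requirement $h\mid_{\LIN{\tilde T}{s_{\tilde T}}}=fg^{-1}$ amounts to insisting that the triangle $L\xrightarrow{g}\LIN{\tilde T}{s_{\tilde T}}\hookrightarrow\LIN{\tilde T'}{s_{\tilde T'}}\xrightarrow{h}L'$ commutes with $f$, and this property is preserved under composition of moves provided each individual move is chosen to match $f$ on the relevant prefix.

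The base case $\rk{L'}=0$ is finite and is handled directly, a prefix embedding of finite orders being realized at the tree level using only $\mathsf{REPL}$ and trivial moves. For the inductive step I would first reduce to the \emph{irreducible} situation $\wid(L)=\wid(L')=1$. Decompose $T=A_1\curlyvee\cdots\curlyvee A_n$ and $T'=B_1\curlyvee\cdots\curlyvee B_m$ into their irreducible join--factors. The prefix $f(L)$ cuts $L'$ either exactly at a boundary between two $B_j$ or in the interior of some $B_j$; peeling the leftmost factor $A_1$ against the initial segment of $L'$ and recursing reduces the assertion to single--factor comparisons. The one genuinely new phenomenon is a \emph{spanning} factor---an irreducible summand of one side whose image meets several factors of the other---where consecutive factors fuse into a single irreducible (their join being L-equivalent to one of them); exactly this situation is reconciled by Proposition \ref{mismatch}, whose two hypotheses correspond to the two ways a join of irreducibles can collapse.

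For the irreducible core, write $L\cong\omega^{\delta}\times L_0$ and $L'\cong\omega^{\delta'}\times L'_0$ and focus on $\delta=\delta'=+$. If $\rk{L}=\rk{L'}$, then Proposition \ref{****} forces the prefix $f(L)$ to exhaust $L'$ up to isomorphism, so after replacing $L'_0$ by a finite multiple (which leaves $\omega\times L'_0$ unchanged) the hypotheses of the Euclidean division lemma are met. Lemma \ref{euclid} then returns $L_0=L_1+L_2$ with $\omega\times(L_1+L_2)\cong\omega\times(L_2+L_1)$ and, in the equal--rank case, the strict drop $\min\{\wid(L_1),\wid(L_2)\}<\min\{\wid(L),\wid(L')\}$; I would realize this split at the tree level through $\mathsf{EXUDE}$ and $\mathsf{REPL}$, invoking Lemma \ref{***} to realize the summand $L_1$ as a genuine prefix of a tree L-equivalent to $T$, and then recurse on the smaller secondary parameter. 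If instead $\rk{L}\neq\rk{L'}$, Lemma \ref{euclid} exhibits $\omega\times L_1$ (or $\omega\times L_2$) as a prefix of the complementary summand, which lowers $\rk{L'}$ and lets the primary induction act. The mixed--sign subcases $\delta\neq\delta'$ are ruled out or reduced by the indecomposability and (un)boundedness bookkeeping already developed in \S\ref{secBDFPLO}.

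The step I expect to be the main obstacle is enforcing the coherence clause \emph{simultaneously} with the width--reducing recursion. Every application of the Euclidean division lemma and of Lemma \ref{***} silently permutes the irreducible factors (the cyclic shifts buried inside $\mathsf{EXUDE}$), and the abstract isomorphisms furnished by Proposition \ref{LequivgivesLINiso} are only guaranteed to be \emph{some} isomorphisms; the work is to pin down the specific moves so that their composite restricts to precisely $fg^{-1}$ rather than to an arbitrary isomorphism of the same orders. Equivalently, the difficulty is to keep the ``where does $f(L)$ land in $L'$'' bookkeeping stable as the Euclidean steps repeatedly cut and re-glue the two presentations, so that the lexicographic descent on $(\rk{L'},\min\{\wid(L),\wid(L')\})$ never lets the matching drift away from $f$.
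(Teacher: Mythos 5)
Your overall strategy is genuinely the paper's: reduce to comparisons of irreducible join-factors, run the Euclidean division lemma (Lemma \ref{euclid}) with the width drop as the termination measure, reconcile the resulting decompositions via Proposition \ref{mismatch} and Lemma \ref{***}, and carry the isomorphism bookkeeping through every move. However, there is a concrete gap in your induction scheme, and it surfaces exactly in the case your sketch omits. In the irreducible core you treat only $\delta=\delta'=+$, where exhaustion of $L'$ by $f(L)$ does hold; but when both orders are $\omega^*$-sums --- a case that cannot be deferred to ``left--right duality,'' since dualizing turns the given prefix embedding into a suffix embedding --- the prefix $f(L)$ need not exhaust $L'$: already $\omega^*\cong\omega^*+\mathbf 1$ gives a non-surjective prefix self-embedding of an irreducible $\omega^*$-sum. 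The paper's proof handles this (its Case I(a)) by writing $L'=f(L)+L''$, noting $\rk{L''}<\rk{L}$ via Lemma \ref{**}, and applying the induction hypothesis to the \emph{suffix} embedding $L''\to L'$, so the prefix and suffix statements must be proved simultaneously inside one induction. Your parameter cannot absorb this step: the primary parameter $\rk{L'}$ is unchanged, the secondary parameter $\min\{\wid(L),\wid(L')\}$ is still $1$ (both orders are irreducible), and the innermost count of join-factors can even increase, since a tree presenting $L''$ may have many factors. The paper's parameter --- the pair $(\rk{L},\rk{L'})$ in lexicographic order --- does decrease here, because the first coordinate drops from $\rk{L}$ to $\rk{L''}$. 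Without this (or an equivalent) choice, your recursion does not terminate.

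Two misattributions are also worth flagging, since they would send you to the wrong tools. The ``spanning factor'' phenomenon (an irreducible prefix whose image meets several factors of the target) is not what Proposition \ref{mismatch} addresses: that proposition needs the hypothesis $T\curlyvee T'\approx_L T$ or $T\curlyvee T'\approx_L T'$, and the paper invokes it only at the very end of the equal-rank Euclidean algorithm, to show that the two original trees, each being a finite join of the same pair of terminal trees, are L-equivalent. Spanning is instead handled by the paper's Case III: take the least $k$ with $f(L)$ a prefix of $L'_0+\cdots+L'_k$ and peel one factor at a time, using Lemma \ref{***} to realize each peeled factor as a genuine tree-level prefix of an irreducible. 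Likewise, the mixed-sign subcases are settled by the irreducible affix lemma (Lemma \ref{affixirred}) from \S\ref{edfplo}, not by the bounded-discrete machinery of \S\ref{secBDFPLO}; and when $\rk{L}<\rk{L'}$ mixed signs are not ``ruled out'' at all --- $\omega^*$ is a prefix of $\omega\times\omega^*$ --- rather, the affix lemma forces $L'$ to be an $\omega$-sum and the induction proceeds on the strictly smaller pair of ranks.
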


\begin{proof}
Let $L:=\LIN{T}{s_T}, L':=\LIN{T'}{s_{T'}}$, $n:=\rk L, n':=\rk{L'}, w:=w(T;\emptyset)$ and $w':=w(T';\emptyset)$. Without loss we may assume that $f:L\to L'$ is a prefix embedding. Clearly $n\leq n'$.

We shall use transfinite induction on the pair $(n,n')$ to prove the result, where the set $\{(n,n')\in\N\times\N\mid n\leq n'\}$ of such pairs of ranks is arranged in the lexicographic ordering, denoted $\lex$. Also note that we implicitly keep track of all the isomorphisms while invoking the induction hypothesis. 

For the base case, we have $(n,n')=(0,0)$, and the conclusion is obvious irrespective of the values of $w,w'$.

\noindent{\textbf{Case I:}} $n=n'>0,\ w=w'=1$.

If $L'$ is an $\omega$-sum then $f(L)=L'$ for if $f(L)$ is a proper prefix of $L'$ then $n<n'$ by the irreducible affix lemma. Thus $L$ is also an $\omega$-sum. On the other hand if $L'$ is an $\omega^*$-sum then the irreducible affix lemma gives that $L$ is also an $\omega^*$-sum.

\begin{enumerate}
    \item [(a)] $f$ is not surjective: The above discussion gives that $L'$ is an $\omega^*$-sum. If $L'=f(L)+L''$ then Lemma \ref{**} gives that $\rk {L''}<n$. Propositions \ref{3STLOfpcorr} and \ref{summandfp} together yield $ (T'',s_{T''}) \in 3\ST_\omega$ such that $\LIN{T''}{s_{T''}}\cong L''$. 
    
    Applying the induction hypothesis to the suffix embedding $L''\to L'$ we get 3STs $\bar T',\bar T''$ such that $\bar T' \approx_L T'$, $\bar T'' \approx_L T''$ and $\bar T'=\tilde T'\curlyvee\bar T''$ for some 3ST $\tilde T'$. Then $\LIN{T}{s_T} \cong \LIN{T'}{s_{T'}}$ and we are in the next subcase.
    \item [(b)] $f$ is a bijection: Without loss we assume that both $L,L'$ are $\omega$-sums; the other argument is dual. 
    
    Let $L\cong\omega\times\tilde L$ and $L'\cong\omega\times\tilde L'$. Set $(T_{0,1},s_{T_{0,1}}):=(\widehat T_0,s_{\widehat T_0})$, $(T_{0,2},s_{T_{0,2}}):=(\widehat T'_0,s_{\widehat T'_0})$ and $\tilde L_{0,j}:=\LIN{T_{0,j}}{s_{T_{0,j}}}$ for $j=1,2$ so that $L_{0,1}=\tilde L$ and $L_{0,2}=\tilde L'$. Then repeatedly applying the Euclidean division lemma (Lemma \ref{euclid}) starting with $L_{0,1}$ and $L_{0,2}$ gives a positive integer $m$ and for each $1\leq i\leq m$ a permutation $\xi_i$ of $\{1,2\}$, a positive integer $k_i$, and $L_{i,1},L_{i,2}\in\LOfp$ such that
    \begin{itemize}
    \item $L_{i-1,\xi_i(1)}=L_{i,1}+L_{i,2}$ and $L_{i-1,\xi_i(2)}=f(\mathbf{k_i}\times(L_{i,1}+L_{i,2})+L_{i,1})$;
    \item $\rk{L_{i,1}}=\rk{L_{i,2}}$ for each $0\leq i<m$;
    \item $\rk{L_{m,1}}\neq\rk{L_{m,2}}$.
    \end{itemize}
    For each $1\leq i\leq m,1\leq j\leq 2$ since $\rk{L_{i,j}}<n = n'$, the induction hypothesis together with Propositions \ref{3STLOfpcorr} and \ref{summandfp} gives $(T_{i,j},s_{T_{i,j}})\in3\ST_\omega$ such that
\begin{itemize}
    \item $\LIN{T_{i,j}}{s_{T_{i,j}}}\cong L_{i,j}$;
    \item $T_{i-1,\xi_i(1)}\approx_L T_{i,1}\curlyvee T_{i,2}$ and $T_{i-1,\xi_i(2)}\approx_L\bigcurlyvee_{l=1}^{k_i}(T_{i,1}\curlyvee T_{i,2})\curlyvee T_{i,1}$.
\end{itemize}

If $\rk{L_{m,2}}<\rk{L_{m,1}}$ then the Euclidean division lemma gives that $\omega\times L_{m,2}$ is isomorphic to a prefix of $L_{m,1}$. Using the induction hypothesis for this prefix embedding we obtain $T'_{m,j}\approx_L T_{m,j}$ for $j=1,2$ such that $T'_{m,1}= (T'_{m,2})^+\curlyvee\bar T$ for some 3ST $\bar T$ so that $T'_{m,2}\curlyvee T'_{m,1}\approx_L T'_{m,1}$.

On the other hand if $\rk{L_{m,1}}<\rk{L_{m,2}}$ then the Euclidean division lemma gives that $\omega\times L_{m,2}$ is isomorphic to a prefix of $L_{m,1}$. Using the induction hypothesis for this prefix embedding we obtain $T'_{m,j}\approx_L T_{m,j}$ for $j=1,2$ such that $T'_{m,2}=(T'_{m,1})^+\curlyvee\bar T$ for some 3ST $\bar T$ so that $T'_{m,1}\curlyvee T'_{m,2}\approx_L T'_{m,2}$.

In either of the above two cases $T_{0,1},T_{0,2}$ are L-equivalent to finite joins of $T'_{m,1}$ and $T'_{m,2}$ so that the hypotheses of Proposition \ref{mismatch} are satisfied, which then yields $T=(\widehat T_0)^+=T_{0,1}^+\approx_LT_{0,2}^+=(\widehat T'_0)^+=T'$ as required.
\end{enumerate}
\noindent{\textbf{Case II:}} $0\leq n<n'$, $w=w'=1$.

Since $n<n'$, the irreducible affix lemma gives that $L'$ is an $\omega$-sum. Thus $L$ is a prefix of $\mathbf{k}\times\tilde L'$ for some $k>0$. Since $(\rk{L},\rk{\mathbf{k}\times L})\lex(n,n')$, the induction hypothesis applied to the prefix embedding $L\to(\mathbf{k}\times \tilde L)$ gives 3STs $\bar T,\bar T'$ such that $\bar T\approx_L T$ and $\bigcurlyvee_{i=1}^k\widehat{T'}_0\approx_L\bar T\curlyvee\bar T'$. Then
$$T'\approx_L(\widehat{ T'}_0)^+\approx_L(\bigcurlyvee_{i=1}^k\widehat{T'}_0)^+\approx_L(\bar T\curlyvee\bar T')^+\approx_L\bar T\curlyvee(\bar T'\curlyvee\bar T)^+,$$
as required.

\noindent{\textbf{Case III:}} $0\leq n\leq n'$, $w=1<w'$.

Let $L'_j:=\LIN{T'_j}{s_{T'_j}}$ for $0\leq j<w'$ and $k\geq 0$ the minimum such that $f(L)$ is a prefix of $L'_0+L'_1+\hdots+L'_k$. Then $L'_0+L'_1+\hdots+L'_{k-1}$ is a prefix of $f(L)$.
\begin{enumerate}
    \item [(a)] $s_T(0)=+$ : Using the irreducible affix lemma we get that $s_{T'}(k)=+$ and $\rk{L'_0+L'_1+\hdots+L'_{k-1}}<n$.
    
    If $k>0$ then let $T^{(0)}:=T$. Applying the induction hypothesis and Lemma \ref{***} to the prefix embedding of $f^{-1}(L'_0)\to L$ yields 3STs $T^{(1)},\bar{T'}_0$ such that $\bar{T'}_0\approx_L T'_0$, $\bar{T'}_0\curlyvee T^{(1)}\approx_L T^{(0)}$ and $\wid(T^{(1)})=1$. Repeating this procedure $k$ times we get $T^{(k)}$ with $\wid(T^{(k)})=1$ so that $\LIN{T^{(k)}}{s_{T^{(k)}}}$ is isomorphic to a prefix of $L'_k$.
    
    For all $k\geq 0$, if $\rk{L'_k}=n=\rk{\LIN{T^{(k)}}{s_{T^{(k)}}}}$ then using Case I, otherwise using Case II for this prefix embedding we get the required conclusion.
    
    \item [(b)] $s_T(0)=-$ : If $k=0$ then there are two possibilities. If $\rk{L'_0}=n$ then Case I, otherwise Case II for this prefix embedding gives the required conclusion.
    
    If $k>0$ then we have $L=f^{-1}(L_0)+\widetilde{L}$ for some $\widetilde{L}\in \LOfp$. Since $s_T(0)=-$, i.e.,. $L$ is an $\omega^*$-sum, we can apply the dual version of Lemma \ref{affixirred} to get $\rk{\widetilde{L}}<\rk{L'_0} = \rk{L} = n$. This also means that the argument of Case I can be applied to the embedding of $L'_0$ into $L$, so that without loss we have $T \approx_L T'_0\curlyvee \widetilde{T}$ for some 3ST $\widetilde{T}$ satisfying $\LIN{\widetilde{T}}{s_{\widetilde{T}}} \cong \widetilde{L}$. Then we are left to find two 3STs, namely $\overline T\approx_L\widetilde{T}$ and $\overline{\overline T}$, so that $\overline T\curlyvee\overline{\overline{T}}\approx_L T'_1 \curlyvee \ldots \curlyvee T'_k$, which can be achieved by the inductive hypothesis since we established above that $\rk{\widetilde{L}} < n$. Therefore we are done.
\end{enumerate}
\noindent{\textbf{Case IV:}} $0\leq n\leq n'$, $1<w$

Here the argument is completed by embedding $\LIN{T_i}{s_{T_i}}$ into the target linear order in the increasing order for each $0\leq i<w$ using Case III.

This completes the proof.
\end{proof}

Here is the promised converse to Proposition \ref{LequivgivesLINiso} that follows readily from the above theorem.
\begin{corollary}\label{main}
Suppose $(T,s_T),(T',s_{T'})\in3\ST_\omega$. If $f:\LIN{T}{s_T}\to\LIN{T'}{s_{T'}}$ is an isomorphism then $(T,s_T)\approx_L(T',s_{T'})$.
\end{corollary}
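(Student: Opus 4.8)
The plan is to derive Corollary \ref{main} directly from Theorem \ref{affixembedthm}. Given the isomorphism $f\colon\LIN{T}{s_T}\to\LIN{T'}{s_{T'}}$, first observe that $f$ is in particular a prefix embedding: its image is all of $\LIN{T'}{s_{T'}}$, which is trivially a prefix of itself. Feeding this prefix embedding into Theorem \ref{affixembedthm} produces $(\tilde T,s_{\tilde T}),(\tilde T',s_{\tilde T'})\in3\ST_\omega$ with $(T,s_T)\approx_L(\tilde T,s_{\tilde T})$ and $(T',s_{T'})\approx_L(\tilde T',s_{\tilde T'})$, such that $(\tilde T,s_{\tilde T})$ is a prefix of $(\tilde T',s_{\tilde T'})$ and the induced isomorphisms $g\colon\LIN{T}{s_T}\to\LIN{\tilde T}{s_{\tilde T}}$ and $h\colon\LIN{\tilde T'}{s_{\tilde T'}}\to\LIN{T'}{s_{T'}}$ satisfy $h\mid_{\LIN{\tilde T}{s_{\tilde T}}}=fg^{-1}$.

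The heart of the argument is to upgrade the prefix relation to an equality by exploiting bijectivity. Since $g$ arises from an L-equivalence it is a bijection (cf. Proposition \ref{LequivgivesLINiso}), so $fg^{-1}$ is a bijection of $\LIN{\tilde T}{s_{\tilde T}}$ onto $\LIN{T'}{s_{T'}}$. The identity $h\mid_{\LIN{\tilde T}{s_{\tilde T}}}=fg^{-1}$ then says that $h$, restricted to the prefix subset $\LIN{\tilde T}{s_{\tilde T}}\subseteq\LIN{\tilde T'}{s_{\tilde T'}}$, already surjects onto the whole of $\LIN{T'}{s_{T'}}$. As $h$ is itself a bijection onto $\LIN{T'}{s_{T'}}$, every element of $\LIN{\tilde T'}{s_{\tilde T'}}$ has its $h$-image hit already from the prefix; injectivity of $h$ then forces $\LIN{\tilde T}{s_{\tilde T}}=\LIN{\tilde T'}{s_{\tilde T'}}$ as subsets.

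It then remains to translate this back to trees. Writing $(\tilde T',s_{\tilde T'})=(\tilde T,s_{\tilde T})\curlyvee(T'',s_{T''})$ for the suffix $T''$ furnished by the prefix relation, the join corresponds to the sum $\LIN{\tilde T'}{s_{\tilde T'}}\cong\LIN{\tilde T}{s_{\tilde T}}+\LIN{T''}{s_{T''}}$; since the first summand already exhausts the whole order we obtain $\LIN{T''}{s_{T''}}=\mathbf 0$, hence $|T''|=1$ and $(\tilde T',s_{\tilde T'})=(\tilde T,s_{\tilde T})$. Transitivity of $\approx_L$ now yields $(T,s_T)\approx_L(\tilde T,s_{\tilde T})=(\tilde T',s_{\tilde T'})\approx_L(T',s_{T'})$, as desired.

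I expect the only genuinely delicate point to be the collapse of the prefix in the second paragraph, and it is precisely here that the set-level compatibility clause of Theorem \ref{affixembedthm} is indispensable. Merely knowing that $(\tilde T,s_{\tilde T})$ is a prefix of $(\tilde T',s_{\tilde T'})$ with $\LIN{\tilde T}{s_{\tilde T}}\cong\LIN{\tilde T'}{s_{\tilde T'}}$ would not suffice, since an infinite order can absorb a nonempty suffix up to isomorphism (for instance $\omega^*\cong\omega^*+\mathbf 1$); the condition $h\mid_{\LIN{\tilde T}{s_{\tilde T}}}=fg^{-1}$ rules this out by pinning down the actual underlying sets. Everything else is routine bookkeeping with the join/sum correspondence and transitivity of L-equivalence.
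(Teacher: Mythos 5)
Your proof is correct and is exactly the derivation the paper intends: the paper offers no written proof beyond asserting that Corollary \ref{main} ``follows readily'' from Theorem \ref{affixembedthm}, and your argument---viewing the isomorphism as a prefix embedding, using the compatibility clause $h\mid_{\LIN{\tilde T}{s_{\tilde T}}}=fg^{-1}$ together with injectivity of $h$ to force the prefix $\LIN{\tilde T}{s_{\tilde T}}$ to exhaust $\LIN{\tilde T'}{s_{\tilde T'}}$, and then collapsing the tree suffix to the single-root 3ST---supplies precisely the missing details. Your remark that the set-level clause is indispensable (since, e.g., $\omega^*\cong\omega^*+\mathbf 1$ shows a nonempty suffix can be absorbed up to abstract isomorphism) correctly identifies why Theorem \ref{affixembedthm} was stated with that clause rather than with a bare isomorphism $\LIN{\tilde T}{s_{\tilde T}}\cong\LIN{\tilde T'}{s_{\tilde T'}}$.
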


\printbibliography
\end{document}